\numberwithin{equation}{section}
\let\oldalign\align
\let\oldendalign\endalign
\begin{document}
%\pagewiselinenumbers
%\linenumbers
\theoremstyle{definition}
%% ----------------------------------------------------------------------
\newcommand{\nc}{\newcommand}
\def\PP#1#2#3{{\mathrm{Pres}}^{#1}_{#2}{#3}\setcounter{equation}{0}}
\def\mr#1{{{\mathrm{#1}}}\setcounter{equation}{0}}
\def\mc#1{{{\mathcal{#1}}}\setcounter{equation}{0}}
\def\mb#1{{{\mathbb{#1}}}\setcounter{equation}{0}}
\def\Mcc{\mc{C}}
\def\Mbe{\mb{E}}
\def\Mcp{\mc{P}}
\def\Mcg{\mc{G}}
\def\fbzh{\mc{C}(-,\mc{P}(\xi))\text{-exact}}
\def\extri{(\mc{C},\mb{E},\mathfrak{s})}
\def\Gproj{\xi\text{-}\mc{G}\text{projective}}
\def\Ginj{\xi\text{-}\mc{G}\text{injective}}
\def\GP{\mc{G}\mc{P}(\xi)}
\def\GI{\mc{G}\mc{I}(\xi)}
\def\P{\mc{P}(\xi)}
\def\I{\mc{I}(\xi)}
\def\Gpd{\xi\text{-}\mc{G}\text{pd}}
\def\Extri{\mb{E}\text{-triangle}}
\def\SGP{\mc{SGP}(\xi)}
\def\nSGP{\text{$n$}\text{-}\mc{SGP}(\xi)}
\def\mSGP{\text{$m$}\text{-}\mc{SGP}(\xi)}
\def\ext{\xi \text{xt}_{\xi}}
\def\extgp{\xi \text{xt}_{\mc{GP}(\xi)}}
\def\extgi{\xi \text{xt}_{\mc{GI}(\xi)}}
\def\gext{\mc{G}\xi\text{xt}_{\xi}}
\def\nGpd{n\text{-}\xi\text{-}\mc{SG}\text{pd}}
\def\ext{\xi \text{xt}_{\xi}} 
\def\QP{\mc{QP}(\xi)}
\def\QI{\mc{QI}(\xi)}
\def\Qgpd#1{\xi\text{-}\mc{QG}\text{pd}{\left(#1\right)}}
\def\Qgid#1{\xi\text{-}\mc{QG}\text{id}{\left(#1\right)}}
\def\QGP{\mc{QGP}(\xi)}
\def\QGI{\mc{QGI}(\xi)}
%%%%%%%%FROM latexexam.tex
%\theoremstyle{definition}
\newtheorem{defn}{\bf Definition}[section]
\newtheorem{cor}[defn]{\bf Corollary}   
\newtheorem{prop}[defn]{\bf Proposition}
\newtheorem{thm}[defn]{\bf Theorem}
\newtheorem{lem}[defn]{\bf Lemma}
\newtheorem{rem}[defn]{\bf Remark}
\newtheorem{exam}[defn]{\bf Example}   
\newtheorem{fact}[defn]{\bf Fact}
\newtheorem{cond}[defn]{\bf Condition}
\def\Pf#1{{\noindent\bf Proof}.\setcounter{equation}{0}}
\def\>#1{{ $\Rightarrow$ }\setcounter{equation}{0}}
\def\<>#1{{ $\Leftrightarrow$ }\setcounter{equation}{0}}
\def\bskip#1{{ \vskip 20pt }\setcounter{equation}{0}}
\def\sskip#1{{ \vskip 5pt }\setcounter{equation}{0}}
\def\mskip#1{{ \vskip 10pt }\setcounter{equation}{0}}
\def\bg#1{\begin{#1}\setcounter{equation}{0}}
\def\ed#1{\end{#1}\setcounter{equation}{0}}

%%%%%%%%%%%%%%%%%%%%%%%%%%%%%%%%%%%%%%%%%%%%%%%%%%%%%%%%%%%%%%%%%%%%%%%%%%%%%%%%%

\title{Relative quasi-Gorensteinness in extriangulated 
categories}
%\thanks{E-mail:~zhenggang_he@163.com}

%\footnotetext{E-mail:~zhenggang_he@163.com}
\smallskip
\author{\small Zhenggang He\\
\small E-mail: zhenggang\_he@163.com\\
\small Dali Nursing Vocational College, Dali 671006, China }
\date{}
\maketitle
\baselineskip 15pt%16pt%14pt%15.5pt%\baselineskip  25.5pt %
%%%%%%%\hskip 18pt
%
% Abstract ------------------------------------------------------
%
%\begin{abstract}
%
\vskip 10pt%
\noindent {\bf Abstract}:
Let $(\Mcc,\Mbe,\mathfrak{s})$ be an extriangulated category with a proper class $\xi$ of $\Mbe$-triangles. In this paper, we study the quasi-Gorensteinness  of extriangulated categories. More precisely, we introduce the notion of  quasi-$\xi$-projective and quasi-$\xi$-Gorenstein projective  objects,  investigate some of their properties and their behavior with respect to $\Mbe$-triangles.  Moreover, we give some equivalent characterizations of objects with finite quasi-$\xi$-Gorenstein projective dimension.  As an application, our main results generalize Mashhad and  Mohammadi's work in module categories.
\mskip\

%\sskip\

\noindent {\bf Keywords}: Extriangulated category;  Proper class; Quasi projective; Quasi-Gorenstein projective.

\noindent {\bf MSC2020}:  18E05; 18G05; 18G20; 18G25
%
%\end{abstract}
%\smallskip
%
\vskip 30pt
% ----------------------------------------------------------------------
%% ----------------------------------------------------------------------
%\def\baselinestretch{1}

\section{Introduction}
\qquad In the category of modules over a ring $R$, Wu and  Jans \cite{LET} introduced the notion of  quasi-projective module.   An $R$-module $M$ is called quasi-projective if and only if for every $R$-module
$A$, every epimorphism $g : M \to A$, and every morphism $f : M \to A$, there is an $f \in  \text{End}_R(M)$ such that $f = gf$. Rangaswamy and Vanaja \cite{KM} introduced the quasi-projective cover of any  object in  Abelian  categories, and they proved that the universal existence of quasi-projective covers  implies that of projective covers, provided  the Abelian  category possesses enough projectives. One can find more information about quasi-projectives and  quasi-projective cover in \cite{Baba, Fuchs,KM, LET}.

Mashhad and  Mohammadi \cite{AF} introduced quasi-Gorenstein projective and quasi-Gorenstein injective modules, and studied some of their properties and investigate their behavior with respect to short exact sequences. Also, they gave homological descriptions of quasi-Gorenstein
homological dimensions of modules when they are finite.

The notion of extriangulated categories was introduced by Nakaoka and Palu in \cite{HY} as a simultaneous generalization of exact categories and triangulated categories.  Hence many results hold on exact categories and triangulated categories can be unified in the same framework. Moreover,there exist extriangulated categories which are neither exact nor triangulated, some examples can be found in \cite{HY,Zhou}. Let $(\Mcc,\Mbe, \mathfrak{s})$ be an extriangulated category. Hu, Zhang and Zhou \cite{JDP} studied a relative homological algebra in $\Mcc$ which parallels the relative homological algebra in a triangulated category. By specifying a class of $\Mbe$-triangles, which is called a proper class $\xi$ of $\Mbe$-triangles, they introduced $\xi$-$\mathcal{G}$projective dimensions and $\xi$-$\mathcal{G}$injective dimensions and discussed their properties. Also, they  gave some characterizations of $\xi$-$\mathcal{G}$injective dimension by using derived functors in \cite{JZP}.

Since the Gorenstein homological dimension has an important role in commutative algebra,  we wish to extend the definitions of  $\xi$-$\mathcal{G}$projective and $\xi$-$\mathcal{G}$injective objects in $(\Mcc,\Mbe, \mathfrak{s})$ using quasi-$\xi$-projective and quasi-$\xi$-injective objects in this paper. We believe that this can make a valuable contribution to the development of relative homological algebra in extriangulated categories.

The paper is organized as follows. In section 2, we recall some definitions of extriangulated categories and outline some properties which will be used throughout the paper.  

In section 3, We fix a proper class of $\Mbe$-triangles $\xi$ in an extriangulated category $(\Mcc,\Mbe, \mathfrak{s})$ which has enough $\xi$-projectives and enough $\xi$-injectives satisfying Condition (WIC) (see \citep[Condition 5.8]{HY}). We  first introduce quasi-$\xi$-projective and quasi-$\xi$-injective objects in $(\Mcc,\Mbe, \mathfrak{s})$ and investigate some of their properties.  we show that the universal existence of quasi-$\xi$-projective covers (resp. quasi-$\xi$-injective envelopes ) implies that of $\xi$-projective covers (resp. $\xi$-injective envelopes ) (see Theorem \ref{LD} and Theorem \ref{HZG} ).

In section 4, we introduce quasi-$\xi$-Gorenstein projective and quasi-$\xi$-Gorenstein injective objects, and we verify some of their properties that one expects to carry over from $\xi$-Gorenstein projective  and $\xi$-Gorenstein injective objects. We show that
 the class of quasi-$\xi$-Gorenstein projective objects is $\xi$-projectively resolving  and (see Theorem \ref{Dec31} and  Theorem \ref{Jan1} ). Moreover, for the dual results, we can see   Theorem \ref{ZEH}.

 In section 4, we define quasi-$\xi$-Gorenstein projective dimension and quasi-$\xi$-Gorenstein injective dimension for any object in $(\Mcc,\Mbe, \mathfrak{s})$ , and we give homological descriptions of these dimensions (see Theorem \ref{HFY} and Theorem \ref{HSB}). 
 
 Throughout this paper, all subcategories are assumed to be full and additive.
 
\section{Preliminaries}
\qquad In this section, we briefly recall some basic definitions of extriangulated categories. We omit some details here, but the reader can find them in \cite{CZZ,JDPPR,YH,HY,HYA,JDP,JZP,JDTP,Zhou}.

\begin{defn}\citep[Definition 2.1 and 2.3]{HY}
	Suppose that $\mathcal{C}$ is equipped with an  biadditive functor $\mathbb{E}:\mathcal{C}^{op}\times \mathcal{C}\rightarrow \mathbf{Ab}$. For any pair of objects $A, C$ in $\mathcal{C}$, an element $\delta \in\mathbb{E}(C,A)$ is called an $\mathbb{E}$-extension. Thus formally, an $\mathbb{E}$-extension is a triplet $(A,\delta,C)$. Since $\mathbb{E}$ is a functor, for any $a\in \mathcal{C}(A, A')$ and $c\in \mathcal{C}(C', C)$, we have $\mathbb{E}$-extensions $\mathbb{E}(C, a)(\delta)\in \mathbb{E}(C, A') $ and $\mathbb{E}(c,A)(\delta)\in \mathbb{E}(C', A).$ We abbreviately denote them by $a_{*}\delta$ and $c^* \delta$ respectively. In this terminology, we have
	\[
		\mathbb{E}(c, a)(\delta)=c^*a_*\delta=a_*c^*\delta
	\] 
	in $\mathbb{E}(C', A')$. For any $ A$, $ C\in \mathcal{C}$, the zero element $0\in\mathbb{E}(C, A)$ is called the split $\mathbb{E}$-extension. And let $\delta\in \mathbb{E}(C,A)$ and $\delta '\in \mathbb{E}(C',A')$ be any pair of $\mathbb{E}$-extensions. A morphism $(a,c):\delta\rightarrow\delta'$ of $\mathbb{E}$-extensions is a pair of morphism $a\in \mathcal{C}(A,A')$ and $c\in \mathcal{C}(C,C')$ in $\mathcal{C}$ satisfying the equality
	\[
		a_*\delta=c^*\delta '.
	\]
\end{defn}

\begin{defn}\label{DJ}\citep[Definition 2.7]{HY}
	Let $A$, $C\in \mathcal{C}$ be any pair of objects. Two sequences of morphisms $A\stackrel{x}\longrightarrow B\stackrel{y}\longrightarrow C$ and $A\stackrel{x'}\longrightarrow B'\stackrel{y'}\longrightarrow C$ in $\mathcal{C}$ are said to be equivalent if there exists an isomorphism $b\in \mathcal{C}(B,B')$ which makes the following diagram commutative.
	\[
	\xymatrix{
		A\ar@{=}[d]\ar[r]^x &B\ar[d]^b\ar[r]^y &C\ar@{=}[d]\\
		A\ar[r]^{x'} &B'\ar[r]^{y'} &C}
	\]
\end{defn}	

We denote the equivalence class of $A\stackrel{x}\longrightarrow B\stackrel{y}\longrightarrow C$ by [$A\stackrel{x}\longrightarrow B\stackrel{y}\longrightarrow C$].

\begin{defn}\citep[Definition 2.8]{HY}
	(1) For any $A$, $C\in \mathcal{C}$, we let
	\[
		0=[A \stackrel{\begin{tiny}\begin{bmatrix}
		1 \\
		0
		\end{bmatrix}\end{tiny}}{\longrightarrow} A \oplus C \stackrel{\begin{tiny}\begin{bmatrix}
		0&1
		\end{bmatrix}\end{tiny}}{\longrightarrow} C].
	\] 
	(2) For any $[A\stackrel{x}\longrightarrow B\stackrel{y}\longrightarrow C]$ and $[A'\stackrel{x'}\longrightarrow B'\stackrel{y'}\longrightarrow C']$, we let
	\[
		[A\stackrel{x}\longrightarrow B\stackrel{y}\longrightarrow C]\oplus[A'\stackrel{x'}\longrightarrow B'\stackrel{y'}\longrightarrow C']=[A\oplus A'\stackrel{x\oplus x'}\longrightarrow B\oplus B'\stackrel{y\oplus y'}\longrightarrow C\oplus C'].
	\] 
\end{defn}

\begin{defn}\citep[Definition 2.9]{HY}
	Let $\mathfrak{s}$ be a correspondence which associates an equivalence class $\mathfrak{s}(\delta)=[A\stackrel{x}{\longrightarrow}B\stackrel{y}{\longrightarrow}C]$ to any $\mathbb{E}$-extension $\delta\in\mathbb{E}(C,A)$ . This $\mathfrak{s}$ is called a  realization of  $\mathbb{E}$, if for any morphism $(a,c):\delta\rightarrow\delta'$ with $\mathfrak{s}(\delta)=[A\stackrel{x}\longrightarrow B\stackrel{y}\longrightarrow C]$ and $\mathfrak{s}(\delta')=[A'\stackrel{x'}\longrightarrow B'\stackrel{y'}\longrightarrow C']$, there exists $b\in \mathcal{C}$ which makes the following  diagram commutative.
	\[
		\xymatrix{
		& A \ar[d]^{a} \ar[r]^{x} & B  \ar[r]^{y}\ar[d]^{b} & C \ar[d]^{c}    \\
		&A'\ar[r]^{x'} & B' \ar[r]^{y'} & C'}
	\]
In the above situation, we say that the triplet $(a,b,c)$ realizes $(a,b)$.
\end{defn}

\begin{defn}\citep[Definition 2.10]{HY}
	Let $\mathcal{C},\mathbb{E}$ be as above. A realization $\mathfrak{s}$ of $\mathbb{E}$ is said to be {\em additive} if it satisfies the following conditions.
	
	(a) For any $A,~C\in\mathcal{C}$, the split $\mathbb{E}$-extension $0\in\mathbb{E}(C,A)$ satisfies $\mathfrak{s}(0)=0$.
	
	(b) $\mathfrak{s}(\delta\oplus\delta')=\mathfrak{s}(\delta)\oplus\mathfrak{s}(\delta')$ for any pair of $\mathbb{E}$-extensions $\delta$ and $\delta'$.
	
\end{defn}

\begin{defn}\citep[Definition 2.12]{HY}
	A triplet $(\mathcal{C}, \mathbb{E},\mathfrak{s})$ is called an  extriangulated category if it satisfies the following conditions. \\
	$\rm(ET1)$ $\mathbb{E}$: $\mathcal{C}^{op}\times\mathcal{C}\rightarrow \mathbf{Ab}$ is a biadditive functor.\\
	$\rm(ET2)$ $\mathfrak{s}$ is an additive realization of $\mathbb{E}$.\\
	$\rm(ET3)$ Let $\delta\in\mathbb{E}(C,A)$ and $\delta'\in\mathbb{E}(C',A')$ be any pair of $\mathbb{E}$-extensions, realized as
	\[
		\mathfrak{s}(\delta)=[A\stackrel{x}{\longrightarrow}B\stackrel{y}{\longrightarrow}C] \quad \text{and} \quad \mathfrak{s}(\delta')=[A'\stackrel{x'}{\longrightarrow}B'\stackrel{y'}{\longrightarrow}C'].
	\]
For any pair $(a, b)$ defining such a commutative square in $\mathcal{C}$, 	
\[
	\xymatrix{
		A \ar[d]^{a} \ar[r]^{x} & B \ar[d]^{b} \ar[r]^{y} & C \\
		A'\ar[r]^{x'} &B'\ar[r]^{y'} & C'}
\]
there is  a morphism $c:C\to C'$  that the pair $(a,c)$ defines a morphism of extensions: $\delta\rightarrow\delta'$ which is realized by the triple $(a,b,c)$.\\
	$\rm(ET3)^{op}$ Dual of $\rm(ET3)$.\\
    $\rm(ET4)$ Let $\delta\in \mathbb{E}(D,A)$ and $\delta'\in \mathbb{E}(F,B)$ be $\mathbb{E}$-extensions respectively realized by
	\[
		A\stackrel{f}{\longrightarrow}B\stackrel{f'}{\longrightarrow}D\quad \text{and }\quad B\stackrel{g}{\longrightarrow}C\stackrel{g'}{\longrightarrow}F.
	\]
Then there exist an object $E\in\mathcal{C}$, a commutative diagram
	\[
		\xymatrix{
		A \ar@{=}[d]\ar[r]^{f} &B\ar[d]^{g} \ar[r]^{f'} & D\ar[d]^{d} \\
		A \ar[r]^{h} & C\ar[d]^{g'} \ar[r]^{h'} & E\ar[d]^{e} \\
		& F\ar@{=}[r] & F }
	\]
in $\mathcal{C}$, and an $\mathbb{E}$-extension $\delta''\in \mathbb{E}(E,A)$ realized by $A\stackrel{h}{\longrightarrow}C\stackrel{h'}{\longrightarrow}E$, which satisfy the following compatibilities.\\
	$(\textrm{i})$ $D\stackrel{d}{\longrightarrow}E\stackrel{e}{\longrightarrow}F$ realizes $f'_*\delta'$,\\
	$(\textrm{ii})$ $d^*\delta''=\delta$,\\
	$(\textrm{iii})$ $f_*\delta''=e^*\delta'$.\\
	$\rm(ET4)^{op}$ Dual of $\rm(ET4)$.
\end{defn}
For examples of extriangulated categories, see \citep[Example 2.13]{HY} and \citep[Remark 3.3]{JDP}. We will use the following terminology.

\begin{defn}\citep[Definition 2.15 and 2.19]{HY}
	Let $(\mathcal{C},\mathbb{E},\mathfrak{s})$ be an extriangulated category.
	
	(1) A sequence $A\stackrel{x}\longrightarrow B\stackrel{y}\longrightarrow C$ is called conflation if it realizes some $\mathbb{E}$-extension $\delta\in \mathbb{E}(C,A)$. In this case, $x$ is called an  inflation and $y$ is called a deflation.
	
	(2) If a conflation $A\stackrel{x}\longrightarrow B\stackrel{y}\longrightarrow C$ realizes $\delta\in \mathbb{E}(C,A)$, we call the pair ($A\stackrel{x}\longrightarrow B\stackrel{y}\longrightarrow C, \delta$) an $\mathbb{E}$-triangl, and write it by
	\[
		A\stackrel{x}\longrightarrow B \stackrel{y}\longrightarrow C  \stackrel{\delta}\dashrightarrow.
	\]
We usually don't write this “$\delta$” if it not used in the argument.
	
	(3) Let $A\stackrel{x}\longrightarrow B \stackrel{y}\longrightarrow C  \stackrel{\delta}\dashrightarrow$ and $A'\stackrel{x'}\longrightarrow B' \stackrel{y'}\longrightarrow C'  \stackrel{\delta'}\dashrightarrow$ be any pair of $\mathbb{E}$-triangles. If a triplet $(a,b,c)$ realizes $(a,c):\delta \rightarrow\delta'$, then we write it as
	
	\[
		\xymatrix{
			A \ar[d]^{a} \ar[r]^{x} & B  \ar[r]^{y}\ar[d]^{b} & C \ar[d]^{c} \ar@{-->}[r]^{\delta} &  \\
			A'\ar[r]^{x'} & B' \ar[r]^{y'} & C' \ar@{-->}[r]^{\delta'} &    }
	\]
	and call $(a,b,c)$ a  morphism of $\mathbb{E}$-triangles.
	
	(4) A $\mathbb{E}$-triangle $A\stackrel{x}\longrightarrow B \stackrel{y}\longrightarrow C  \stackrel{\delta}\dashrightarrow$ is called split if $\delta=0$.
\end{defn}

Following lemmas will be used many times in this paper.                              

\begin{lem}\label{FJ}\citep[Corollary 3.5]{HY}
	Assume that $(\mathcal{C},\mathbb{E},\mathfrak{s})$ satisfies $\rm(ET1)$, $\rm(ET2)$, $\rm(ET3)$ and $\rm(ET3)^{op}$. Let
	\[
		\xymatrix{
			A\ar[r]^x\ar[d]^a&B\ar[r]^y\ar[d]^b&C\ar[d]^c\ar@{-->}[r]^{\delta} & \\
			A'\ar[r]^{x'}&B'\ar[r]^{y'}&C'\ar@{-->}[r]^{\delta'} &
		}
	\]
	be any morphism of $\mathbb{E}$-triangles. Then the following are equivalent.
	
	(1) $a$ factors through $x$.
	
	(2) $a_*\delta=c^*\delta'=0$.
	
	(3) $c$ factors through $y'$.
\end{lem}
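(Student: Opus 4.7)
The plan is to establish the circle of implications $(1) \Rightarrow (2) \Rightarrow (1)$ and $(2) \Rightarrow (3) \Rightarrow (2)$ by exploiting the compatibility $a_*\delta = c^*\delta'$ built into the definition of a morphism of $\mathbb{E}$-extensions, together with the standard long exact sequences attached to an $\mathbb{E}$-triangle. These long exact sequences come from Nakaoka--Palu \cite{HY}: for any $\mathbb{E}$-triangle $A\xrightarrow{x} B\xrightarrow{y} C\xdashrightarrow{\delta}$ and any object $X\in\mc{C}$, the sequences
\[
\mc{C}(B,X)\xrightarrow{-\circ x}\mc{C}(A,X)\xrightarrow{(-)_*\delta}\mb{E}(C,X)
\]
and
\[
\mc{C}(X,B)\xrightarrow{y\circ -}\mc{C}(X,C)\xrightarrow{(-)^*\delta}\mb{E}(X,A)
\]
are exact.

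Apply the first of these sequences with $X=A'$ to the top $\mb{E}$-triangle: the element $a\in\mc{C}(A,A')$ satisfies $a_*\delta=0$ exactly when $a$ lies in the image of $-\circ x$, i.e.\ when $a=hx$ for some $h:B\to A'$. This gives the equivalence $(1)\Leftrightarrow (a_*\delta=0)$. Dually, apply the second sequence with $X=C$ to the bottom $\mb{E}$-triangle: the element $c\in\mc{C}(C,C')$ satisfies $c^*\delta'=0$ exactly when $c$ lies in the image of $y'\circ -$, i.e.\ when $c=y'k$ for some $k:C\to B'$. This gives $(3)\Leftrightarrow (c^*\delta'=0)$.

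Since $(a,c):\delta\to\delta'$ is a morphism of $\mb{E}$-extensions, the defining identity $a_*\delta=c^*\delta'$ holds, so the two vanishing conditions $a_*\delta=0$ and $c^*\delta'=0$ are automatically equivalent. Combining this with the two biconditionals of the previous paragraph closes the loop and yields $(1)\Leftrightarrow(2)\Leftrightarrow(3)$.

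There is no substantive obstacle here; the entire argument is a formal consequence of the exactness of the two representable sequences attached to an $\mb{E}$-triangle. The only piece of input that is not completely tautological is the exactness of those sequences, which is foundational in the theory of extriangulated categories and is precisely what makes $\delta$ behave like a connecting homomorphism in classical homological algebra.
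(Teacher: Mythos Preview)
The paper does not prove this lemma itself; it is quoted verbatim from \cite[Corollary~3.5]{HY}. Your argument is correct. One minor caution: the exactness of the long sequences at the connecting-map positions (which is what you invoke) is, in Nakaoka--Palu's numbering, Corollary~3.12, stated \emph{after} Corollary~3.5, and the two statements have essentially the same content; so if one wants a genuinely self-contained proof one should check that those exact sequences can be established independently of the present lemma (they can, directly from the realization of split extensions together with (ET3) and (ET3)$^{\mathrm{op}}$).
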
	
	\noindent In particular, in the case $\delta = \delta'$ and $(a, b, c) =(1_A, 1_B, 1_C )$, we have
	\[
		x \text{\; is a section} \Leftrightarrow \delta \text{\; is split} \Leftrightarrow y \text{\;is a retraction}.
	\]

	\begin{lem}\label{BH}\citep[Proposition 3.15]{HY}
		Let $(\mathcal{C},\mathbb{E},\mathfrak{s})$ be an extriangulated category. Then the following hold.
		
		(1) Let $C$  be any object, and let $A_1\stackrel{x_1}{\longrightarrow}B_1\stackrel{y_1}{\longrightarrow}C\stackrel{\delta_1}\dashrightarrow$  and $A_2\stackrel{x_2}{\longrightarrow}B_2\stackrel{y_2}{\longrightarrow}C\stackrel{\delta_2}\dashrightarrow$  be any pair of $\mathbb{E}$-triangles. Then there is a commutative diagram in $\mathcal{C}$
	\[
		\xymatrix{
			&{A_2}\ar[d]^{m_2}\ar@{=}[r]&{A_2}\ar[d]^{x_2}\\
			{A_1}\ar@{=}[d]\ar[r]^{m_1}&M\ar[r]^{e_1}\ar[d]^{e_2}&{B_2}\ar[d]^{y_2}\\
			 {A_1}\ar[r]^{x_1}&{B_1}\ar[r]^{y_1}&C}
	\]
		\noindent which satisfies $\mathfrak{s}(y^*_2\delta_1)=[A_1\stackrel{m_1}{\longrightarrow}M\stackrel{e_1}{\longrightarrow}B_2]$, $\mathfrak{s}(y^*_1\delta_2)=[A_2\stackrel{m_2} {\longrightarrow}M\stackrel{e_2}{\longrightarrow}B_1]$ and $m_{1*}\delta_1 +m_{2*}\delta_2 = 0$.
		
		(2)  Let $A$  be any object, and let $A\stackrel{x_1}{\longrightarrow}B_1\stackrel{y_1}{\longrightarrow}C_1\stackrel{\delta_1}\dashrightarrow$  and $A\stackrel{x_2}{\longrightarrow}B_2\stackrel{y_2}{\longrightarrow}C_2\stackrel{\delta_2}\dashrightarrow$  be any pair of $\mathbb{E}$-triangles. Then there is a commutative diagram in $\mathcal{C}$
		\[
			\xymatrix{
				A\ar[d]^{x_2}\ar[r]^{x_1}&{B_1}\ar[d]^{m_2}\ar[r]^{y_1}&{C_1}\ar@{=}[d]\\
				{B_2}\ar[d]^{y_2}\ar[r]^{m_1}&M\ar[r]^{e_1}\ar[d]^{e_2}&{C_1}\\
				{C_2}\ar@{=}[r]&{C_2}
			}
		\]
		\noindent which satisfies $\mathfrak{s}(x_{2_*}\delta_1)=[B_2\stackrel{m_1}{\longrightarrow}M\stackrel{e_1}{\longrightarrow}C_1]$, $\mathfrak{s}(x_{1_*}\delta_2)=[B_1\stackrel{m_2}{\longrightarrow}M\stackrel{e_2}{\longrightarrow}C_2]$ and $e_1^*\delta_1 +e_2^*\delta_2 = 0$.
	\end{lem}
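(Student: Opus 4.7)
The plan is to prove part~(1) via a $3\times 3$ construction built from the axioms, and to handle part~(2) dually by reversing the roles of inflations and deflations and invoking (ET4) instead of (ET4)$^{\mathrm{op}}$. I focus on (1).

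First I would form the middle row. Applying the bifunctor to $y_2$ in the first variable produces the pulled-back extension $y_2^{\ast}\delta_1\in\mathbb{E}(B_2,A_1)$; fix any realization
\[
A_1\xrightarrow{m_1}M\xrightarrow{e_1}B_2\dashrightarrow y_2^{\ast}\delta_1 .
\]
Because $(1_{A_1},y_2)$ is, by the very definition of $y_2^{\ast}\delta_1$, a morphism of $\mathbb{E}$-extensions $y_2^{\ast}\delta_1\to\delta_1$, axiom (ET3) supplies $e_2\colon M\to B_1$ such that $(1_{A_1},e_2,y_2)$ realizes it. This already installs the lower two rows and the right column of the target diagram, together with the commutativities $e_2 m_1=x_1$ and $y_1 e_2=y_2 e_1$.

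Next I would exhibit the top row and the middle column. The composable pair of deflations $M\xrightarrow{e_1}B_2\xrightarrow{y_2}C$, carried by the $\mathbb{E}$-triangles $A_1\to M\to B_2\dashrightarrow y_2^{\ast}\delta_1$ and $A_2\to B_2\to C\dashrightarrow\delta_2$, is precisely the input configuration for (ET4)$^{\mathrm{op}}$. That axiom outputs an $\mathbb{E}$-triangle realizing the composite $y_2 e_1$ together with a connecting $\mathbb{E}$-triangle between the two kernels $A_1$ and $A_2$. Comparing the resulting conflation with $A_1\xrightarrow{x_1}B_1\xrightarrow{y_1}C\dashrightarrow\delta_1$ and invoking the equivalence of realizations of a fixed extension (using the $e_2$ already in hand to pin down the middle term), I identify the new middle object with $B_1$; the connecting $\mathbb{E}$-triangle then takes the form $A_2\xrightarrow{m_2}M\xrightarrow{e_2}B_1\dashrightarrow y_1^{\ast}\delta_2$, with $e_1 m_2=x_2$. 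This completes the $3\times 3$ diagram and identifies both new extensions as claimed.

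Finally the identity $m_{1\ast}\delta_1+m_{2\ast}\delta_2=0$ follows from the compatibility clauses of (ET4)$^{\mathrm{op}}$ (the duals of $d^{\ast}\delta''=\delta$ and $f_{\ast}\delta''=e^{\ast}\delta'$), which exhibit the two pushforwards as images in $\mathbb{E}(C,M)$ of one common $\mathbb{E}$-extension with opposite signs, making their sum vanish. The main obstacle is bookkeeping rather than conceptual: the four extensions $\delta_1,\delta_2,y_1^{\ast}\delta_2,y_2^{\ast}\delta_1$ and their various pushforwards must be tracked simultaneously, and the application of (ET4)$^{\mathrm{op}}$ must be arranged so that the object it produces can be consistently identified with the already-fixed $B_1$ via the $e_2$ from Step~1; once this identification is done, the axiom delivers both the remaining commutativities and the additivity relation at once. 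Part~(2) follows by the same argument in $\mathcal{C}^{\mathrm{op}}$.
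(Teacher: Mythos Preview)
The paper does not supply a proof of this lemma; it is quoted from Nakaoka--Palu \cite[Proposition~3.15]{HY} and stated without argument. There is therefore no proof in the present paper to compare your proposal against.

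As for the sketch itself, the decisive step does not work as written. Applying (ET4)$^{\mathrm{op}}$ to the composable deflations $M\xrightarrow{e_1}B_2\xrightarrow{y_2}C$ produces an $\mathbb{E}$-triangle of the form $D\to M\to C$ for the composite $y_2e_1$, together with a connecting triangle $A_1\to D\to A_2$ between the two kernels; it does \emph{not} hand you an $\mathbb{E}$-triangle of the shape $A_2\to M\to B_1$. Your proposed ``comparison with $A_1\xrightarrow{x_1}B_1\xrightarrow{y_1}C$'' does not typecheck: that triangle has middle term $B_1$, whereas the (ET4)$^{\mathrm{op}}$ output has middle term $M$, so there is no identification of middle objects to be made, and the $e_2$ from your first step does not automatically become a deflation. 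What is actually needed is a separate argument that $e_2\colon M\to B_1$ is a deflation and that $A_2\xrightarrow{m_2}M\xrightarrow{e_2}B_1$ realizes $y_1^{\ast}\delta_2$; a single invocation of (ET4)$^{\mathrm{op}}$ does not deliver this. In Nakaoka--Palu the argument instead passes through an auxiliary homotopy-cartesian construction, and the additivity relation $m_{1\ast}\delta_1+m_{2\ast}\delta_2=0$ is obtained by an explicit calculation rather than read off from the (ET4)$^{\mathrm{op}}$ compatibility clauses.
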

	Now we are in the position to recall the definition of proper class  of $\mathbb{E}$-triangles.  
	We always assume that $(\mathcal{C},\mathbb{E},\mathfrak{s})$ is an extriangulated categroy.

\begin{defn}\citep[Section 3]{JDP}
  Let $\xi$ be a class of $\mathbb{E}$-triangles. One says $\xi$ is closed under base change if for any $\mathbb{E}$-triangle
  \[
	 A\stackrel{x}\longrightarrow B \stackrel{y}\longrightarrow C  \stackrel{\delta}\dashrightarrow\ 
	  \in \xi
  \]
  \noindent and any morphism $c:C'\rightarrow C$, then any $\mathbb{E}$-triangle $A\stackrel{x'}\longrightarrow B' \stackrel{y'}\longrightarrow C'  \stackrel{c^*\delta}\dashrightarrow$
 belongs to $\xi$. Dually, one says $\xi$ is closed under cobase change if for any $\mathbb{E}$-triangle
  \[
	  A\stackrel{x}\longrightarrow B \stackrel{y}\longrightarrow C  \stackrel{\delta}\dashrightarrow\ 
	  \in \xi
  \]
  \noindent and any morphism $a:A\rightarrow A'$, then any $\mathbb{E}$-triangle  $A'\stackrel{x'}\longrightarrow B' \stackrel{y'}\longrightarrow C \stackrel{a_*\delta}\dashrightarrow$
   belongs to $\xi$.
\end{defn}
\begin{defn}\citep[Section 3]{JDP}
  A class of $\mathbb{E}$-triangles $\xi$ is called saturated if in the situation of Lemma \ref{BH}(1), when $A_2\stackrel{x_2}\longrightarrow B_2 \stackrel{y_2}\longrightarrow C  \stackrel{\delta_2}\dashrightarrow$ and $A_1\stackrel{m_1}\longrightarrow M \stackrel{e_1}\longrightarrow B_2  \stackrel{y_2^*\delta_1}\dashrightarrow$ belong to $\xi$, then the
  $\mathbb{E}$-triangle $A_1\stackrel{x_1}\longrightarrow B_1 \stackrel{y_1}\longrightarrow C  \stackrel{\delta_1}\dashrightarrow$ belongs to  $\xi$. 
\end{defn}

We denote the calss  of an extriangulated category $(\mathcal{C},\mathbb{E},\mathfrak{s})$ by $\Delta_0$ which consisting of the split $\mathbb{E}$-triangle. 
\begin{defn}\label{ZL}\citep[Definition 3.1]{JDP}
  Let $\xi$ be a class of $\mathbb{E}$-triangles which is closed under isomorphisms. $\xi$ is called a  proper class of $\mathbb{E}$-triangles if the following conditions holds:
  
  (1) $\xi$ is closed under finite coproducts and $\Delta_0 \subseteq \xi$.
  
  (2) $\xi$ is closed under base change and cobase change.
  
  (3) $\xi$ is saturated.
\end{defn}

\begin{defn}\citep[Definition 3.4]{JDP}
	Let $\xi$ be a proper class of $\mathbb{E}$-triangles. A morphism $x$ is called $\xi$-inflation if there exists an $\mathbb{E}$-triangle $A\stackrel{x}\longrightarrow B \stackrel{y}\longrightarrow C  \stackrel{\delta}\dashrightarrow$ in $\xi$. A morphism $y$ is called $\xi$-deflation if there exists an $\mathbb{E}$-triangle $A\stackrel{x}\longrightarrow B \stackrel{y}\longrightarrow C  \stackrel{\delta}\dashrightarrow$ in $\xi$.
\end{defn}

\begin{defn}\citep[Definition 4.1]{JDP} An object $P\in \mathcal{C}$ is called $\xi$-projective if for any $\mathbb{E}$-triangle
  \[
	A\stackrel{x}\longrightarrow B \stackrel{y}\longrightarrow C  \stackrel{\delta}\dashrightarrow
  \]
  lies in $\xi$, the induced sequence of abelian groups
  \[
  0\longrightarrow\mathcal{C}(P,A)\longrightarrow \mathcal{C}(P,B)\longrightarrow\mathcal{C}(P,C)\longrightarrow 0
  \]
is exact. Dually, we have the definition of $\xi$-injective.
\end{defn}

We denote $\P$ (resp. $\mathcal{I}(\xi)$) the class of $\xi$-projective (resp. $\xi$-injective) objects of $\Mcc$. An extriangulated category $(\mathcal{C},\mathbb{E},\mathfrak{s})$ is said to have enough $\xi$-projectives  (resp. enough $\xi$-injectives) provided that for each object $A\in \mathcal{C}$ there exists an $\mathbb{E}$-triangle $K\longrightarrow P\longrightarrow A\dashrightarrow$ (resp. $A\longrightarrow I\longrightarrow K\dashrightarrow$) in $\xi$ with $P\in\mathcal{P}(\xi)$(resp. $I\in\mathcal{I}(\xi)$). One can also find the above concept in \citep[Section 4]{JDP}.

\begin{lem}\label{FBZH1}\citep[Lemma 4.2]{JDP}
  If $\mathcal{C}$ has enough $\xi$-projectives, then an $\mathbb{E}$-triangle \[ A\longrightarrow B\longrightarrow C\dashrightarrow\] lies in $\xi$ if and only if induced sequence of abelian groups
  \[
  0\longrightarrow\mathcal{C}(P,A)\longrightarrow \mathcal{C}(P,B)\longrightarrow\mathcal{C}(P,C)\longrightarrow0
  \]
  is exact for all $P\in\mathcal{P}(\xi)$.
\end{lem}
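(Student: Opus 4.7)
The plan is to prove both directions, with the nontrivial direction resting on the pullback construction of Lemma \ref{BH}(1) together with the saturatedness of $\xi$. The forward implication is immediate: if the $\mathbb{E}$-triangle $A\stackrel{x}{\to} B\stackrel{y}{\to} C\dashrightarrow \delta$ lies in $\xi$, then by the very definition of $\xi$-projective object the sequence $0\to \mathcal{C}(P,A)\to\mathcal{C}(P,B)\to\mathcal{C}(P,C)\to 0$ is exact for every $P\in\mathcal{P}(\xi)$.

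For the converse, suppose the Hom-sequence is exact for every $P\in\mathcal{P}(\xi)$. Using the existence of enough $\xi$-projectives, pick an $\mathbb{E}$-triangle $K\to P\stackrel{p}{\to}C\dashrightarrow\delta_0$ in $\xi$ with $P\in\mathcal{P}(\xi)$. Surjectivity of $\mathcal{C}(P,B)\to\mathcal{C}(P,C)$ produces $q\colon P\to B$ with $yq=p$. Now apply Lemma \ref{BH}(1) to the pair of triangles $A\to B\stackrel{y}{\to}C\dashrightarrow\delta$ and $K\to P\stackrel{p}{\to}C\dashrightarrow\delta_0$ sharing the vertex $C$. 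This yields a $3\times 3$ diagram in which the middle row $A\stackrel{m_1}{\to} M\stackrel{e_1}{\to} P$ realizes $p^{*}\delta$, the middle column $K\stackrel{m_2}{\to} M\stackrel{e_2}{\to} B$ realizes $y^{*}\delta_0$, and the triplet $(1_A, e_2, p)$ constitutes a morphism of $\mathbb{E}$-triangles from $A\to M\to P\dashrightarrow p^{*}\delta$ down to $A\to B\to C\dashrightarrow\delta$.

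The key observation is that the rightmost component $p$ of this morphism factors through $y$ via $q$. Hence Lemma \ref{FJ} (equivalence of conditions (2) and (3)) applied to $(1_A,e_2,p)$ gives $p^{*}\delta = 1_{A\,*}(p^{*}\delta)=0$. Consequently the triangle $A\to M\to P\dashrightarrow p^{*}\delta$ is split and therefore lies in $\Delta_0\subseteq\xi$. In the notation of Lemma \ref{BH}(1) with $\delta_1=\delta$, $\delta_2=\delta_0$, $y_1=y$, $y_2=p$, we now have $\delta_2\in\xi$ and $y_2^{*}\delta_1 = p^{*}\delta\in\xi$, so the saturatedness axiom of Definition \ref{ZL}(3) forces $\delta_1=\delta\in\xi$, which is the desired conclusion.

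The main subtlety is the realization that under the standing hypotheses the pullback $A\to M\to P\dashrightarrow p^{*}\delta$ must in fact be split—this is where the Hom-exactness assumption is effectively used, via Lemma \ref{FJ}—after which closure of $\xi$ under base change and the saturatedness property package the result without further calculation.
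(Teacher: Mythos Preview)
The paper does not prove this lemma; it is merely quoted as \citep[Lemma 4.2]{JDP} in the preliminaries with no argument supplied. Your proof is correct and is in fact the standard one: the forward direction is definitional, and for the converse you cover $C$ by a $\xi$-projective $P$, lift $p$ through $y$ via the assumed Hom-surjectivity, build the $3\times 3$ diagram of Lemma~\ref{BH}(1), use Lemma~\ref{FJ} to deduce $p^{*}\delta=0$ from the factorization $p=yq$, and then invoke saturatedness (Definition~\ref{ZL}(3)) with $\delta_2=\delta_0\in\xi$ and $y_2^{*}\delta_1=p^{*}\delta\in\Delta_0\subseteq\xi$ to conclude $\delta\in\xi$.
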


\begin{defn}\citep[Definition 4.5]{JDP}
	Let $\mathcal{W}$ be a class of objects in $\Mcc$. An $\mathbb{E}$-triangle $A\longrightarrow B\longrightarrow C\dashrightarrow$ in $\mathcal{C}$ is called to be $\Mcc(-,\mathcal{W})$-exact (respectively $\Mcc(\mathcal{W},-)$-exact) if for any $W\in\mathcal{W}$, the induced sequence of abelian group $0 \rightarrow \Mcc(C,W)\rightarrow\Mcc(B,W)\rightarrow\Mcc(A,W)\rightarrow0$ (respectively $0 \rightarrow \Mcc(W,A)\rightarrow\Mcc(W,B)\rightarrow\Mcc(W,C)\rightarrow0$) is exact in $\mathbf{Ab}$.
\end{defn}

Here we introduce the weak idempotent completeness condition (WIC) for an extriangulated categories. 
\begin{cond}\citep[Condition 5.81]{HY} (WIC Condition)\label{WIC}
  Consider the following conditions.
  
  (1) Let $f\in\Mcc(A,B),g\in\Mcc(B,C)$ be any pair of morphisms. If $gf$ is an inflation, then so is $f$.
  
   (2) Let $f\in\Mcc(A,B),g\in\Mcc(B,C)$ be any pair of morphisms. If $gf$ is a deflation, then so is $g$.
\end{cond}

Throughout the  paper, we always assume that $\mathcal{C}=(\Mcc,\Mbe,\mathfrak{s})$ is an extriangulated category with enough $\xi$-projectives and enough $\xi$-injectives satisfying  Condition(WIC) and $\xi$ is a proper class of $\Mbe$-triangles in $(\Mcc,\Mbe,\mathfrak{s})$.

Now we recall some definitions and properties about relative derived functors  for extriangulated categories.  And for more  details reader can see \cite{JZP}.

A $\xi$-projective resolution of an object $A\in\Mcc$ is a  right bounded   $\xi$-exact complex
	\[
	\xymatrix{
		\cdots\ar[r]&P_{n}\ar[r]&P_{n-1}\ar[r]&{\cdots}\ar[r]&P_{1}\ar[r]&P_{0}\ar[r]&A\ar[r]&0
	}
	\]
in $\Mcc$  with $P_{n}\in\P$ for all $n\geqslant0$. Dually, 
a $\xi$-injective coresolution of an object $A\in\Mcc$ is a  left bounded  $\xi$-exact complex
	\[
	\xymatrix{
		0\ar[r]& A\ar[r]&I_{0}\ar[r]&I_{-1}\ar[r]&{\cdots}\ar[r]&I_{-(n-1)}\ar[r]&I_{-n}\ar[r]&\cdots
	}
	\]
in $\Mcc$  with $I_{n}\in \mathcal{I}(\xi)$ for all $n\geqslant 0$.

\begin{defn}\citep[Definition 3.2]{JZP}
	Let $A$ and $B$ be objects in $\mathcal{C}$.
	
	(1) If we choose a $\xi$-projective resolution $\xymatrix{\mathbf{P}\ar[r]&A}$ of $A$, then for any  integer $n\geqslant 0$, the $\xi$-cohomology groups $\xi \text{xt}_{\mathcal{P}(\xi)}^n(A, B)$ are defined as
	\[
	\xi \text{xt}_{\mathcal{P}(\xi)}^n(A, B)=H^n(\mathcal{C}(\mathbf{P},B)).
	\]
	
	(2) If we choose a $\xi$-injective coresolution $\xymatrix{B\ar[r]&\mathbf{I}}$ of $B$, then for any  integer $n\geqslant 0$, the $\xi$-cohomology groups $\xi \text{xt}_{\mathcal{I}(\xi)}^n(A, B)$ are defined as
	\[
	\xi \text{xt}_{\mathcal{I}(\xi)}^n(A, B)=H^n(\mathcal{C}(A,\mathbf{I})).
	\]
	\end{defn}
There exists an isomorphism $\xi \text{xt}_{\mathcal{P}(\xi)}^n(A, B)\backsimeq \xi \text{xt}_{\mathcal{I}(\xi)}^n(A, B)$, and denoting the isomorphism class of this abelian group by $\xi \text{xt}_{\xi}^n(A,B)$. Let $\mathcal{X}$ be a class of some object in $\Mcc$, and  we set 
\[\mathcal{X}^{\bot}=\{B\in\Mcc~|~\ext^n(X,B)=0,\forall n\geqslant1,\forall X\in\mathcal{X}\}.\]

It is clear by definition, there is always a natural isomorphism 
\[\varphi:\mathcal{C}(A,B)\rightarrow\ext^0(A,B),\]
when  $A\in \P$ or $B\in \mathcal{I}(\xi)$. 

\begin{lem}\label{LZHL}\citep[Lemma 3.4]{JZP}
	If $A\stackrel{x}\longrightarrow B \stackrel{y}\longrightarrow C  \stackrel{\delta}\dashrightarrow$ is an $\mathbb{E}$-triangle in $\xi$, then for any objects $X$  in $\mathcal{C}$, we have the following long exact sequences in $\mathbf{Ab}$
	\[
	\xymatrix{
	0\ar[r]&\ext^0(X,A)\ar[r]&\ext^0(X,B)\ar[r]&\ext^0(X,C)\ar[r]&\ext^1(X,A)\ar[r]&\cdots
	}
	\]
	and
	\[
	\xymatrix{
	0\ar[r]&{\ext^0(C,X)}\ar[r]&{\ext^0(B,X)}\ar[r]&{\ext^0(A,X)}\ar[r]&{\ext^1(C,X)}\ar[r]&{\cdots}}
	\]
\end{lem}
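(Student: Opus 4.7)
The plan is to exploit the defining property of $\xi$-projectives (and dually $\xi$-injectives): when applied to an $\mathbb{E}$-triangle in $\xi$, the representable functor $\mathcal{C}(P,-)$ (resp.\ $\mathcal{C}(-,I)$) turns it into a short exact sequence of abelian groups. Carrying this out degreewise along a resolution promotes the conflation to a short exact sequence of Hom-complexes, and the standard zigzag lemma in $\mathbf{Ab}$ then produces the desired long exact cohomology sequences.

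More concretely, for the first sequence I would fix a $\xi$-projective resolution $\mathbf{P}\to X$ and apply $\mathcal{C}(P_n,-)$ to the given $\mathbb{E}$-triangle $A\to B\to C\dashrightarrow$. Since $P_n\in\mathcal{P}(\xi)$ and the triangle lies in $\xi$, by definition of $\xi$-projectivity the sequence
\[
0\to \mathcal{C}(P_n,A)\to \mathcal{C}(P_n,B)\to \mathcal{C}(P_n,C)\to 0
\]
is exact for every $n\geqslant 0$. Assembling these termwise yields a short exact sequence of cochain complexes
\[
0\to \mathcal{C}(\mathbf{P},A)\to \mathcal{C}(\mathbf{P},B)\to \mathcal{C}(\mathbf{P},C)\to 0
\]
in $\mathbf{Ab}$. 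Passing to cohomology, using the definition $\xi\mathrm{xt}^{n}_{\mathcal{P}(\xi)}(X,-)=H^{n}(\mathcal{C}(\mathbf{P},-))$, and then invoking the isomorphism $\xi\mathrm{xt}^{n}_{\mathcal{P}(\xi)}\cong\ext^{n}$ recorded right before the lemma, delivers the first long exact sequence; the initial zero comes from left-exactness of $H^{0}$ applied to a short exact sequence of complexes concentrated in nonnegative degrees. The second long exact sequence follows dually: fix a $\xi$-injective coresolution $X\to\mathbf{I}$, apply the contravariant functor $\mathcal{C}(-,I_{n})$ to the conflation (now exactness is guaranteed by the $\xi$-injectivity of $I_{n}$), and repeat the same cohomology argument with $\xi\mathrm{xt}^{n}_{\mathcal{I}(\xi)}(-,X)=H^{n}(\mathcal{C}(-,\mathbf{I}))$.

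The only point that needs real care is that the connecting homomorphisms coming from the two snake-lemma constructions coincide with the canonical connecting maps intrinsic to $\ext^{n}$; this is handled by the standard chain-level comparison that underlies the isomorphism $\xi\mathrm{xt}^{n}_{\mathcal{P}(\xi)}\cong\xi\mathrm{xt}^{n}_{\mathcal{I}(\xi)}$ used above, so no genuinely new argument is required. Beyond this, the proof is a direct application of the definitions of $\xi$-projective and $\xi$-injective object; no appeal to the more delicate axioms $(\mathrm{ET4})$, to saturation, or to Condition~(WIC) is needed.
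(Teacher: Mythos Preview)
The paper does not supply its own proof of this lemma: it is stated in the preliminaries and attributed to \cite[Lemma~3.4]{JZP}, so there is nothing in the present paper to compare against. Your sketch is the standard derived-functor argument and is correct: applying $\mathcal{C}(P_n,-)$ (resp.\ $\mathcal{C}(-,I_n)$) termwise to a conflation in $\xi$ yields short exact sequences by the very definition of $\xi$-projective (resp.\ $\xi$-injective), and the long exact cohomology sequence of a short exact sequence of complexes does the rest. Your remark about identifying the two families of connecting maps is unnecessary for the lemma as stated, since each long exact sequence is obtained from a single construction (projective resolution for the first, injective coresolution for the second); the balancing isomorphism $\xi\mathrm{xt}^n_{\mathcal{P}(\xi)}\cong \xi\mathrm{xt}^n_{\mathcal{I}(\xi)}$ is only needed to justify writing both sequences with the common symbol $\ext^n$, not to compare connecting morphisms.
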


\section{Quasi-$\xi$-projective objects and some fundamental properties}

In this section, we introduce the notion of the quasi-$\xi$-projective objects and examine the properties of this objects.
\subsection{The definition of quasi-$\xi$-projective objects}
\begin{defn}
	An object $Q$ in $\Mcc$ is called quasi-$\xi$-projective if for any $\mathbb{E}$-triangle 
	\[
		A\stackrel{x}\longrightarrow Q \stackrel{y}\longrightarrow B  \stackrel{\delta}\dashrightarrow
  \]
	lies in $\xi$, the induced sequence of abelian groups
	\[
	0\longrightarrow\mathcal{C}(Q,A)\longrightarrow \mathcal{C}(Q,Q)\longrightarrow\mathcal{C}(Q,B)\longrightarrow 0
	\]
  is exact.  Dually, we have the definition of quasi-$\xi$-injective.  We denote $\QP$ (resp. $\QI$) the  full subcategory  of quasi-$\xi$-projective (resp. quasi-$\xi$-injective) objects of $\Mcc$. 
\end{defn}

\begin{rem}
	According to the definition, we have 

(1) $\P\subseteq \QP$.

(2) $\QP$ is closed under the isomorphism.
\end{rem}
\begin{proof}
	(1) is obvious. We only need to prove (2). Let $Q$ lie in $\QP$ and $f:Q'\to Q$ be  an isomorphism. For any $\mathbb{E}$-triangle $A\stackrel{x'}\longrightarrow Q'\stackrel{y'}\longrightarrow B\stackrel{\delta}\dashrightarrow$ in $\xi$, there is a commutative diagram as follows:
	\[
	\xymatrix{
		A\ar@{=}[d]\ar[r]^{x'} &Q'\ar[d]^f\ar[r]^{y'} &B\ar@{=}[d]\\
		A\ar[r]_{fx'} &Q\ar[r]_{y'f^{-1}} &B}
	\]
So $A\stackrel{fx'}\longrightarrow Q\stackrel{y'f^{-1}}\longrightarrow B\stackrel{\delta}\dashrightarrow$ is an $\mathbb{E}$-triangle in $\xi$ by Definition \ref{DJ}. Since $Q$ is quasi-$\xi$-projective, we obtain a short exact sequence
\[
	0\longrightarrow\Mcc(Q,A)\longrightarrow\Mcc(Q,Q)\longrightarrow\Mcc(Q,B)\longrightarrow 0.
\]
For any  morphism $a\in \Mcc(Q',B)$, note  that $af^{-1}$ belongs to $\Mcc(Q,B)$. Thus there is a morphism $b\in\Mcc(Q,Q)$ such that $af^{-1}=y'f^{-1}b$. So we have \[a=y'(f^{-1}bf)=\Mcc(Q',y')(f^{-1}bf).\] This implies that $\Mcc(Q',y')$ is an epimorphism. For any  morphism $c\in \Mcc(Q',A)$ satisfies $\Mcc(Q',x')(c)=x'c=0$. Note that\[ \Mcc(Q,fx')(cf^{-1})=fx'cf^{-1}=0,\]
then $cf^{-1}=0$ since $ \Mcc(Q,fx')$ is a monomorphism. This implies  $c=0$ because $f^{-1}$ is an isomorphism. So $\Mcc(Q',x')$ is also a monomorphism. Moreover $A\stackrel{x'}\longrightarrow Q'\stackrel{y'}\longrightarrow B\stackrel{\delta}\dashrightarrow$  is an $\mathbb{E}$-triangle, then the sequence $(*)$ is exact at $\Mcc(Q',Q')$. Therefore, the sequence $(*)$
\[
	0\rightarrow\Mcc(Q',A)\rightarrow\Mcc(Q',Q')\rightarrow\Mcc(Q',B)\rightarrow 0\quad (*)
\]
is exact. It is enough to say $Q'$ lies in $\QP$.

\end{proof}
\subsection{Some fundamental properties of quasi-$\xi$-projective objects}
The following lemma gives a condition under which an object becomes $\xi$-projective.

\begin{lem}\label{TS}
	 Let $A$ be an object in $\Mcc$, then $A$ is $\xi$-projective if and only if there exists an $\xi$-deflation $p:P\to A$ with $P\in\P$ and $A\oplus P\in\QP$.
\end{lem}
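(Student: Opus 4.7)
The plan is to prove both implications, with the substantive content in the reverse direction.

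For the forward direction, assume $A \in \P$. Using the standing assumption that $\mc{C}$ has enough $\xi$-projectives, there is an $\mathbb{E}$-triangle $K \to P \xrightarrow{p} A \dashrightarrow$ in $\xi$ with $P \in \P$, so $p$ is a $\xi$-deflation. Closure of $\P$ under finite coproducts (immediate from its Hom-lifting definition) together with $\P \subseteq \QP$ (from the remark after the definition) yields $A \oplus P \in \P \subseteq \QP$.

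For the reverse direction, suppose $p \colon P \to A$ is a $\xi$-deflation with $P \in \P$ and $A \oplus P \in \QP$, and fix an $\mathbb{E}$-triangle $K \xrightarrow{k} P \xrightarrow{p} A \dashrightarrow$ in $\xi$. The crux is that quasi-$\xi$-projectivity of $A \oplus P$ requires an $\mathbb{E}$-triangle in $\xi$ with $A \oplus P$ in the middle, whereas the above triangle only has $P$ there. To remedy this, I would take the direct sum with the split $\mathbb{E}$-triangle $A \xrightarrow{1_A} A \to 0 \dashrightarrow$: since $\Delta_0 \subseteq \xi$ and $\xi$ is closed under finite coproducts (Definition \ref{ZL}), this produces an $\mathbb{E}$-triangle
\[
K \oplus A \longrightarrow P \oplus A \xrightarrow{\; p\pi_P \;} A \dashrightarrow
\]
in $\xi$, where $\pi_P \colon P \oplus A \to P$ is the canonical projection. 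Closure of $\QP$ under isomorphism then ensures $P \oplus A \cong A \oplus P \in \QP$.

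Applying quasi-$\xi$-projectivity to this $\mathbb{E}$-triangle, the induced map $\mc{C}(P \oplus A, P \oplus A) \to \mc{C}(P \oplus A, A)$ is surjective. Lifting the canonical projection $\pi_A \colon P \oplus A \to A$ to some $g \colon P \oplus A \to P \oplus A$ with $p\pi_P g = \pi_A$ and setting $s := \pi_P g \iota_A \colon A \to P$ (where $\iota_A$ denotes the canonical inclusion of $A$) gives $ps = \pi_A \iota_A = 1_A$, so $p$ is a retraction. By Lemma \ref{FJ}, the $\mathbb{E}$-triangle $K \to P \to A \dashrightarrow$ splits, so $A$ is a direct summand of $P \in \P$; since $\P$ is closed under direct summands (again immediate from its Hom-lifting characterisation), $A \in \P$, as required.

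The main obstacle is identifying the right auxiliary $\mathbb{E}$-triangle: without the trick of direct-summing the given triangle with a split triangle and using closure of $\QP$ under isomorphism, the quasi-projective hypothesis cannot be brought to bear. Once $P \oplus A$ sits in the middle of an $\mathbb{E}$-triangle ending at $A$, extracting a section of $p$ from the lift of the projection $\pi_A$, and then invoking Lemma \ref{FJ} and closure of $\P$ under direct summands, is routine.
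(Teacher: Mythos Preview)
Your proof is correct and follows essentially the same strategy as the paper: build an $\mathbb{E}$-triangle in $\xi$ with $A\oplus P$ in the middle and $A$ at the end, lift the projection onto $A$ using quasi-$\xi$-projectivity, and extract a section of $p$ to split the original triangle. The only cosmetic difference is in how that auxiliary triangle is produced---you take the coproduct of $K\to P\to A$ with the split triangle $A=A\to 0$ and invoke closure of $\xi$ under finite coproducts, whereas the paper composes the $\xi$-deflation $[0\ 1]\colon A\oplus P\to P$ with $p$ and invokes closure of $\xi$-deflations under composition; the resulting triangles are the same up to swapping the summands.
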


\begin{proof}
	We only prove the ``if'' prat. Since  $p:P\to A$ is a $\xi$-deflation, there exists an $\mathbb{E}$-triangle $ A'\stackrel{x}\longrightarrow P \stackrel{p}\longrightarrow A  \stackrel{\delta}\dashrightarrow$  in $\xi$. Note that the split $\mathbb{E}$-triangle $A \stackrel{\begin{tiny}\begin{bmatrix}
		1 \\
		0
\end{bmatrix}\end{tiny}}{\longrightarrow} A \oplus P \stackrel{\begin{tiny}\begin{bmatrix}
		0&1\end{bmatrix}\end{tiny}}{\longrightarrow} P\dashrightarrow$ is in $\xi$, then the composite morphsim $p\begin{small}\begin{bmatrix}0&1\end{bmatrix}\end{small}=\begin{small}\begin{bmatrix}
			0&p\end{bmatrix}\end{small}$ is  a $\xi$-deflation by \citep[Corollary 3.5]{JDP}. So there is an  $\mathbb{E}$-triangle $X \stackrel{}{\longrightarrow} A \oplus P \stackrel{\begin{tiny}\begin{bmatrix}
		0&p\end{bmatrix}\end{tiny}}{\longrightarrow} P\dashrightarrow$  in $\xi$ by definition of $\xi$-deflation. There exists a morphism $f\in\Mcc(A\oplus P,A\oplus P)$ such that the diagram 
\[
	\xymatrix{
		&&A\oplus P\ar[d]^{\begin{tiny}\begin{bmatrix}
			0&1\end{bmatrix}\end{tiny}}\ar[dl]_f\\
		X\ar[r]&A\oplus P\ar[r]_{\begin{tiny}\begin{bmatrix}
			0&p\end{bmatrix}\end{tiny}}&A\ar@{-->}[r] & 
	}
\]
commutes by the quasi-$\xi$-projectivity of $A\oplus P$. Then we have 
\[
1_A=\begin{tiny}\begin{bmatrix}1_A&0\end{bmatrix}\end{tiny}\begin{tiny}\begin{bmatrix}
	1_A \\0\end{bmatrix}\end{tiny}=\begin{tiny}\begin{bmatrix}0&p\end{bmatrix}\end{tiny}f\begin{tiny}\begin{bmatrix}
		1_A \\0\end{bmatrix}\end{tiny}=p\qty(\begin{tiny}\begin{bmatrix}0&1\end{bmatrix}\end{tiny}f\begin{tiny}\begin{bmatrix}
			1 \\0\end{bmatrix}\end{tiny}).
\]
Thus $p$ is a retraction and the $\mathbb{E}$-triangle $A'\stackrel{x}{\longrightarrow} P \stackrel{p}{\longrightarrow} A\dashrightarrow$ is split. So $A\oplus A'\backsimeq  P$. Since  subcategory $\P$ closed under the direct summands, $A$ is $\xi$-projective.
\end{proof}

\begin{defn}
 A $\xi$-deflation $f:B\to C$ is said to be essential if for any morphsim $g:A\to B$ is a $\xi$-deflation provided that the composite morphsim $fg$ is a $\xi$-deflation. 
\end{defn}
 
Now we introduce the notion of the $\xi$-projective cover and quasi-$\xi$-projective cover.

\begin{defn}
Let  $f:X\to A$ be a $\xi$-deflation in $\mathcal{C}$.

	(1) The morphism $f$ is called a $\xi$-projective cover of $A$ if $X$ is $\xi$-projective  and $f$ is a  essential  $\xi$-deflation.

	(2) The morphism $f$ is called a quasi-$\xi$-projective cover of $A$ if $X$ is quasi-$\xi$-projective and $f$ is a  essential  $\xi$-deflation.
\end{defn}
Dually, we can define the $\xi$-injective envelope and the quasi-$\xi$-injective envelope of any object in $\mathcal{C}$.

\begin{prop}
	Let $P$ be a $\xi$-projective object,then the following are equivalent for a $\xi$-deflation $f:P\to A$.

	(1) The morphism $f$ is a $\xi$-projective cover of $A$.

	(2) Any endomorphism $x:P\to P$ satisfying $fx=f$ is an isomorphism.
\end{prop}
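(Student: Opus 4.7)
The plan is to prove both implications by repeatedly exploiting the lifting property of the $\xi$-projective object $P$ together with Lemma \ref{FJ} (splitting via sections/retractions), using essentialness in one direction and the endomorphism hypothesis in the other.

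For (1) $\Rightarrow$ (2), suppose $x \in \Mcc(P,P)$ satisfies $fx = f$. Since $f$ is an essential $\xi$-deflation and $fx = f$ is a $\xi$-deflation, essentialness forces $x$ to be a $\xi$-deflation, realized by some $\mathbb{E}$-triangle $K \stackrel{}{\longrightarrow} P \stackrel{x}{\longrightarrow} P \dashrightarrow$ in $\xi$. Applying $\Mcc(P,-)$ and using $\xi$-projectivity of $P$ produces $s \in \Mcc(P,P)$ with $xs = 1_P$. Then $fs = (fx)s = f(xs) = f$, so essentialness applies once more to $s$, yielding that $s$ itself is a $\xi$-deflation; the same lifting argument then produces $t \in \Mcc(P,P)$ with $st = 1_P$. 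Combining: $t = (xs)t = x(st) = x$, so $sx = 1_P$, and together with $xs = 1_P$ this shows $x$ is an isomorphism.

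For (2) $\Rightarrow$ (1), I need to show $f$ is essential, i.e., that any $g : B \to P$ with $fg$ a $\xi$-deflation is itself a $\xi$-deflation. Fix an $\mathbb{E}$-triangle $N \stackrel{}{\longrightarrow} B \stackrel{fg}{\longrightarrow} A \dashrightarrow$ in $\xi$. Since $P$ is $\xi$-projective, the map $\Mcc(P,fg) : \Mcc(P,B) \to \Mcc(P,A)$ is surjective, so there exists $h : P \to B$ with $(fg)h = f$. Setting $x := gh$, we have $fx = f$, so hypothesis (2) yields that $x = gh$ is an isomorphism. Let $v := h x^{-1}$; then $gv = 1_P$, so $g$ is a split epimorphism. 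Because $1_P$ is trivially a deflation (via $0 \to P \stackrel{1_P}{\longrightarrow} P \dashrightarrow 0$), Condition (WIC)(2) applied to $1_P = gv$ gives that $g$ is a deflation; let $K \stackrel{}{\longrightarrow} B \stackrel{g}{\longrightarrow} P \stackrel{\delta'}{\dashrightarrow}$ realize this. Since $g$ is a retraction, the ``in particular'' part of Lemma \ref{FJ} forces $\delta' = 0$, so this triangle is split and hence lies in $\Delta_0 \subseteq \xi$. Therefore $g$ is a $\xi$-deflation.

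The main obstacle is the concluding step of (2) $\Rightarrow$ (1): one must upgrade the split epimorphism $g$ produced from the hypothesis into a genuine $\xi$-deflation. Without some idempotent-completeness-type assumption this need not be automatic, which is precisely where Condition (WIC) enters to manufacture an $\mathbb{E}$-triangle around $g$, after which Lemma \ref{FJ} is used to certify that the triangle is split and thus in $\xi$. Everything else is bookkeeping with the lifting property of $\xi$-projectives.
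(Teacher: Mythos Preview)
Your proof is correct. For (1)$\Rightarrow$(2) it is essentially identical to the paper's argument (same lifting-twice trick, with $s,t$ in place of the paper's $x',x''$).

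For (2)$\Rightarrow$(1) you take a somewhat different and in fact more complete route. The paper only checks essentialness against endomorphisms $x:P\to P$ with $fx$ a $\xi$-deflation, and after producing $x''$ with $(xx')x''=1_P$ it concludes that $x$ is a $\xi$-deflation by invoking \citep[Proposition 4.13(2)]{JDP}. You instead handle an arbitrary $g:B\to P$ with $fg$ a $\xi$-deflation (which is what the definition of ``essential'' actually demands), lift $f$ through $fg$ to get $h$ with $f(gh)=f$, apply (2) to $gh$, and thereby produce a section of $g$; you then promote the resulting split epimorphism to a $\xi$-deflation by using Condition~(WIC) to obtain an $\mathbb{E}$-triangle and Lemma~\ref{FJ} to certify it is split and hence in $\Delta_0\subseteq\xi$. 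Your approach buys full generality and is self-contained within the paper's own toolkit; the paper's approach is shorter but leans on an external result and, as written, leaves the case of non-endomorphism $g$ implicit.
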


\begin{proof}
	(1) $\Rightarrow $ (2): Let $x:P\to P$ be an endomorphism satisfying $fx=f$, then $x$ is $\xi$-deflation since  $f$ is  essential. Thus there exists an $\mathbb{E}$-triangle  $X\longrightarrow P \stackrel{x}{\longrightarrow} P\dashrightarrow$ in $\xi$. Since $P\in\P$, we obtain a short exact sequence
		\[
  0\longrightarrow\mathcal{C}(P,X)\longrightarrow \mathcal{C}(P,P)\stackrel{\mathcal{C}(P,x)}\longrightarrow\mathcal{C}(P,P)\longrightarrow0.
  \]
  So there is a morphism $x'\in\mathcal{C}(P,P)$ such that $xx' = 1_P$. Note that  $f$ is essential and $fx'=(fx)x'=f(xx')=f$, therefore $x'$ is a $\xi$-deflation. Similarly,  there is a morphism $x''\in\mathcal{C}(P,P)$ such that $x'x'' = 1_P$. Since
\[
x=x(x'x'')=(xx')x''=x'',
\]
we obtain $xx'=x'x=1_P$. This implies that $x$ is an isomorphism.

(2) $\Rightarrow $ (1): Let $x:P\to P$ be a morphism such that $fx$ is a $\xi$-deflation, then there exists an $\mathbb{E}$-triangle  $Y\longrightarrow P \stackrel{fx}{\longrightarrow} A\dashrightarrow$ in $\xi$. Recall that $P$ is $\xi$-projective, we obtain a exact sequence
\[
0\longrightarrow\mathcal{C}(P,Y)\longrightarrow \mathcal{C}(P,P)\stackrel{\mathcal{C}(P,fx)}\longrightarrow\mathcal{C}(P,A)\longrightarrow0.
\]
There is a morphism $x'\in\mathcal{C}(P,P)$ such that $(fx)x' = f$ because of $f\in\mathcal{C}(P,A)$. Thus the composite morphism $xx'$ is  an isomorphism. So there exists a morphism $x''\in\mathcal{C}(P,P)$ suth that $(xx')x''=1_P$. Since $1_P$ is a $\xi$-deflation, $xx'$ and  $x$ are both $\xi$-deflations by \citep[Proposition 4.13(2)]{JDP}. This is enough to say $f$ is a $\xi$-projective cover of $A$.
\end{proof}

\begin{cor}
	Let $f:P\to A$ and $f':P'\to A$ be two $\xi$-projective covers of an object $A$, then there is an isomorphism $\varphi: P\to P'$ suth that $f=f'\varphi$.
\end{cor}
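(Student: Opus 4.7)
The plan is to build $\varphi$ and a candidate inverse $\varphi'$ by two lifting steps, and then invoke the characterization of $\xi$-projective covers from the previous proposition to force both composites $\varphi'\varphi$ and $\varphi\varphi'$ to be isomorphisms. From this the two-sided invertibility of $\varphi$ is immediate.

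First I would use $\xi$-projectivity to get the morphism $\varphi$. Since $f':P'\to A$ is a $\xi$-deflation, there is an $\mathbb{E}$-triangle $K'\longrightarrow P'\stackrel{f'}{\longrightarrow}A\dashrightarrow$ in $\xi$. Applying $\mathcal{C}(P,-)$ and using that $P\in\mathcal{P}(\xi)$ (since $f:P\to A$ is a $\xi$-projective cover) produces a short exact sequence
\[
0\longrightarrow\mathcal{C}(P,K')\longrightarrow\mathcal{C}(P,P')\stackrel{\mathcal{C}(P,f')}{\longrightarrow}\mathcal{C}(P,A)\longrightarrow 0,
\]
so the morphism $f\in\mathcal{C}(P,A)$ lifts to some $\varphi\in\mathcal{C}(P,P')$ with $f'\varphi=f$. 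Exchanging the roles of $P$ and $P'$ (using that $P'\in\mathcal{P}(\xi)$ and that $f$ is a $\xi$-deflation) yields a morphism $\varphi'\in\mathcal{C}(P',P)$ with $f\varphi'=f'$.

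Next I would show both composites are isomorphisms. Observe $f(\varphi'\varphi)=(f\varphi')\varphi=f'\varphi=f$, so $\varphi'\varphi:P\to P$ is an endomorphism making $f(\varphi'\varphi)=f$. Since $f$ is a $\xi$-projective cover of $A$ and $P\in\mathcal{P}(\xi)$, the preceding proposition (the implication $(1)\Rightarrow(2)$) forces $\varphi'\varphi$ to be an isomorphism of $P$. Symmetrically, $f'(\varphi\varphi')=(f'\varphi)\varphi'=f\varphi'=f'$ together with the same proposition applied to the $\xi$-projective cover $f':P'\to A$ forces $\varphi\varphi'$ to be an isomorphism of $P'$.

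Finally, $\varphi'\varphi$ invertible gives $\varphi$ a left inverse, and $\varphi\varphi'$ invertible gives $\varphi$ a right inverse, so $\varphi$ is itself an isomorphism with $f'\varphi=f$. The argument is essentially self-contained once the previous proposition is granted, and the only subtlety to watch is that both liftings use $\xi$-projectivity on the correct side and that the definition of a $\xi$-projective cover includes being a $\xi$-deflation (so both $\mathbb{E}$-triangles needed for lifting indeed lie in $\xi$); beyond that no obstacle is expected.
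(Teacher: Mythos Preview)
Your proof is correct and follows the standard argument. The paper itself does not spell out a proof of this corollary, treating it as an immediate consequence of the preceding proposition; your argument is exactly the expected one---lift along each cover using $\xi$-projectivity, then apply the characterization $(1)\Rightarrow(2)$ of the proposition to conclude that both composites $\varphi'\varphi$ and $\varphi\varphi'$ are isomorphisms, whence $\varphi$ is an isomorphism.
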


\begin{rem}
The $\xi$-projective cover of any object in extriangulated category $(\mathcal{C},\mathbb{E},\mathfrak{s})$  is  unique up to isomorphism.
\end{rem}

\begin{prop}
Let $P$ is a $\xi$-projective object and $f:Q\to P$ is a quasi-$\xi$-projective cover of $P$, then $Q$ is isomorphism to $P$. 
\end{prop}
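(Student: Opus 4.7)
The plan is to mimic the proof of the earlier proposition characterizing $\xi$-projective covers, but now leveraging quasi-$\xi$-projectivity of $Q$ in place of $\xi$-projectivity of the covering object. First, since $f:Q\to P$ is a $\xi$-deflation there is an $\mathbb{E}$-triangle $A\stackrel{x}{\to} Q\stackrel{f}{\to} P\stackrel{\delta}{\dashrightarrow}$ in $\xi$. Because $P\in\P$, the identity $1_P$ lifts through $f$, which forces $\delta=0$; equivalently, the triangle splits and one obtains a section $s:P\to Q$ with $fs=1_P$.

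Next I would pass to the endomorphism $sf:Q\to Q$. Since $f\circ(sf)=f$ is a $\xi$-deflation and $f$ is essential, $sf$ is itself a $\xi$-deflation, giving an $\mathbb{E}$-triangle $K\to Q\stackrel{sf}{\to}Q\dashrightarrow$ in $\xi$. Applying quasi-$\xi$-projectivity of $Q$, the induced map $\mc{C}(Q,sf):\mc{C}(Q,Q)\to\mc{C}(Q,Q)$ is surjective, so one lifts $1_Q$ to some $y:Q\to Q$ with $(sf)y=1_Q$. Composing on the left by $f$ and using $fs=1_P$ yields $fy=f$, so essentiality of $f$ again forces $y$ to be a $\xi$-deflation. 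Repeating the same quasi-$\xi$-projectivity argument for the resulting triangle $K'\to Q\stackrel{y}{\to}Q\dashrightarrow$ in $\xi$ produces $y':Q\to Q$ with $yy'=1_Q$.

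Finally, from $(sf)y=1_Q$ together with $yy'=1_Q$ one substitutes to obtain $sf=(sf)(yy')=y'$, hence also $y(sf)=yy'=1_Q$; so $sf$ is an isomorphism with two-sided inverse $y$. In particular $s(fy)=(sf)y=1_Q$ exhibits $fy$ as a right inverse of $s$, while $fs=1_P$ provides a left inverse; uniqueness of inverses then forces $fy=f$ and $sf=1_Q$, making $s$ and $f$ mutually inverse, so $f:Q\to P$ is an isomorphism and $Q\simeq P$. The main obstacle is the repeated use of essentiality of $f$ to promote the algebraic identities $f(sf)=f$ and $fy=f$ into genuine $\xi$-deflation statements, since this is precisely what permits quasi-$\xi$-projectivity of $Q$ to be invoked on the two endomorphism triangles; aside from this, the manipulations are purely formal and parallel the $\xi$-projective-cover argument.
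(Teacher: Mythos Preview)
Your proof is correct and follows essentially the same strategy as the paper: obtain a section $s$ of $f$ from $\xi$-projectivity of $P$, promote the endomorphism $sf$ to a $\xi$-deflation via essentiality, and then iterate quasi-$\xi$-projectivity of $Q$ to force $sf$ to be invertible. The only cosmetic differences are that the paper first shows the section itself is a $\xi$-deflation (using essentiality on $fs=1_P$) and then invokes closure of $\xi$-deflations under composition to get that $sf$ is one, whereas you go straight to $f(sf)=f$; and your two-step lifting $(sf)y=1_Q$, $yy'=1_Q$, $sf=y'$ is a cleaner way to close the argument than the paper's corresponding computation.
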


\begin{proof}
 There is an $\mathbb{E}$-triangle  $X\longrightarrow Q \stackrel{f}{\longrightarrow} P\dashrightarrow$ in $\xi$ since the morphism  $f:Q\to P$ is a quasi-$\xi$-projective cover of $P$. Note that $P$ is $\xi$-projective, So there exists a exact sequence
\[
0\longrightarrow\mathcal{C}(P,X)\longrightarrow \mathcal{C}(P,Q)\stackrel{\mathcal{C}(P,f)}\longrightarrow\mathcal{C}(P,P)\longrightarrow0
\]
in $\mathbf{Ab}$. Thus we obtain a morphism $f'\in\mathcal{C}(P,Q)$ suth that $ff'=1_P$. Obviously, the morphism $1_P$ is a $\xi$-deflation. So the morphism  $f'$ is a $\xi$-deflation by the essentiality of $f$. Then there is an  $\mathbb{E}$-triangle  $X'\longrightarrow P \stackrel{f'}{\longrightarrow} Q\dashrightarrow$ in $\xi$. Since the class of $\xi$-deflation is closed under composition (see \citep[Corollary 3.5]{JDP}), the composite morphsim $f'f\in\mathcal{C}(Q,Q)$ is a $\xi$-deflation, i.e. there is an  $\mathbb{E}$-triangle  $Y\longrightarrow Q \stackrel{f'f}{\longrightarrow} Q\dashrightarrow$ in $\xi$. Note that $Q$ is quasi-$\xi$-projective, there exists a exact sequence
\[
0\longrightarrow\mathcal{C}(Q,X)\longrightarrow \mathcal{C}(Q,Q)\longrightarrow\mathcal{C}(Q,Q)\longrightarrow0.
\]
So we can obtain a morphism $g\in\mathcal{C}(Q,Q)$ such that $(f'f)g=1_Q$, i.e. there is the following commutative diagram in $\mathcal{C}$:
\[
	\xymatrix{
		&&Q\ar[d]^{1_Q}\ar[dl]_{g}\\
		Y\ar[r]&Q\ar[r]_{f'f}&Q\ar@{-->}[r] & 
	}
\]
Similarly,there is a morphisms $x\in\mathcal{C}(Q,Q)$ such that $g=(f'f)x$, 
thus
\[
1_Q=(f'f)g=(f'f)(f'f)x=(f'f)x=g,
\]
it implies that $f'f=1_Q$. This is enough to say $f:Q\to P$ is an isomorphism, so $Q\simeq P$.
\end{proof}
\begin{rem}
	The quasi-$\xi$-projective cover of an $\xi$-projective object in extriangulated category $(\mathcal{C},\mathbb{E},\mathfrak{s})$  is  unique up to isomorphism.
\end{rem}

\begin{lem}\label{Iso}
Let $f:A\to B$ is an isomorphism, then $f$ is both a $\xi$-inflation and a $\xi$-deflation.
\end{lem}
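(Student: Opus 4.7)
The plan is to exhibit the isomorphism $f:A\to B$ as a leg in a split $\mathbb{E}$-triangle on each side by using the equivalence of sequences from Definition \ref{DJ}, and then to invoke the fact that $\xi$ is closed under isomorphisms and contains $\Delta_0$ (Definition \ref{ZL}). The strategy has two symmetric halves and each relies on the same observation: an isomorphism on the middle object preserves both the realization class and membership in $\xi$.

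To show $f$ is a $\xi$-inflation, I would start from the split $\mathbb{E}$-triangle $A\stackrel{1_A}\longrightarrow A\longrightarrow 0\dashrightarrow$, which lies in $\Delta_0\subseteq \xi$. Then the diagram
\[
\xymatrix{
A\ar@{=}[d]\ar[r]^{1_A} & A\ar[d]^f\ar[r] & 0\ar@{=}[d]\\
A\ar[r]^f & B\ar[r] & 0}
\]
commutes with $f$ an isomorphism, so by Definition \ref{DJ} the sequence $A\stackrel{f}\longrightarrow B\longrightarrow 0$ represents the same equivalence class as $A\stackrel{1_A}\longrightarrow A\longrightarrow 0$, hence is itself a conflation realizing the split $\mathbb{E}$-extension $0\in\mathbb{E}(0,A)$. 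The triple $(1_A,f,1_0)$ is then an isomorphism of $\mathbb{E}$-triangles, and since $\xi$ is closed under isomorphisms, the $\mathbb{E}$-triangle $A\stackrel{f}\longrightarrow B\longrightarrow 0\dashrightarrow$ lies in $\xi$, witnessing that $f$ is a $\xi$-inflation.

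For the dual statement, I would apply the same idea to the split $\mathbb{E}$-triangle $0\longrightarrow B\stackrel{1_B}\longrightarrow B\dashrightarrow$ in $\Delta_0\subseteq \xi$. Using the isomorphism $f:A\to B$ in the middle position, the diagram
\[
\xymatrix{
0\ar@{=}[d]\ar[r] & A\ar[d]^f\ar[r]^f & B\ar@{=}[d]\\
0\ar[r] & B\ar[r]^{1_B} & B}
\]
commutes, so $0\longrightarrow A\stackrel{f}\longrightarrow B$ is equivalent to $0\longrightarrow B\stackrel{1_B}\longrightarrow B$, and thus is a conflation realizing $0\in\mathbb{E}(B,0)$. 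By closure of $\xi$ under isomorphism of $\mathbb{E}$-triangles (via $(1_0,f,1_B)$), the resulting $\mathbb{E}$-triangle lies in $\xi$, so $f$ is a $\xi$-deflation.

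There is no real obstacle here; the only thing to be careful about is to cite the right pieces — the definition of equivalence of sequences (Definition \ref{DJ}), the inclusion $\Delta_0\subseteq\xi$ and the closure of $\xi$ under isomorphism both coming from Definition \ref{ZL} — rather than trying to invoke base change or cobase change, which would produce sequences of the wrong shape.
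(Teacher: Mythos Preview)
Your proof is correct and follows essentially the same approach as the paper: both exhibit $A\stackrel{f}\longrightarrow B\longrightarrow 0$ and $0\longrightarrow A\stackrel{f}\longrightarrow B$ as equivalent (via the isomorphism $f$ in the middle) to the canonical split sequences, hence as split $\mathbb{E}$-triangles lying in $\Delta_0\subseteq\xi$. Your write-up is in fact slightly more careful in citing Definition~\ref{DJ} and Definition~\ref{ZL} explicitly, whereas the paper leaves the appeal to closure under isomorphism implicit.
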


\begin{proof}
	It is easy to check that the following are two commutative diagram.
\[
		\xymatrix{
			A  \ar@{=}[d] \ar[r]^{1} & A  \ar[r]\ar[d]^{f^{-1}} & 0 \ar@{=}[d]  &&& 0  \ar@{=}[d] \ar[r] & B  \ar[r]^{1}\ar[d]^{f^{-1}} & B \ar@{=}[d]\\
			A\ar[r]^{f} & B\ar[r] & 0 &&&  0\ar[r] & A\ar[r]^{f}& B }
	\] 
Hence $A\stackrel{f}{\longrightarrow} B \longrightarrow 0\dashrightarrow$ and  $0\longrightarrow A\stackrel{f}{\longrightarrow} B\dashrightarrow$ are two split $\mathbb{E}$-triangles. So $f$ is both a $\xi$-inflation and a $\xi$- deflation.
\end{proof}

\begin{prop}\label{FDe}
	Let $A\stackrel{x}{\longrightarrow} B \stackrel{y}{\longrightarrow} C\stackrel{\delta}{\dashrightarrow}$ and  $A'\stackrel{x'}{\longrightarrow} B' \stackrel{y'}{\longrightarrow} C\stackrel{\delta'}{\dashrightarrow}$ be two $\mathbb{E}$-triangles in $\mathcal{C}$ where $x'$ is a monomorphism. And suppose that the following  diagram of  $\mathbb{E}$-triangles with $a_{*}\delta=\delta'$
	\[
		\xymatrix{
			A \ar[d]^{a} \ar[r]^{x} & B  \ar[r]^{y}\ar[d]^{b} & C \ar@{=}[d] \ar@{-->}[r]^{\delta} &  \\
			A'\ar[r]^{x'} & B' \ar[r]^{y'} & C' \ar@{-->}[r]^{\delta'} &    }
	\] 
is commutative.   If  $b$ is a deflation, then so is $a$.
\end{prop}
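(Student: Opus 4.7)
The plan is to exhibit $a$ as the right-hand morphism of an $\mathbb{E}$-triangle $K \to A \xrightarrow{a} A' \dashrightarrow$, where $K$ is the kernel of the deflation $b$, and thereby conclude that $a$ is a deflation. First I fix an $\mathbb{E}$-triangle $K \xrightarrow{k} B \xrightarrow{b} B' \dashrightarrow^{\eta}$ realizing the deflation $b$.

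The key step is to apply the dual octahedral axiom $(\mathrm{ET4})^{\mathrm{op}}$ to the factorization $y = y' \circ b$ of the deflation $y$ as the composition of the two deflations $b$ and $y'$. This produces a commutative $3 \times 3$-diagram
\[
\xymatrix{
K \ar@{=}[r] \ar[d]^{k_0} & K \ar[d]^{k} & \\
\widetilde{A} \ar[d]^{\bar a} \ar[r]^{\widetilde{x}} & B \ar[d]^{b} \ar[r]^{y} & C \ar@{=}[d] \\
A' \ar[r]^{x'} & B' \ar[r]^{y'} & C
}
\]
in which the middle row $\widetilde A \xrightarrow{\widetilde x} B \xrightarrow{y} C \dashrightarrow$ and the left column $K \xrightarrow{k_0} \widetilde A \xrightarrow{\bar a} A' \dashrightarrow$ are $\mathbb{E}$-triangles. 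Since both $A \xrightarrow{x} B \xrightarrow{y} C$ and $\widetilde A \xrightarrow{\widetilde x} B \xrightarrow{y} C$ are $\mathbb{E}$-triangles sharing the same deflation $y$, applying $(\mathrm{ET3})^{\mathrm{op}}$ to the identity square $(1_B, 1_C)$ in each direction yields morphisms $\phi: A \to \widetilde A$ and $\psi: \widetilde A \to A$ compatible with $x$ and $\widetilde x$; a weak-five-lemma argument leveraging the WIC condition then shows these are mutually inverse, giving an identification $A \cong \widetilde A$.

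Rewriting the diagram through this isomorphism, the bottom-left square becomes $x' \bar a = b x$. Combined with the hypothesis $x' a = b x$, and using that $x'$ is a monomorphism, this forces $\bar a = a$. Therefore $a$ is the right morphism of the $\mathbb{E}$-triangle $K \xrightarrow{k_0} A \xrightarrow{a} A' \dashrightarrow$, so $a$ is a deflation. The main obstacle is the identification of the object $\widetilde A$ produced by $(\mathrm{ET4})^{\mathrm{op}}$ with our given $A$; once this is addressed via $(\mathrm{ET3})^{\mathrm{op}}$ and WIC, the monomorphism hypothesis on $x'$ promptly yields the conclusion.
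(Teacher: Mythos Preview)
Your approach is essentially the same as the paper's: apply $(\mathrm{ET4})^{\mathrm{op}}$ to the factorization $y = y' b$ of deflations, identify the new leftmost object with $A$ via $(\mathrm{ET3})^{\mathrm{op}}$, and then use that $x'$ is a monomorphism to conclude that $a$ coincides with the deflation produced by the diagram.

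One small remark: your invocation of the WIC condition to prove $A \cong \widetilde A$ is unnecessary. The paper simply cites \cite[Corollary~3.6]{HY}, which says that in any morphism of $\mathbb{E}$-triangles, if two of the three component maps are isomorphisms then so is the third; applying this to $(h,1_B,1_C)$ immediately gives that $h$ is an isomorphism, with no appeal to weak idempotent completeness. Once that is in place, the paper writes $a$ as the composite of the deflation $\widetilde A \to A'$ with the isomorphism $h$ (using Lemma~\ref{Iso} and the fact that deflations are closed under composition), which is equivalent to your formulation of $a$ as the right map of an $\mathbb{E}$-triangle.
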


\begin{proof}
If  $b$ is a deflation, then there is an  $\mathbb{E}$-triangle	 $X\longrightarrow B \stackrel{b}{\longrightarrow} B'\stackrel{\kappa }{\dashrightarrow}$ in $\mathcal{C}$. By $\rm(ET4)^{op}$, there exists following commutative diagram with $\mathbb{E}$-triangles and $bf=x'y.$
\[
		\xymatrix{
			X\ar@{=}[d] \ar[r] & D  \ar[r]^{y}\ar[d]^{f} & A' \ar[d]^{x'} \ar@{-->}[r] &  \\
			X\ar[r] & B\ar[d]^{g} \ar[r]^{b} & B'\ar[d]^{y'} \ar@{-->}[r]^{\kappa} & \\
			& C\ar@{=}[r]\ar@{-->}[d]^{\theta}&C\ar@{-->}[d]^{\delta'} \\
			& & &  }
	\] 
Since $g=y'b=y$, there is a morphism $h\in\mathcal{C}(A,D)$ such that the triplet $(a,1,1)$ realizes $(h,1):\delta\to \theta$ as in following commutative diagram by $\rm(ET3)^{op}$. 
\[
		\xymatrix{
			A \ar@{-->}[d]^{h} \ar[r]^{x} & B  \ar[r]^{y}\ar@{=}[d] & C \ar@{=}[d] \ar@{-->}[r]^{\delta} &  \\
			D\ar[r]^{f} & B \ar[r]^{g} & C \ar@{-->}[r]^{\theta} & }
	\] 
So $h$ is an isomorphism by \citep[Corollary 3.6]{HY}. Note that \[x'a=bx=bfh=x'yh\] and $x'$ is a monomorphism, thus we have $a=yh$. It is enough to say $a$ is a deflation by Lemma \ref{Iso} and \citep[Remark 2.16]{HY}.
\end{proof}

\begin{lem}\label{MDe}
	Let $P$ be a $\xi$-projective object  and $f:A\to A'$ be a $\xi$-deflation in $\mathcal{C}$. If there is a following commutative diagram with two split $\mathbb{E}$-triangles 
	\[
		\xymatrix{
			A \ar[d]^{f} \ar[r]^{\begin{tiny}\begin{bmatrix}
				1\\0
				\end{bmatrix}\end{tiny}\quad} & A\oplus P  \ar[r]^{\quad \begin{tiny}\begin{bmatrix}
					0&1
					\end{bmatrix}\end{tiny}}\ar[d]^{\begin{tiny}\begin{bmatrix}
						f & x\\0 & 1
						\end{bmatrix}\end{tiny}} & P \ar@{=}[d] \ar@{-->}[r]^{0} &  \\
			A'\ar[r]_{\begin{tiny}\begin{bmatrix}
				1\\0
				\end{bmatrix}\end{tiny}\quad} & A'\oplus P \ar[r]_{\quad\begin{tiny}\begin{bmatrix}
					0&1
					\end{bmatrix}\end{tiny}} & P \ar@{-->}[r]^{0} & }
	\] 
then the morphism $\begin{tiny}\begin{bmatrix}
	f & x\\0 & 1
	\end{bmatrix}\end{tiny}\in\mathcal{C}(A\oplus P,A'\oplus P)$ is also a $\xi$-deflation in $\mathcal{C}$.
\end{lem}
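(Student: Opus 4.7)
\medskip
\noindent\textbf{Proof plan.} The plan is to factor the matrix $\begin{bmatrix} f & x\\ 0 & 1 \end{bmatrix}$ as a composition of two morphisms that are each manifestly $\xi$-deflations, and then invoke the closure of $\xi$-deflations under composition (\citep[Corollary 3.5]{JDP}). Specifically, I would use the elementary matrix identity
\[
\begin{bmatrix} f & x\\ 0 & 1 \end{bmatrix}
\;=\;
\begin{bmatrix} 1_{A'} & x\\ 0 & 1_P \end{bmatrix}
\circ
\begin{bmatrix} f & 0\\ 0 & 1_P \end{bmatrix},
\]
so that the problem reduces to showing that each of the two factors is a $\xi$-deflation.

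For the second factor, $\begin{bmatrix} f & 0\\ 0 & 1_P \end{bmatrix} = f \oplus 1_P$. Since $f$ is a $\xi$-deflation, there is an $\mathbb{E}$-triangle $K \to A \xrightarrow{f} A' \dashrightarrow$ in $\xi$, and the split $\mathbb{E}$-triangle $0 \to P \xrightarrow{1_P} P \dashrightarrow$ lies in $\Delta_0 \subseteq \xi$ by Definition \ref{ZL}(1). Their coproduct yields the $\mathbb{E}$-triangle $K \to A\oplus P \xrightarrow{f\oplus 1_P} A'\oplus P \dashrightarrow$ in $\xi$, again by Definition \ref{ZL}(1) (closure under finite coproducts), so $f \oplus 1_P$ is a $\xi$-deflation.

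For the first factor, the morphism $\begin{bmatrix} 1_{A'} & x\\ 0 & 1_P \end{bmatrix}$ has inverse $\begin{bmatrix} 1_{A'} & -x\\ 0 & 1_P \end{bmatrix}$, so it is an isomorphism of $A'\oplus P$, and hence is a $\xi$-deflation by Lemma \ref{Iso}. Composing the two $\xi$-deflations using \citep[Corollary 3.5]{JDP} then gives that $\begin{bmatrix} f & x\\ 0 & 1 \end{bmatrix}$ is a $\xi$-deflation, as desired.

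There is essentially no obstacle once the factorization is spotted; the only thing worth being careful about is that $\xi$ (not just $\mathbb{E}$) is closed under the relevant operations (coproducts, containing split triangles, and composition of deflations), but all three of these are recorded above. Note in particular that the hypothesis about the displayed commutative diagram with $\delta = 0 = \delta'$ is not really used beyond fixing the matrix form of the middle vertical morphism; the proof only uses that $f$ is a $\xi$-deflation and that the morphism has the specified block-triangular shape.
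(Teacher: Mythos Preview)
Your proof is correct, and it takes a genuinely different route from the paper's. The paper also starts from the coproduct $\mathbb{E}$-triangle $X\oplus 0 \to A\oplus P \xrightarrow{f\oplus 1} A'\oplus P \dashrightarrow$ in $\xi$, but then it invokes the $\xi$-projectivity of $P$ to lift $-x$ along $f$, obtaining $y\in\mathcal{C}(P,A)$ with $fy=-x$; the automorphism $\begin{bmatrix}1&y\\0&1\end{bmatrix}$ of the \emph{domain} $A\oplus P$ then carries the coproduct triangle to one whose deflation is $\begin{bmatrix}f&x\\0&1\end{bmatrix}$, so that the latter sits in an $\mathbb{E}$-triangle in $\xi$ with the same extension class $\delta\oplus 0$. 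By contrast, you factor through the automorphism $\begin{bmatrix}1&x\\0&1\end{bmatrix}$ of the \emph{codomain} $A'\oplus P$ and appeal to closure of $\xi$-deflations under composition. Your argument is shorter and, as you note, does not actually use that $P$ is $\xi$-projective; the paper's argument, on the other hand, identifies the full $\mathbb{E}$-triangle (kernel and class) explicitly, which is a bit more information than the lemma asks for.
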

	
\begin{proof}
	Note that  $f$ is a $\xi$-deflation, then there exists an $\mathbb{E}$-triangle $X\stackrel{g}{\longrightarrow} A \stackrel{f}{\longrightarrow} A'\stackrel{\delta}{\dashrightarrow}$ in $\xi$. Thus  $X\oplus 0\stackrel{\begin{tiny}\begin{bmatrix}
		g & 0\\ 0 & 0\end{bmatrix}\end{tiny}}{\longrightarrow} A\oplus P \stackrel{\begin{tiny}\begin{bmatrix}
			f & 0\\ 0 & 1\end{bmatrix}\end{tiny}}{\longrightarrow} A'\oplus P \stackrel{\delta\oplus 0}{\dashrightarrow}$ is a  $\mathbb{E}$-triangle in $\xi$, since  $\xi$ is closed under finite coproducts. 	Since $P$ lies in $\P$, we have a exact sequence like
	\[0\longrightarrow\Mcc((P,X)\stackrel{\Mcc(P,g)}\longrightarrow \Mcc((P,A)\stackrel{\Mcc((P,f)}\longrightarrow\Mcc((P,A')\longrightarrow0.\]
	Therefore, for any morphism $x\in\mathcal{C}(P,A')$, there is a morphism $y\in\mathcal{C}(P,A)$  such that $-x=fy$. Consider the following diagram
\[
		\xymatrix{
			X\oplus 0 \ar@{=}[d]  \ar[r]^{\quad\begin{tiny}\begin{bmatrix}
				g & 0\\ 0 & 0\end{bmatrix}\end{tiny}\quad} & A\oplus P  \ar[r]^{\begin{tiny}\begin{bmatrix}
					f & 0\\ 0 & 1\end{bmatrix}\end{tiny}}\ar[d]^{\begin{tiny}\begin{bmatrix}
						1 & y\\0 & 1
						\end{bmatrix}\end{tiny}} & A'\oplus P \ar@{=}[d]   \\
			X\oplus 0 \ar[r]_{\quad\begin{tiny}\begin{bmatrix}
				g & 0\\ 0 & 0\end{bmatrix}\end{tiny}\quad} & A\oplus P \ar[r]_{\begin{tiny}\begin{bmatrix}
					f & x\\0 & 1
					\end{bmatrix}\end{tiny}} & A'\oplus P }
	\] 
Obviously, it is commutative and the morphsim $\begin{tiny}\begin{bmatrix}1 & y\\0 & 1\end{bmatrix}\end{tiny}$ is an isomorphism which implies that $X\oplus 0\stackrel{\begin{tiny}\begin{bmatrix}
		g & 0\\ 0 & 0\end{bmatrix}\end{tiny}}{\longrightarrow} A\oplus P \stackrel{\begin{tiny}\begin{bmatrix}
			f & x\\ 0 & 1\end{bmatrix}\end{tiny}}{\longrightarrow} A'\oplus P \stackrel{\delta\oplus 0}{\dashrightarrow}$ is an $\mathbb{E}$-triangle in $\xi$. So $\begin{tiny}\begin{bmatrix}
	f & x\\0 & 1
	\end{bmatrix}\end{tiny}\in\mathcal{C}(A\oplus P,A'\oplus P)$ is a $\xi$-deflation. 
	\end{proof}

\begin{thm}\label{LD}
Every object in $\mathcal{C}$ possesses a $\xi$-projective cover if and only if  every object in $\mathcal{C}$ possesses a quasi-$\xi$-projective cover.
\end{thm}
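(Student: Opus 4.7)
The forward direction is immediate: the Remark immediately following the definition notes $\P \subseteq \QP$, so any $\xi$-projective cover (an essential $\xi$-deflation from a $\xi$-projective source) is automatically a quasi-$\xi$-projective cover. Hence existence of $\xi$-projective covers for every object implies existence of quasi-$\xi$-projective covers for every object.

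For the reverse direction, assume every object of $\Mcc$ admits a quasi-$\xi$-projective cover. Fix $A$ and let $q\colon Q\to A$ be its quasi-$\xi$-projective cover; the plan is to show $Q\in\P$, which makes $q$ itself a $\xi$-projective cover of $A$. By enough $\xi$-projectives, choose a $\xi$-deflation $p\colon P\to A$ with $P\in\P$, and lift $p$ along $q$ to obtain $s\colon P\to Q$ with $qs=p$ (possible since $P$ is $\xi$-projective and $q$ is a $\xi$-deflation); since $qs=p$ is a $\xi$-deflation and $q$ is essential, $s$ is a $\xi$-deflation. Lemma \ref{TS} applied to $Q$ with this $\xi$-deflation $s\colon P\to Q$ reduces the goal $Q\in\P$ to proving $Q\oplus P\in\QP$. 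To establish the latter, take the quasi-$\xi$-projective cover $h\colon N\to Q\oplus P$ of $Q\oplus P$ (with $N\in\QP$ and $h$ essential); showing that $h$ is an isomorphism then forces $Q\oplus P\simeq N\in\QP$, and Lemma \ref{TS} yields $Q\in\P$.

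To prove $h$ is an isomorphism, one constructs a section $\sigma$ of $h$. The strategy combines three ingredients: Lemma \ref{MDe}, which yields a $\xi$-deflation $b=\bigl(\begin{smallmatrix}q & p\\ 0 & 1\end{smallmatrix}\bigr)\colon Q\oplus P\to A\oplus P$; Proposition \ref{FDe} applied to the morphism of split $\mathbb{E}$-triangles $Q\to Q\oplus P\to P$ and $A\to A\oplus P\to P$ linked by $(q,b,1_P)$, which propagates essentiality across the diagram; and the lifting properties of $P\in\P$ and $N\in\QP$ applied to the $\mathbb{E}$-triangle associated with $h$. Once $\sigma$ exists, the essentiality of $h$ promotes the splitting to an isomorphism. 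The main obstacle is precisely this diagram chase producing $\sigma$: the essentiality of $q$ (which controls $\xi$-deflations landing in $A$) must be coordinated with the essentiality of $h$ (which controls $\xi$-deflations landing in $Q\oplus P$) via the complementary lifting properties of $P$ and $N$, with Lemma \ref{MDe} and Proposition \ref{FDe} bridging between the two situations.
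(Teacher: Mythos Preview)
Your forward direction is fine and matches the paper. The reverse direction, however, has a genuine gap at exactly the point you flag as ``the main obstacle'': constructing the section $\sigma$ of $h\colon N\to Q\oplus P$.

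The problem is that the lifting properties you have available do not produce such a $\sigma$. Quasi-$\xi$-projectivity of $Q$ only lets you lift maps $Q\to B$ through $\xi$-deflations $Q\to B$ whose \emph{source} is $Q$ itself; it gives you nothing for lifting $Q\to Q\oplus P$ along $h\colon N\to Q\oplus P$, since $N\ne Q$. Likewise, quasi-$\xi$-projectivity of $N$ lets you lift maps $N\to Q\oplus P$ through $h$ to endomorphisms of $N$, but you need to lift $1_{Q\oplus P}$, whose domain is $Q\oplus P$, not $N$. Only the $P$-summand of $1_{Q\oplus P}$ lifts for free (because $P\in\P$); the $Q$-summand is stuck. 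Lemma~\ref{MDe} and Proposition~\ref{FDe} let you move deflations between the $A$-level and the $Q$-level, but they do not manufacture the missing lift. So the ``diagram chase producing $\sigma$'' you gesture at does not close.

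The paper sidesteps this entirely by taking the quasi-$\xi$-projective cover of $A\oplus P$ rather than of $A$. If $\bigl[\begin{smallmatrix}b\\c\end{smallmatrix}\bigr]\colon Q\to A\oplus P$ is that cover, then applying (ET4)$^{\mathrm{op}}$ to this $\mathbb{E}$-triangle against the split triangle $A\to A\oplus P\to P$ yields a diagram in which the middle column $P'\to Q\to P$ is automatically split (its extension class is pushed forward from $0$). Hence $Q\simeq P'\oplus P$, and since $Q\in\QP$ by construction, one gets $P'\oplus P\in\QP$ \emph{for free}---precisely the conclusion you were trying to force by hand. The paper then proves that the induced $f\colon P'\to A$ is essential (this is where Lemma~\ref{MDe} and Proposition~\ref{FDe} are actually used, together with essentiality of $\bigl[\begin{smallmatrix}b\\c\end{smallmatrix}\bigr]$), and finishes with Lemma~\ref{TS} exactly as you intended. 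The key maneuver you are missing is to cover $A\oplus P$ instead of $A$.
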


\begin{proof}
	We only need to prove the ``if'' part since $\P\subseteq\QP$. For any object $A\in\mathcal{C}$, note that $\mathcal{C}$ has enough $\xi$-projective objects, there is an $\mathbb{E}$-triangle $X\stackrel{x}{\longrightarrow} P \stackrel{y}{\longrightarrow} A\stackrel{\delta}{\dashrightarrow}$ in $\xi$ with $P\in\P$. Note that  $A\oplus P$ has a quasi-$\xi$-projective cover. Thus we can obtain an $\mathbb{E}$-triangle $Y\stackrel{a}{\longrightarrow} Q \stackrel{\begin{tiny}\begin{bmatrix}
		b \\  c\end{bmatrix}\end{tiny}}{\longrightarrow} A\oplus P\stackrel{\theta}{\dashrightarrow}$ in $\xi$, where $Q$ is quasi-$\xi$-projective and $\begin{tiny}\begin{bmatrix}
		b \\  c\end{bmatrix}\end{tiny}$ is essential. By $\rm(ET4)^{op}$, we get the following commutative diagram made of $\mathbb{E}$-triangles
	\[
		\xymatrix{
			Y\ar@{=}[d] \ar[r]^g & P'  \ar[r]^{f}\ar[d]^{d} & A \ar[d]^{\begin{tiny}\begin{bmatrix}
				1 \\  0\end{bmatrix}\end{tiny}} \ar@{-->}[r]^{\kappa=\theta^*\begin{tiny}\begin{bmatrix}
					1 \\  0\end{bmatrix}\end{tiny}} &  \\
			Y\ar[r]^a & Q\ar[d]^{e} \ar[r]^{\begin{tiny}\begin{bmatrix}
				b \\  c\end{bmatrix}\end{tiny}\quad} & A\oplus P\ar[d]^{\begin{tiny}\begin{bmatrix}
				0 & 1\end{bmatrix}\end{tiny}} \ar@{-->}[r]^{\theta} &  \\
			& P\ar@{=}[r]\ar@{-->}[d]^{\eta=0}&P\ar@{-->}[d]^{0} \\
			& & &  } 
	\] 
with $\eta=g_*0=0,\kappa=\theta^*\begin{tiny}\begin{bmatrix}
	1 \\  0\end{bmatrix}\end{tiny}$. Hence  $P'\stackrel{d}\longrightarrow Q \stackrel{e}\longrightarrow P\stackrel{\eta}{\dashrightarrow}$ is a split $\mathbb{E}$-triangle in $\xi$. So there is a morphism $e'\in\mathcal{C}(P,Q)$ such that $ee'=1$ by  Lemma \ref{FJ}. And the $\mathbb{E}$-triangle $Y\stackrel{g}{\longrightarrow} P' \stackrel{f}{\longrightarrow} A\stackrel{\kappa}{\dashrightarrow}$ in $\xi$, because $\xi$ is closed under base change. 

Now we claim that the morphism $f$ is a essential $\xi$-deflation. Assume that there is a morphism $h\in\mathcal{C}(D,P')$ such that the  composite morphism $fh:D\to A$ is a $\xi$-deflation. Recall the above commutative diagram. We have  $e=\begin{tiny}\begin{bmatrix}
	0 &  1\end{bmatrix}\end{tiny}\begin{tiny}\begin{bmatrix}
	b \\  c\end{bmatrix}\end{tiny}=c$ and $\begin{tiny}\begin{bmatrix}
		b \\  c\end{bmatrix}\end{tiny}d=\begin{tiny}\begin{bmatrix}
	1 \\  0\end{bmatrix}\end{tiny}f$ which implies that $e=c,bd=f$ and $cd=0$. Note that 
	 \[
		\begin{tiny}\begin{bmatrix}
			b \\  c\end{bmatrix}\end{tiny}\begin{tiny}\begin{bmatrix}
				dh &  e'\end{bmatrix}\end{tiny}=\begin{tiny}\begin{bmatrix}
					bdh &  be'\\cdh&ce'\end{bmatrix}\end{tiny}=\begin{tiny}\begin{bmatrix}
						fh &  be'\\0&1\end{bmatrix}\end{tiny}\in\mathcal{C}(D\oplus P,A\oplus P)
	\]
since the morphism $\begin{tiny}\begin{bmatrix} dh &  e'\end{bmatrix}\end{tiny}$ lies in $\mathcal{C}(D\oplus P,Q)$. Consider the following commutative diagram: 
\[
		\xymatrix{
			D \ar[d]_{fh} \ar[r]^{\begin{tiny}\begin{bmatrix}
				1\\0
				\end{bmatrix}\end{tiny}\quad} & D\oplus P  \ar[r]^{\quad \begin{tiny}\begin{bmatrix}
					0&1
					\end{bmatrix}\end{tiny}}\ar[d]^{\begin{tiny}\begin{bmatrix}
						fh & be'\\0 & 1
						\end{bmatrix}\end{tiny}} & P \ar@{=}[d] \ar@{-->}[r]^{0} &  \\
			A\ar[r]_{\begin{tiny}\begin{bmatrix}
				1\\0
				\end{bmatrix}\end{tiny}\quad} & A\oplus P \ar[r]_{\quad\begin{tiny}\begin{bmatrix}
					0&1
					\end{bmatrix}\end{tiny}} & P \ar@{-->}[r]^{0} & }
	\] 
Then  the morphism $\begin{tiny}\begin{bmatrix} 
	fh & be'\\0 & 1
\end{bmatrix}\end{tiny}$ is a $\xi$-deflation by Lemma \ref{MDe}. Since the morphism $\begin{tiny}\begin{bmatrix}
	b \\  c\end{bmatrix}\end{tiny}$ is a essential  $\xi$-deflation, the morphism $\begin{tiny}\begin{bmatrix}
		dh &  e'\end{bmatrix}\end{tiny}$ is also a essential  $\xi$-deflation.  Therefore, there exists an  $\mathbb{E}$-triangle  $K\longrightarrow D\oplus P \stackrel{\begin{tiny}\begin{bmatrix}
			dh &  e'\end{bmatrix}\end{tiny}}\longrightarrow Q \dashrightarrow$ in $\xi$. Consider the following commutative diagram: 
		\[
				\xymatrix{
					D \ar[d]_{h} \ar[r]^{\begin{tiny}\begin{bmatrix}
						1\\0
						\end{bmatrix}\end{tiny}\quad} & D\oplus P  \ar[r]^{\quad \begin{tiny}\begin{bmatrix}
							0&1
							\end{bmatrix}\end{tiny}}\ar[d]^{\begin{tiny}\begin{bmatrix}
								dh &  e'\end{bmatrix}\end{tiny}} & P \ar@{=}[d] \ar@{-->}[r]^{0} &  \\
					P'\ar[r]^{d} & Q \ar[r]^{e} & P \ar@{-->}[r]^{0} & }
			\] 
So the morphism $h$ is deflation by Proposition \ref{FDe}. Thus there is an  $\mathbb{E}$-triangle  $K'\stackrel{k}\longrightarrow D \stackrel{h}\longrightarrow P' \dashrightarrow$ in $\mathcal{C}$. By \cite[Lemma 4.14]{JDP}, there exists following commutative diagram made of $\mathbb{E}$-triangles.
\[
	\xymatrix{
		K'\ar[d]_{k}\ar[r]&K\ar[d]\ar[r] & 0\ar[d] \ar@{-->}[r]^{0}&\\
		D \ar[d]_{h} \ar[r]^{\begin{tiny}\begin{bmatrix}
			1\\0
			\end{bmatrix}\end{tiny}\quad} & D\oplus P  \ar[r]^{\quad \begin{tiny}\begin{bmatrix}
				0&1
				\end{bmatrix}\end{tiny}}\ar[d]^{\begin{tiny}\begin{bmatrix}
					dh &  e'\end{bmatrix}\end{tiny}} & P \ar@{=}[d] \ar@{-->}[r]^{0} && (*) \\
		P'\ar[r]^{d} \ar@{-->}[d]& Q\ar@{-->}[d] \ar[r]^{e} & P \ar@{-->}[d]\ar@{-->}[r]^{0} &\\
		&& &}
\] 
We fix $P_0\in\P$. Applying $\mathcal{C}(P_0,-)$ to the diagram $(*)$, one obtains the following commutative diagram of abelian groups:
\[
	\xymatrix{
		&0\ar@{-->}[d]&0\ar[d]&0\ar[d]\\
		0\ar[r]&\mathcal{C}(P_0,K')\ar[d]_{\mathcal{C}(P_0,k)}\ar[r]&\mathcal{C}(P_0,K)\ar[d]\ar[r] & 0\ar[d]\ar[r]&0\\
		0\ar[r]&\mathcal{C}(P_0,D) \ar[d]_{\mathcal{C}(P_0,h)}\ar[r] & \mathcal{C}(P_0,D\oplus P)  \ar[r]\ar[d]& \mathcal{C}(P_0,P) \ar@{=}[d] \ar[r]&0&(**)  \\
		0\ar[r]&\mathcal{C}(P_0,P')\ar@{-->}[d]\ar[r] & \mathcal{C}(P_0,Q)\ar[d] \ar[r] & \mathcal{C}(P_0,P) \ar[d] \ar[r]&0\\
		&0&0&0
			}
\]
Since the split $\mathbb{E}$-triangles belong to $\xi$, all the rows  and the right columns of $(**)$ are exact.
Note that the $\mathbb{E}$-triangle $K\longrightarrow D\oplus P \stackrel{\begin{tiny}\begin{bmatrix}
			dh &  e'\end{bmatrix}\end{tiny}}\longrightarrow Q \dashrightarrow$ lies in $\xi$,
so the middle column of $(**)$ is exact. It is easy to see that the morphism $\mathcal{C}(P_0,k)$ is monic. Now Applying the Snake Lemma to the bottow two rows in $(**)$, it follows that the morphism $\mathcal{C}(P_0,h)$ is epic.  By definition we have shown that $K'\stackrel{k}\longrightarrow D \stackrel{h}\longrightarrow P' \dashrightarrow$ is a $\Mcc(\P,-)$-exact $\mathbb{E}$-triangle. Thus $K'\stackrel{k}\longrightarrow D \stackrel{h}\longrightarrow P' \dashrightarrow$ lies in $\xi$ by Lemma \ref{FBZH1}. So the morphism $h$ is a $\xi$-deflation, which implies that  $f$ is a essential $\xi$-deflation.

Next we claim that $P'$ is $\xi$-projective. Since $P$ is $\xi$-projective, we obtain a short exact sequence
\[
	0\longrightarrow\Mcc(P,Y)\longrightarrow\Mcc(P,P')\stackrel{\mathcal{C}(P,f)}\longrightarrow\Mcc(P,A)\longrightarrow 0.
\]
So there is a morphism $t:P\to P'$ such that $y=ft$. Note that $y$ is a $\xi$-deflation and  $f$ is essential. Therefore, the morphism $t$  is also a $\xi$-deflation. Recall the  $\mathbb{E}$-triangle $P'\stackrel{d}\longrightarrow Q \stackrel{e}\longrightarrow P\stackrel{\eta}{\dashrightarrow}$ is  split, which implies $P'\oplus P\simeq Q$. Thus $P'\oplus P$ is a quasi-$\xi$-projective object since $\QP$ is closed under the isomorphism. Therefore, $P'$ lies in $\P$ by Lemma \ref{TS}. Now it is enough to say $f:P'\to A$ is a $\xi$-projective cover of $A$. 
\end{proof}

Dually, there is following result.

\begin{thm}\label{HZG}
	Every object in $\mathcal{C}$ possesses a $\xi$-injective envelope if and only if  every object in $\mathcal{C}$ possesses a quasi-$\xi$-injective envelope.
\end{thm}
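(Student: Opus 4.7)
The plan is to prove this theorem by dualizing the argument used for Theorem~\ref{LD}. The ``only if'' direction is immediate because every $\xi$-injective object is quasi-$\xi$-injective (the dual of the remark right after the definition of quasi-$\xi$-projective), so any $\xi$-injective envelope is already a quasi-$\xi$-injective envelope. The real content is the ``if'' direction, which will run parallel to Theorem~\ref{LD} with every arrow reversed: $\xi$-deflations get replaced by $\xi$-inflations, $(\mathrm{ET4})^{\mathrm{op}}$ by $(\mathrm{ET4})$, $(\mathrm{ET3})^{\mathrm{op}}$ by $(\mathrm{ET3})$, and the role of $\mathcal{P}(\xi)$ by $\mathcal{I}(\xi)$.

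Concretely, fix $A\in\mathcal{C}$. Since $\mathcal{C}$ has enough $\xi$-injectives, there is an $\mathbb{E}$-triangle $A\stackrel{x}{\longrightarrow}I\stackrel{y}{\longrightarrow}X\dashrightarrow$ in $\xi$ with $I\in\mathcal{I}(\xi)$. By hypothesis, $A\oplus I$ admits a quasi-$\xi$-injective envelope, giving an $\mathbb{E}$-triangle $A\oplus I\stackrel{[a\ b]}{\longrightarrow}Q\longrightarrow Y\dashrightarrow$ in $\xi$ with $Q\in\mathcal{QI}(\xi)$ and $[a\ b]$ an essential $\xi$-inflation. Applying $(\mathrm{ET4})$ to the composition $A\xrightarrow{[1\ 0]^{\top}} A\oplus I\xrightarrow{[a\ b]} Q$ (or equivalently to $A\to I$ and $I\to Q$) yields a commutative diagram producing a split $\mathbb{E}$-triangle $I\longrightarrow Q\longrightarrow I'\dashrightarrow$ together with an $\mathbb{E}$-triangle $A\stackrel{f}{\longrightarrow}I'\longrightarrow X\dashrightarrow$ in $\xi$ (using that $\xi$ is closed under cobase change). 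This $f$ is the candidate $\xi$-injective envelope.

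Next I would verify, exactly as in the proof of Theorem~\ref{LD}, the two essential properties of $f$. For essentiality of $f$: if $hf$ is a $\xi$-inflation for some $h\colon I'\to D$, one builds a diagonal morphism and applies the dual of Lemma~\ref{MDe} (whose proof dualizes word for word, swapping coproduct splittings) to produce a $\xi$-inflation through which the relevant composition factors; then by the essentiality of $[a\ b]$ and the dual of Proposition~\ref{FDe} one concludes $h$ itself is a $\xi$-inflation. The Snake-Lemma step in the original proof, which used Lemma~\ref{FBZH1}, should be replaced by its injective analogue: an $\mathbb{E}$-triangle lies in $\xi$ iff $\mathcal{C}(-,I_0)$ sends it to a short exact sequence for every $I_0\in\mathcal{I}(\xi)$. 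For $I'$ being $\xi$-injective: since $I\in\mathcal{I}(\xi)$, applying the $\xi$-injective characterization to the triangle $A\to I'\to X\dashrightarrow$ gives a morphism $s\colon I'\to I$ with $sf=x$, whence $s$ is a $\xi$-inflation by essentiality of $f$; combined with the split triangle $I\to Q\to I'\dashrightarrow$, so that $I\oplus I'\simeq Q\in\mathcal{QI}(\xi)$, the dual of Lemma~\ref{TS} then forces $I'\in\mathcal{I}(\xi)$.

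The main obstacle will be bookkeeping: one must carefully dualize Lemma~\ref{TS}, Lemma~\ref{MDe}, and Proposition~\ref{FDe}, check that each uses only axioms/properties whose duals are also available in $(\mathcal{C},\mathbb{E},\mathfrak{s})$ (in particular, condition (WIC), which is self-dual in spirit since both halves are assumed, and saturation/closure under cobase change of $\xi$), and keep the direction of the morphisms consistent in the big commutative diagrams. Once those dual statements are in hand, the proof is a routine transcription of Theorem~\ref{LD}.
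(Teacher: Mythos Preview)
Your proposal is correct and matches the paper's approach exactly: the paper's proof of Theorem~\ref{HZG} is literally the single sentence ``Dually, there is following result,'' so it too relies on dualizing Theorem~\ref{LD}. One minor bookkeeping correction for when you carry this out: the $(\mathrm{ET4})$ step should take as input the split triangle $I\to A\oplus I\to A$ together with $A\oplus I\to Q\to Y$ (not the composition starting at $A$), so that the output triangles are $I\to Q\to I'$ (split) and $A\xrightarrow{f} I'\to Y$ in $\xi$; the cokernel is $Y$, not $X$.
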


\section{Quasi-$\xi$-$\mathcal{G}$projective objects and  some fundamental properties }

\begin{defn}\citep[Definition 4.4]{JDP}
	An unbounded complex $\mathbf{X}$ is called $\xi$-exact if   $\mathbf{X}$ is a diagram
	\[
	\xymatrix{
		\cdots\ar[r] &X_1\ar[r]^{d_1}&X_0 \ar[r]^{d_0}&X_{-1}\ar[r]&\cdots
	}
	\]
	in $\mathcal{C}$ such that for each integer $n$, there exists an $\mathbb{E}$-triangle $K_{n+1}\stackrel{g_n}\longrightarrow X_n\stackrel{f_n}\longrightarrow K_n\stackrel{\delta_n}\dashrightarrow$ in $\xi$ and $d_n=g_{n-1}f_n$. These $\mathbb{E}$-triangles are called the $\xi$-resolution $\mathbb{E}$-triangles of the $\xi$-exact complex $\mathbf{X}$.
\end{defn}

	A complex $\mathbf{X}$ is called $\Mcc(-,\QP)$-exact   if it is a $\xi$-exact complex with $\Mcc(-,\QP)$-exact resolution $\mathbb{E}$-triangles.

\begin{defn}\label{X1}
	Let $	\mathbf{P}$ is a $\Mcc(-,\QP)$-exact  complex in $\Mcc$ as follows
	\[
	\mathbf{P}: \xymatrix{ \cdots\ar[r]&P_1\ar[r]^{d_1}&P_{0}\ar[r]^{d_0}&P_{-1}\ar[r]&{\cdots}}
	\]
	where $P_n$ is $\xi$-projective for each integer $n$. In this case, for any integer $n$, there exists a $\Mcc(-,\QP)$-exact $\mathbb{E}$-triangle  $K_{n+1}\stackrel{g_n}\longrightarrow X_n\stackrel{f_n}\longrightarrow K_n\stackrel{\delta_n}\dashrightarrow$ in $\xi$ which is the $\xi$-resolution $\mathbb{E}$-triangle of $\mathbf{P}$. Then the objects $K_n$ are called quasi-$\xi$-Gorenstein projective (quasi-$\xi$-$\mathcal{G}$projective for short) for each integer $n$. Dually we can define the quasi-$\xi$-Gorenstein injective  (quasi-$\xi$-$\mathcal{G}$injective for short) objects. We denote by $\mathcal{Q}\Mcg\Mcp(\xi)$ (rsep. $\mathcal{Q}\GI$) the full subcategory of  quasi-$\xi$-$\Mcg$projective (quasi-$\xi$-$\Mcg$injective) objects in $\Mcc$.
\end{defn}

\begin{rem}
	(1) Any $\xi$-projective object is  quasi-$\xi$-$\mathcal{G}$projective.

	(2) Any  quasi-$\xi$-$\mathcal{G}$projective object is $\xi$-$\mathcal{G}$projective (see \citep[Definition 4.7 and 4.8]{JDP}).
\end{rem}

\begin{proof}
	(1) For any $P\in\P$ the $\mathbb{E}$-triangle $P = P\to 0\dashrightarrow$ lies in $\xi$ since $\Delta_0 \subseteq \xi$. For any $Q\in\QP$, applying $\Mcc(-,Q)$ to the given $\mathbb{E}$-triangle $P = P\to 0\dashrightarrow$ gives the exact sequence $0\to \Mcc(P,Q) = \Mcc(P,Q)\to 0 \to 0$, and so this $\mathbb{E}$-triangle is also $\Mcc(-,\QP)$-exact. Hence the complex ${\cdots}\to 0\to P=P\to 0\to {\cdots}$ is a $\QP$-exact complex, and so $P\in \QGP$ by again considering the $\mathbb{E}$-triangle $P = P\to 0\dashrightarrow$. 

	(2) It is obvious since $\P\in\QP$.
\end{proof}
   
\begin{lem}\label{HHH}
Let $A\stackrel{x}\longrightarrow B \stackrel{y}\longrightarrow C  \stackrel{\delta}\dashrightarrow$  be a $\mathbb{E}$-triangle in $\xi$.

(1) If $C$ is quasi-$\xi$-$\mathcal{G}$projective, then the $\mathbb{E}$-triangle $A\stackrel{x}\longrightarrow B \stackrel{y}\longrightarrow C  \stackrel{\delta}\dashrightarrow$ is $\Mcc(-,\QP)$-exact.

(2) If $C$ is a direct summand of quasi-$\xi$-$\mathcal{G}$projective, then the $\mathbb{E}$-triangle $A\stackrel{x}\longrightarrow B \stackrel{y}\longrightarrow C  \stackrel{\delta}\dashrightarrow$ is $\Mcc(-,\QP)$-exact.
\end{lem}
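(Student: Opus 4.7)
The plan is to prove (1) by a $3\times 3$ diagram chase and then deduce (2) from (1) by a direct-sum trick. Both injectivity at $\Mcc(C,Q)$ and surjectivity at $\Mcc(A,Q)$ will flow out of the same diagram; middle exactness at $\Mcc(B,Q)$ is the usual weak-cokernel property of the deflation and requires nothing extra. So fix $Q\in\QP$.

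By the definition of quasi-$\xi$-$\mathcal{G}$projective, $C$ fits into a $\Mcc(-,\QP)$-exact $\mathbb{E}$-triangle $K\stackrel{g}\longrightarrow P\stackrel{f}\longrightarrow C\stackrel{\delta'}\dashrightarrow$ in $\xi$ with $P\in\P$. Apply Lemma \ref{BH}(1) to this triangle and to the given $A\stackrel{x}\longrightarrow B\stackrel{y}\longrightarrow C\stackrel{\delta}\dashrightarrow$ to obtain
\[
\xymatrix{
&K\ar[d]^{m_2}\ar@{=}[r]&K\ar[d]^g\\
A\ar@{=}[d]\ar[r]^{m_1}&M\ar[r]^{e_1}\ar[d]^{e_2}&P\ar[d]^f\\
A\ar[r]^x&B\ar[r]^y&C
}
\]
in which $A\stackrel{m_1}\longrightarrow M\stackrel{e_1}\longrightarrow P$ and $K\stackrel{m_2}\longrightarrow M\stackrel{e_2}\longrightarrow B$ are $\mathbb{E}$-triangles in $\xi$ (closure under base change), with the compatibilities $e_1m_2=g$, $e_2m_1=x$, $ye_2=fe_1$. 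Since $P\in\P$, the middle row splits; fix a section $s\colon P\to M$ of $e_1$ and a retraction $r\colon M\to A$ of $m_1$, so in particular $e_1m_1=0$.

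For surjectivity, given $\alpha\colon A\to Q$ the $\Mcc(-,\QP)$-exactness of $K\to P\to C$ furnishes $\tilde\beta\colon P\to Q$ with $\tilde\beta g=\alpha rm_2$. Set $\mu:=\alpha r-\tilde\beta e_1\colon M\to Q$. Then $\mu m_2=\alpha rm_2-\tilde\beta g=0$, so the weak-cokernel property of $e_2$ in the middle-column $\mathbb{E}$-triangle produces $\beta\colon B\to Q$ with $\beta e_2=\mu$, and $\beta x=\beta e_2m_1=\mu m_1=\alpha rm_1-\tilde\beta e_1m_1=\alpha$ as required. For injectivity, suppose $h\colon C\to Q$ with $hy=0$: using $ye_2=fe_1$ and $e_1s=1_P$ one gets $hf=hfe_1s=hye_2s=0$, and the map $\Mcc(C,Q)\to\Mcc(P,Q)$ given by precomposition with $f$ is injective thanks to the $\Mcc(-,\QP)$-exactness of $K\to P\to C$, forcing $h=0$.

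For (2), write $C$ as a summand of some $\tilde C=C\oplus C'\in\QGP$ and take the direct sum of $\delta$ with the split $\mathbb{E}$-triangle $0\to C'\stackrel{1}\longrightarrow C'\dashrightarrow$; since $\Delta_0\subseteq\xi$ and $\xi$ is closed under finite coproducts this yields a conflation $A\to B\oplus C'\to\tilde C$ in $\xi$, to which (1) applies, and the $B$-component (resp.\ $C$-component) of lifts supplies the required surjection (resp.\ injection). The main obstacle in (1) is spotting the right perturbation $\mu=\alpha r-\tilde\beta e_1$ of the naive candidate $\alpha r\colon M\to Q$: the $\Mcc(-,\QP)$-exactness of the defining $\QGP$-resolution triangle is used precisely to manufacture $\tilde\beta$ killing the defect $\alpha rm_2$ on $K$, while the splitting of $A\to M\to P$ (coming from $\xi$-projectivity of $P$) guarantees that the correction term $\tilde\beta e_1$ vanishes on $A$, leaving the restriction $\beta x=\alpha$ intact.
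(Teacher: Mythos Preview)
Your proof is correct. For (1) you construct the same $3\times 3$ diagram as the paper (via Lemma~\ref{BH}(1)) and observe that the middle row splits because $P\in\P$; but where the paper then simply invokes \cite[Lemma 4.10(1)]{JDP} as a black box to transfer $\Mcc(-,\QP)$-exactness from the middle row and right column to the bottom row, you carry out an explicit chase, manufacturing the lift $\beta$ by correcting $\alpha r$ with $\tilde\beta e_1$ and checking injectivity via $hf=hye_2s=0$. For (2) the paper again builds a pullback square (Lemma~\ref{BH}(1) applied to the split triangle $C'\to C\oplus C'\to C$) and reuses \cite[Lemma 4.10(1)]{JDP}, whereas you take the coproduct with the split triangle $0\to C'\to C'$ and read off the result componentwise. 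Your approach is more self-contained, avoiding the external reference entirely; the paper's is shorter once that lemma is granted.
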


\begin{proof}
	(1) Since $A\stackrel{x}\longrightarrow B \stackrel{y}\longrightarrow C\stackrel{\delta}\dashrightarrow$  is a $\mathbb{E}$-triangle in $\xi$ with $C\in \QGP$, there is a $\Mcc(-,\QP)$-exact $\mathbb{E}$-triangle  $K \stackrel{f}\longrightarrow P \stackrel{g}\longrightarrow C  \stackrel{\theta}\dashrightarrow$ in $\xi$ with $P\in\P$ and $K\in\QGP$. Hence there exists a commutative diagram 
\[
\xymatrix{
	&K\ar@{=}[r]\ar[d]^{f_1}&K\ar[d]^f\\
	A\ar[r]^{x_1}\ar@{=}[d]&M\ar[r]^{y_1}\ar[d]^{g_1}&P\ar[d]^g\ar@{-->}[r]^{g^*\delta}&\\
	A\ar[r]^x&B\ar[r]^y\ar@{-->}[d]^{y^*\theta}&C\ar@{-->}[r]^{\delta}\ar@{-->}[d]^{\theta}&\\
	&&&
}
\]	
made of $\mathbb{E}$-triangles by Lemma \ref{BH} (1). Note that $P$ is $\xi$-projective, so $g$ factors through $y$. Then the the $\mathbb{E}$-triangle $A\stackrel{x_1}\longrightarrow M\stackrel{y_1}\longrightarrow P \stackrel{g^*\delta}\dashrightarrow $ is split by  Lemma \ref{FJ}, hence it is a $\Mcc(-,\QP)$-exact $\mathbb{E}$-triangle in $\xi$. Therefore, the $\mathbb{E}$-triangle 
$A\stackrel{x}\longrightarrow B \stackrel{y}\longrightarrow C  \stackrel{\delta}\dashrightarrow$
 is $\Mcc(-,\QP)$-exact by \citep[Lemma 4.10 (1)]{JDP}.

 (2) Suppose that $C\oplus C'$ is a quasi-$\xi$-$\mathcal{G}$projective object, then we have the following commutative diagram 
 \[
 \xymatrix{
	 &C'\ar@{=}[r]\ar[d]^{f_1}&C'\ar[d]^{\begin{tiny}\begin{bmatrix}
		0\\1
		\end{bmatrix}\end{tiny}}\\
	 A\ar[r]^{x_1}\ar@{=}[d]&M\ar[r]^{y_1}\ar[d]^{g_1}&C\oplus C'\ar[d]^{\begin{tiny}\begin{bmatrix}
		1&0
		\end{bmatrix}\end{tiny}}\ar@{-->}[r]^{\qquad\begin{tiny}\begin{bmatrix}
			1&0
			\end{bmatrix}\end{tiny}^*\delta}&\\
	 A\ar[r]^x&B\ar[r]^y\ar@{-->}[d]^{y^*\theta}&C\ar@{-->}[r]^{\delta}\ar@{-->}[d]^{\theta}&\\
	 &&&
 }
 \]	
 made of $\mathbb{E}$-triangles in $\xi$ by Lemma \ref{BH} (1). Note that the second horizontal is $\Mcc(-,\QP)$-exact by (1) and the third vertical  is  $\Mcc(-,\QP)$-exact, then so is the third horizontal by \citep[Lemma 4.10 (1)]{JDP}.
\end{proof}

In the following part, we give some characterizations of quasi-$\xi$-$\mathcal{G}$projective objects.

\begin{lem}\label{TTTT}
	Assume that $G$ is an object  in $\QGP$. If $Q\in\QP$, then $\ext^0(G,Q) \simeq \Mcc(G,Q)$ and $\ext^n(G,Q)=0$ for any  $n\geq 1$.
\end{lem}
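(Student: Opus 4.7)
The plan is to exploit the defining $\Mcc(-,\QP)$-exact complex associated to a quasi-$\xi$-$\Mcg$projective object and use it as a $\xi$-projective resolution of $G$ that behaves well against every $Q\in\QP$. Explicitly, since $G\in\QGP$, after re-indexing we may assume $G=K_0$ in some $\Mcc(-,\QP)$-exact complex
\[
\mathbf{P}:\ \cdots\longrightarrow P_1\stackrel{d_1}{\longrightarrow}P_0\stackrel{d_0}{\longrightarrow}P_{-1}\longrightarrow\cdots
\]
with every $P_i\in\P$, whose $\xi$-resolution $\Mbe$-triangles are
\[
K_{i+1}\stackrel{g_i}{\longrightarrow}P_i\stackrel{f_i}{\longrightarrow}K_i\dashrightarrow,\qquad d_i=g_{i-1}f_i,
\]
each of which lies in $\xi$ and is $\Mcc(-,\QP)$-exact. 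The left half $\cdots\to P_1\to P_0\to K_0=G\to 0$ is then a genuine $\xi$-projective resolution of $G$, so it may be used to compute $\ext^n(G,Q)$.

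Next I would apply the functor $\Mcc(-,Q)$ to each triangle $K_{i+1}\to P_i\to K_i\dashrightarrow$ for $i\geq 0$. Since the triangle is $\Mcc(-,\QP)$-exact and $Q\in\QP$, this yields a short exact sequence
\[
0\longrightarrow\Mcc(K_i,Q)\longrightarrow\Mcc(P_i,Q)\longrightarrow\Mcc(K_{i+1},Q)\longrightarrow 0.
\]
Splicing these sequences along the identifications $d_i=g_{i-1}f_i$ produces a long exact sequence
\[
0\longrightarrow\Mcc(G,Q)\longrightarrow\Mcc(P_0,Q)\longrightarrow\Mcc(P_1,Q)\longrightarrow\Mcc(P_2,Q)\longrightarrow\cdots
\]
in $\mathbf{Ab}$, which is exactly the augmented complex $0\to\Mcc(G,Q)\to\Mcc(\mathbf{P},Q)$.

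Reading off cohomology of $\Mcc(\mathbf{P},Q)$ then gives $H^0(\Mcc(\mathbf{P},Q))=\ker(\Mcc(P_0,Q)\to\Mcc(P_1,Q))\simeq\Mcc(G,Q)$ and $H^n(\Mcc(\mathbf{P},Q))=0$ for $n\geq 1$. By the definition of the relative $\xi$-cohomology, this translates immediately into $\ext^0(G,Q)\simeq\Mcc(G,Q)$ and $\ext^n(G,Q)=0$ for all $n\geq 1$, as required. The argument is essentially bookkeeping; the only point that requires care is to make sure one genuinely obtains \emph{short} exact sequences at each splicing step, and this is precisely what the $\Mcc(-,\QP)$-exactness of the resolution $\Mbe$-triangles together with $Q\in\QP$ provides, so no further obstacle arises.
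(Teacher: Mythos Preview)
Your proof is correct but proceeds along a genuinely different line from the paper's. The paper argues one step at a time: it fixes a single $\Mbe$-triangle $G'\longrightarrow P\longrightarrow G\dashrightarrow$ with $G'\in\QGP$ and $P\in\P$, invokes Lemma~\ref{HHH}(1) to obtain the short exact Hom-sequence $0\to\Mcc(G,Q)\to\Mcc(P,Q)\to\Mcc(G',Q)\to0$, compares it via the natural maps $\varphi_i\colon\Mcc(-,Q)\to\ext^0(-,Q)$ with the long exact $\ext$-sequence of Lemma~\ref{LZHL}, and then uses the Snake Lemma to force $\varphi_1$ (and likewise $\varphi_3$) to be an isomorphism and $\ext^1(G,Q)=0$; the higher vanishing is obtained afterwards by dimension shifting along further such triangles.

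You instead take the entire left half of the defining $\Mcc(-,\QP)$-exact complex as an explicit $\xi$-projective resolution of $G$ and compute $\ext^n(G,Q)=H^n(\Mcc(\mathbf{P},Q))$ straight from the definition; the $\Mcc(-,\QP)$-exactness of the resolution triangles makes the spliced Hom-complex exact in one stroke. This is more direct and entirely avoids the comparison transformation $\varphi$, the Snake Lemma, and the separate dimension-shifting step. The paper's route has the mild advantage that it isolates the $\ext^0$-isomorphism via the canonical map $\varphi$ (which is reused in Proposition~\ref{BYSJ}), whereas your approach establishes it only as an abstract identification of a kernel; but for the purposes of this lemma your argument is cleaner.
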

\begin{proof}
	Let $G'\longrightarrow P \longrightarrow G  \dashrightarrow$ be an $\mathbb{E}$-triangle in $\xi$ with $G'\in\QGP$ and $P\in\P$. If $Q\in\QP$, then we have the following commutative diagram 
\[
	\xymatrix{
		0\ar[r]&\Mcc(G,Q)\ar[r]\ar[d]_{\varphi_1}&\Mcc(P,Q)\ar[r]\ar[d]^{\varphi_2}_{\simeq }&\Mcc(G',Q)\ar[r]\ar[d]^{\varphi_3}& 0&\\
	0\ar[r]&\ext^0(G,Q)\ar[r]&\ext^0(P,Q)\ar[r]&\ext^0(G',Q)\ar[r]&\ext^1(G,Q)\ar[r]&0.
	}
\]
where the top exact sequence follows from Lemma \ref{HHH}(1). Note that  $\varphi_1$ and  $\varphi_3$ are monic, hence $\varphi_1$ is  epic by Snake Lemma, so $\varphi_1$ is an isomorphism. Similarly, one can get that $\varphi_3$ is an isomorphism, so $\ext^1(G,Q)=0$. It is easy to show that $\ext^n(G, M)=0$ for any $n\geq 1$ by Lemma \ref{LZHL}.
\end{proof}

\begin{lem}\label{Dec30}
	(1) If $A\stackrel{f}\longrightarrow B \stackrel{f'}\longrightarrow C  \stackrel{\delta}\dashrightarrow$ and $B\stackrel{g}\longrightarrow D\stackrel{g'}\longrightarrow E  \stackrel{\delta'}\dashrightarrow$ are both $\Mcc(-,\QP)$-exact   $\mathbb{E}$-triangles in $\xi$, then we have the following commutative diagram:
\[
		\xymatrix{
			A\ar@{=}[d] \ar[r]^{f} & B  \ar[r]^{f'}\ar[d]^{g} & C \ar[d]^d \ar@{-->}[r]^{\delta} &  \\
			A\ar[r]^{h} & D\ar[d]^{g'} \ar[r]^{h'} & F\ar[d]^e \ar@{-->}[r]^{\delta''} & \\
			& E\ar@{=}[r]\ar@{-->}[d]^{\delta'}&E\ar@{-->}[d]^{f'_*\delta'} \\
			& & &  }
	\] 
where all rows and columns are both $\Mcc(-,\QP)$-exact  $\mathbb{E}$-triangles in $\xi$.

(2) If $A\stackrel{f}\longrightarrow B \stackrel{f'}\longrightarrow C  \stackrel{\delta}\dashrightarrow$ and $D\stackrel{g}\longrightarrow C\stackrel{g'}\longrightarrow E  \stackrel{\delta'}\dashrightarrow$ are both $\Mcc(-,\QP)$-exact   $\mathbb{E}$-triangles in $\xi$, then we have the following commutative diagram:
\[
		\xymatrix{
			A\ar@{=}[d] \ar[r]^{d} & F  \ar[r]^{e}\ar[d]^{h} & D \ar[d]^{g} \ar@{-->}[r]^{g^*\delta} &  \\
			A\ar[r]^f & B\ar[d]^{h'} \ar[r]^{f'} & C\ar[d]^{g'} \ar@{-->}[r]^{\delta} & \\
			& E\ar@{=}[r]\ar@{-->}[d]^{\delta''}&E\ar@{-->}[d]^{\delta'} \\
			& & &  }
	\] 
where all rows and columns are both $\Mcc(-,\QP)$-exact  $\mathbb{E}$-triangles in $\xi$.
\end{lem}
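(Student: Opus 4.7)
The plan is to apply axiom $\rm(ET4)$ to the two given $\mathbb{E}$-triangles in $\xi$ and then verify that the two newly produced $\mathbb{E}$-triangles inherit both membership in $\xi$ and $\Mcc(-,\QP)$-exactness. Part (2) will follow by the dual argument based on $\rm(ET4)^{op}$ and closure of $\xi$ under base change.

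Applying $\rm(ET4)$ to $A\stackrel{f}\longrightarrow B\stackrel{f'}\longrightarrow C\stackrel{\delta}\dashrightarrow$ and $B\stackrel{g}\longrightarrow D\stackrel{g'}\longrightarrow E\stackrel{\delta'}\dashrightarrow$ delivers an object $F$, morphisms $h,h',d,e$, an $\mathbb{E}$-extension $\delta''\in\mathbb{E}(F,A)$, and the identities $d^*\delta''=\delta$, $f_*\delta''=e^*\delta'$, $h=gf$, $df'=h'g$, $eh'=g'$, together with the two new $\mathbb{E}$-triangles of the statement; the commutative diagram required is precisely the $\rm(ET4)$ diagram. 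The triangle $C\stackrel{d}\longrightarrow F\stackrel{e}\longrightarrow E\stackrel{f'_*\delta'}\dashrightarrow$ is a cobase change of $\delta'\in\xi$ and so lies in $\xi$ by Definition \ref{ZL}. For $A\stackrel{h}\longrightarrow D\stackrel{h'}\longrightarrow F\stackrel{\delta''}\dashrightarrow$ I would appeal to Lemma \ref{FBZH1}: for any $P\in\P$ and any $\alpha\in\Mcc(P,F)$, lift $e\alpha$ through the $\xi$-triangle of $\delta'$ to $\beta\in\Mcc(P,D)$ with $g'\beta=e\alpha$; then $e(\alpha-h'\beta)=0$ by $eh'=g'$, so middle exactness on $C\to F\to E$ produces $\gamma\in\Mcc(P,C)$ with $d\gamma=\alpha-h'\beta$; lifting $\gamma$ through the $\xi$-triangle of $\delta$ gives $\eta\in\Mcc(P,B)$ with $f'\eta=\gamma$, and $h'(\beta+g\eta)=h'\beta+df'\eta=\alpha$ by $df'=h'g$.

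For $\Mcc(-,\QP)$-exactness, fix $Q\in\QP$ and apply $\Mcc(-,Q)$. Right-exactness of the middle row: lift $\alpha:A\to Q$ through the $\Mcc(-,\QP)$-exact first row to $\alpha':B\to Q$, then through the $\Mcc(-,\QP)$-exact middle column to $\beta:D\to Q$; since $h=gf$, one has $\beta h=\alpha$. Right-exactness of the right column: lift $\alpha f':B\to Q$ through the middle column to $\beta:D\to Q$; since $\beta h=\alpha f'f=0$, middle exactness on the middle row gives $\beta=\gamma h'$, and then $\gamma df'=\gamma h'g=\beta g=\alpha f'$ combined with the injectivity of $-\circ f'$ on the first row forces $\gamma d=\alpha$. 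Left-exactness is also direct: for the right column, $\psi e=0$ yields $\psi g'=\psi eh'=0$, and injectivity of $-\circ g'$ on the middle column gives $\psi=0$; for the middle row, $\phi h'=0$ yields $\phi df'=0$, then $\phi d=0$ by injectivity of $-\circ f'$, middle exactness on the right column produces $\psi$ with $\phi=\psi e$, and the previous argument forces $\psi=0$, hence $\phi=0$.

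The main obstacle is bookkeeping rather than a single hard step: one must consistently distinguish the middle exactness that is automatic from the $\mathbb{E}$-triangle structure, the $\xi$-membership that controls the $\Mcc(P,-)$-sequences with $P\in\P$, and the full $\Mcc(-,\QP)$-exactness that strengthens the $\Mcc(-,Q)$-sequences to short exact sequences, and invoke the five $\rm(ET4)$-identities at the right moments. Part (2) is obtained by applying $\rm(ET4)^{op}$ and repeating the argument dually, with cobase change replaced by base change throughout.
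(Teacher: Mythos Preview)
Your proof is correct and follows the same outline as the paper's: obtain the $\rm(ET4)$ diagram, verify that the two new $\mathbb{E}$-triangles lie in $\xi$, and then check $\Mcc(-,\QP)$-exactness. The difference is only in the tools invoked. The paper observes that, by \cite[Theorem~3.2]{JDP}, $(\Mcc,\Mbe_\xi,\mathfrak{s}_\xi)$ is itself extriangulated, so applying $\rm(ET4)$ \emph{inside that structure} gives $\xi$-membership of both new triangles in one stroke; it then shows $\Mcc(h,Q)$ is surjective via the factorisation $\Mcc(h,Q)=\Mcc(f,Q)\circ\Mcc(g,Q)$ (your argument exactly) and finishes the right column by the $3\times 3$-Lemma rather than by a direct chase. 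Your route---cobase change plus Lemma~\ref{FBZH1} for $\xi$-membership, and explicit lifts using the $\rm(ET4)$ identities $h=gf$, $h'g=df'$, $g'=eh'$ for both surjectivity and injectivity---is more self-contained and makes no appeal to results outside the present paper, at the cost of length.

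One small point: when invoking Lemma~\ref{FBZH1} you only check that $\Mcc(P,h')$ is surjective, but the lemma is stated for the full short exact sequence. Injectivity of $\Mcc(P,h)$ is immediate from $h=gf$ together with the injectivity of $\Mcc(P,f)$ and $\Mcc(P,g)$ (both given triangles lie in $\xi$ and $P\in\P$), and middle exactness is automatic for $\mathbb{E}$-triangles, so this is a trivial gap to close.
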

\begin{proof}
	(1) It follows from \citep[Theorem 3.2]{JDP} and $\rm(ET4)$ that we have the desired commutative diagram where all  rows and columns are both  $\mathbb{E}$-triangles in $\xi$. For any object $Q\in\QP$, $\Mcc(h,Q)$ is an epimorphism since $ \Mcc(h,Q)=\Mcc(f,Q)\Mcc(g,Q)$. And it is easy to check that $\Mcc(h,Q)$ is a monomorphism by \citep[Lemma 3(2)]{JDPPR}. This implies that the  $\mathbb{E}$-triangle $A\stackrel{h}\longrightarrow D \stackrel{h'}\longrightarrow F \stackrel{\delta''}\dashrightarrow$ is  $\Mcc(-,\QP)$-exact. It is easy to get that the  $\mathbb{E}$-triangle $C\stackrel{d}\longrightarrow F \stackrel{e}\longrightarrow E \stackrel{\delta''}\dashrightarrow$  is  $\Mcc(-,\QP)$-exact by $3\times 3$-Lemma.

	(2) It is similar to the proof of (1).
\end{proof}

 A class of $\mathcal{X}$ is called $\xi$-projectively resolving if $\P\subseteqq \mathcal{X}$, and for any  $\mathbb{E}$-triangle $A\longrightarrow B \longrightarrow C \dashrightarrow$ in $\xi$ with $C\in\mathcal{X}$, the conditions $A\in\mathcal{X}$ and $B\in\mathcal{X}$ are equivalent. Also, a class of $\mathcal{Y}$ is called $\xi$-injectively resolving if $\I\subseteqq \mathcal{Y}$, and for any  $\mathbb{E}$-triangle $A\longrightarrow B \longrightarrow C \dashrightarrow$ in $\xi$ with $A\in\mathcal{Y}$, the conditions $B\in\mathcal{Y}$ and $C\in\mathcal{Y}$ are equivalent.

\begin{thm}\label{Dec31}
	$\QGP$ is $\xi$-projectively resolving.
\end{thm}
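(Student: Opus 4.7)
My plan is to verify the two defining conditions of a $\xi$-projectively resolving class. The inclusion $\P\subseteq\QGP$ is already in Remark~4.3(1). It remains to show the two-out-of-three statement: for every $\mathbb{E}$-triangle $A\to B\to C\dashrightarrow$ in $\xi$ with $C\in\QGP$, one has $A\in\QGP$ iff $B\in\QGP$. Note first that Lemma~\ref{HHH}(1) makes this triangle $\Mcc(-,\QP)$-exact automatically, and that the closure of $\xi$ under base/cobase change together with saturation (cf.\ \citep[Corollary~3.5]{JDP}) guarantees all auxiliary triangles built below via Lemma~\ref{BH} and (ET4) again lie in $\xi$.

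For the direction $A,C\in\QGP\Rightarrow B\in\QGP$, I will construct a $\Mcc(-,\QP)$-exact $\xi$-exact complex of $\xi$-projectives having $B$ as a syzygy, by a horseshoe argument. Choose $\Mcc(-,\QP)$-exact $\mathbb{E}$-triangles $A_1\to P^A_0\to A\dashrightarrow$ and $C_1\to P^C_0\to C\dashrightarrow$ from the complete resolutions of $A$ and $C$, with $P^A_0,P^C_0\in\P$ and $A_1,C_1\in\QGP$. Lemma~\ref{BH} together with (ET4) assembles the horseshoe: $\mathbb{E}$-triangles $B_1\to P^A_0\oplus P^C_0\to B\dashrightarrow$ and $A_1\to B_1\to C_1\dashrightarrow$ in $\xi$ fitting into a commutative $3\times 3$ diagram whose outer rows are the chosen resolution triangles. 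Applying $\Mcc(-,Q)$ for $Q\in\QP$: the outer two rows are short exact by choice, the outer two columns by Lemma~\ref{HHH}(1) (using $C,C_1\in\QGP$), and the middle column splits; the Snake/$3\times 3$ Lemma then forces the middle row to be short exact, i.e., the horseshoe triangle is $\Mcc(-,\QP)$-exact. Iterating on $A_1\to B_1\to C_1\dashrightarrow$ builds the left half of the desired complex, and the dual horseshoe applied to the right coresolutions of $A$ and $C$ builds the right half. Splicing yields $B\in\QGP$.

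For the direction $B,C\in\QGP\Rightarrow A\in\QGP$, Lemma~\ref{TTTT} gives $\ext^n(B,Q)=\ext^n(C,Q)=0$ for all $Q\in\QP$ and $n\geq 1$, and the long exact sequence of Lemma~\ref{LZHL} applied to $A\to B\to C\dashrightarrow$ then forces $\ext^n(A,Q)=0$ for all such $n,Q$. Pick any $\xi$-projective resolution $\cdots\to P_1\to P_0\to A$ with syzygy triangles $A_{i+1}\to P_i\to A_i\dashrightarrow$: dimension shifting (using $\ext^{\geq 1}(P_i,Q)=0$) propagates Ext-vanishing to each $A_i$ and hence renders every syzygy triangle $\Mcc(-,\QP)$-exact. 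For the right coresolution, take $B\to P^B_{-1}\to B_{-1}\dashrightarrow$ from the complete resolution of $B$ (so $B_{-1}\in\QGP$) and apply (ET4) to the composable inflations $A\to B\to P^B_{-1}$: this produces $\mathbb{E}$-triangles $A\to P^B_{-1}\to A_{-1}\dashrightarrow$ and $C\to A_{-1}\to B_{-1}\dashrightarrow$ in $\xi$. The first direction applied to the latter gives $A_{-1}\in\QGP$, whence Lemma~\ref{HHH}(1) makes the former $\Mcc(-,\QP)$-exact. Iterating on $A_{-1}$ extends the coresolution indefinitely, and splicing with the left resolution completes a complete $\xi$-projective resolution of $A$.

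The hardest step is the horseshoe in the first direction: one has to verify carefully that the individual triangles in the $3\times 3$ diagram obtained via Lemma~\ref{BH} and (ET4) lie in $\xi$, and that the Snake/$3\times 3$ Lemma correctly propagates $\Mcc(-,\QP)$-exactness through each iteration. Once that bookkeeping is settled, the second direction follows cleanly from Ext-vanishing and a single (ET4) step feeding back into the first.
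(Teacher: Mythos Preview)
Your horseshoe argument for $A,C\in\QGP\Rightarrow B\in\QGP$ and your (ET4) construction of the right coresolution of $A$ in the case $B,C\in\QGP$ both match the paper's proof.

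The gap is in building the \emph{left} resolution of $A$ when $B,C\in\QGP$. You assert that $\ext^{\geq 1}(A,Q)=0$ ``propagates Ext-vanishing to each $A_i$ and hence renders every syzygy triangle $\Mcc(-,\QP)$-exact.'' That inference is not valid in this generality. The long exact sequence of Lemma~\ref{LZHL} links the groups $\ext^0(-,Q)$, not $\Mcc(-,Q)$, and in an extriangulated category the comparison map $\Mcc(X,Q)\to\ext^0(X,Q)$ is only known to be an isomorphism when $X\in\P$ or $Q\in\I$ (see the remark just before Lemma~\ref{LZHL}); $\xi$-deflations need not be epimorphisms, so the map can fail to be injective. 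Thus $\ext^1(A_i,Q)=0$ gives surjectivity of $\ext^0(P_i,Q)\to\ext^0(A_{i+1},Q)$, but not of $\Mcc(P_i,Q)\to\Mcc(A_{i+1},Q)$. Note that in Lemma~\ref{TTTT} the isomorphism $\Mcc(G,Q)\simeq\ext^0(G,Q)$ for $G\in\QGP$ is \emph{deduced from} the $\Mcc(-,\QP)$-exactness of the resolution triangle, so invoking Ext-vanishing to obtain Hom-exactness here would be circular.

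The paper closes this gap by running a second horseshoe ``from the outside in'': take an arbitrary $\xi$-projective cover $K_1^A\to P_0^A\to A$ together with the $\QGP$ cover $K_1^C\to P_0^C\to C$ and assemble a $3\times 3$ diagram whose middle column is $K_1^B\to P_0^A\oplus P_0^C\to B$. Since $B\in\QGP$, Lemma~\ref{HHH}(1) makes the middle column $\Mcc(-,\QP)$-exact, and the $3\times 3$-lemma then forces the $A$-column to be $\Mcc(-,\QP)$-exact as well. For the iteration, a Schanuel comparison with the given $\QGP$ resolution of $B$ exhibits $K_1^B$ as a direct summand of an object of $\QGP$, so Lemma~\ref{HHH}(2) supplies the $\Mcc(-,\QP)$-exactness needed at the next stage.
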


\begin{proof}
	Let $A\stackrel{x}\longrightarrow B \stackrel{y}\longrightarrow C  \stackrel{\delta}\dashrightarrow$ be an $\mathbb{E}$-triangle in $\xi$ with $C\in\QGP$, then the  $\mathbb{E}$-triangle 	 $A\stackrel{x}\longrightarrow B \stackrel{y}\longrightarrow C  \stackrel{\delta}\dashrightarrow$ is   $\Mcc(-,\QP)$-exact by Lemma \ref{HHH}(1).
	
	If $A$ is in $\QGP$, then there are two $\Mcc(-,\QP)$-exact $\mathbb{E}$-triangles
	\[
		A\stackrel{g^A_{-1}}\longrightarrow P^A_{-1} \stackrel{f^A_{-1}}\longrightarrow K^A_0  \stackrel{\delta^A_{-1}}\dashrightarrow \text{\quad and\quad } C\stackrel{g^C_{-1}}\longrightarrow P^C_{-1} \stackrel{f^C_{-1}}\longrightarrow K^C_0  \stackrel{\delta^C_{-1}}\dashrightarrow
	\]
	in $\xi$ with $P^A_{-1},P^C_{-1}\in\P$ and $K^A_0,K^C_0\in\QGP$. By \citep[Lemma 5]{JDPPR} there is the following commutative diagram

	\[
		\xymatrix{
		A\ar[r]^{x}\ar[d]_{g_{-1}^A}&B\ar[r]^{y}\ar[d]^{g_{-1}^B}&C\ar@{-->}[r]^{\delta}\ar[d]^{g_{-1}^C}&\\
		P_{-1}^A\ar[r]^{\begin{tiny}\begin{bmatrix}
		1 \\
		0
		\end{bmatrix}\end{tiny}}\ar[d]_{f_{-1}^A}&P^B_{-1}\ar[r]^{\begin{tiny}\begin{bmatrix}
		0&1
		\end{bmatrix}\end{tiny}}\ar[d]^{f_{-1}^B}&P_{-1}^C\ar@{-->}^0[r]\ar[d]^{f_{-1}^C}&\\
		K^A_0\ar[r]^x\ar@{-->}[d]^{\delta_{-1}^A}& K^B_0\ar[r]^y\ar@{-->}[d]^{\delta_{-1}^B}& K^C_0\ar@{-->}[r]^{\delta}\ar@{-->}[d]^{\delta_{-1}^C}&\\
		&&&
		} 
	\]
where all rows and columns are $\Mcc(-,\QP)$-exact $\mathbb{E}$-triangles in $\xi$ with $P^B_{-1}=:P_{-1}^A\oplus P_{-1}^C$. Since $K^A_0,K^C_0$ belong to $\QGP$, by repeating this process, we can obtain a $\Mcc(-,\QP)$-exact complex 
\[
	\xymatrix{
		 B\ar[r]&P^B_{-1}\ar[r]&P^B_{-2}\ar[r]&P^B_{-3}\ar[r]&\cdots
	}
\]
Similarly, we can obtain a $\Mcc(-,\QP)$-exact complex 
\[
	\xymatrix{
		\cdots\ar[r]&P^B_2\ar[r]&P^B_1\ar[r]&P^B_0\ar[r]&B.
	}
\]
By pasting these  $\Mcc(-,\QP)$-exact complexes together, we obtain the follows $\Mcc(-,\QP)$-exact complex 
\[
	\xymatrix{
		\cdots\ar[r]&P^B_2\ar[r]&P^B_1\ar[r]&P^B_0\ar[r]&P^B_{-1}\ar[r]&P^B_{-2}\ar[r]&P^B_{-3}\ar[r]&\cdots
	}
\]
which implies $B$ is in $\QGP$.

If $B$  is in $\QGP$,  there is a $\Mcc(-, \QP)$-exact $\mathbb{E}$-triangle
\[
	B\stackrel{g^B_{-1}}\longrightarrow P^B_{-1} \stackrel{f^B_{-1}}\longrightarrow K^B_0  \stackrel{\delta^B_{-1}}\dashrightarrow
\]
in $\xi$ with $P_{-1}^B \in \P$ and $K_{0}^B \in \QGP$. By Lemma \ref{Dec30}(1), there exists the following commutative diagram
\[
	\xymatrix{
		A\ar@{=}[d] \ar[r]^x & B  \ar[r]^{y}\ar[d]^{g^B_{-1}} & C \ar[d]^{g} \ar@{-->}[r]^{\delta} &  \\
		A\ar[r]^{g^A_{-1}} & P^B_{-1}\ar[d]^{f^B_{-1}} \ar[r]^{f^A_{-1}} & G\ar[d]^{f} \ar@{-->}[r]^{\delta^A_{-1}} &  \\
		& K^B_0\ar@{=}[r]\ar@{-->}[d]^{\delta^B_{-1}}& K^B_0\ar@{-->}[d]^{y_*\delta^B_{-1}} \\
		& & &  } 
\] 
where all rows and columns are both $\Mcc(-,\QP)$-exact  $\mathbb{E}$-triangles in $\xi$.  Since $G$ lies in $\QGP$, there is a $\Mcc(-,\QP)$-exact complex
\[
	\xymatrix{
		 G\ar[r]&P^A_{-2}\ar[r]&P^A_{-3}\ar[r]&P^A_{-4}\ar[r]&\cdots
	}
\]
with $P^A_n\in \P$ for any $n\geq 2$.  Hence we get a $\Mcc(-,\QP)$-exact $\xi$-exact complex
\[\xymatrix{
	A\ar[r]& P^B_{-1}\ar[r]&P^A_{-2}\ar[r]&P^A_{-3}\ar[r]&\cdots
	}
\]
with $P_{-1}^B \in \P$ and $P_{-n}^A \in \P$ for any $n \geqslant 2$. 

Since $\Mcc$ has enough $\xi$-projective and $C\in\QGP$, there exist two  $\mathbb{E}$-triangles
\[
	K_1^A\stackrel{g^A_0}\longrightarrow P^A_0 \stackrel{f^A_0}\longrightarrow A  \stackrel{\delta^A_0}\dashrightarrow \quad\text{and}\quad K_1^C\stackrel{g^C_0}\longrightarrow P^C_0 \stackrel{f^C_0}\longrightarrow C  \stackrel{\delta^C_0}\dashrightarrow
\]
in $\xi$ with  $P_0^A, P_0^C \in \P$ and $K_1^C \in\QGP$. Because of the  $\mathbb{E}$-triangle $A\stackrel{x}\longrightarrow B \stackrel{y}\longrightarrow C  \stackrel{\delta}\dashrightarrow$ is   $\Mcc(\P,-)$-exact, there  exists a morphism $a\in\Mcc(P^C_0,B)$ such that $f_0^C=ya$. So $(f_0^C)^*\delta=0=(f_0^A)_*$ by Lemma \ref{FJ} and there is a $\xi$-deflation $f^B_0:P_0^A\oplus P_0^C=:P_0^B\to B $ by \citep[Proposition 1]{JDPPR}, which makes the following diagram 
\[
	\xymatrix{P_0^A\ar[r]^{\begin{tiny}\begin{bmatrix} 
	1 \\
	0
	\end{bmatrix}\end{tiny}}\ar[d]_{f^A_0}&P_0^B\ar[r]^{\begin{tiny}\begin{bmatrix}
	0&1
	\end{bmatrix}\end{tiny}}\ar@{-->}[d]^{f^B_0}&P_0^C\ar@{-->}^0[r]\ar[d]^{f^C_0}&\\
	A\ar[r]^x&B\ar[r]^y&C\ar@{-->}[r]^{\delta}&
	}
	\]
commutative. Assume that $K_1^B\stackrel{g^B_0}\longrightarrow P^B_0 \stackrel{f^b_0}\longrightarrow B  \stackrel{\delta^B_0}\dashrightarrow$ is an $\mathbb{E}$-triangle in $\xi$ which is $\Mcc(-,\QP)$-exact. Then we have the following commutative diagram 
\[
	\xymatrix{
	K_1^A\ar[r]^{x_1}\ar[d]_{g^A_0}&K_1^B\ar[r]^{y_1}\ar[d]^{g_0^B}&K_1^C\ar@{-->}[r]^{\delta_1}\ar[d]^{g_0^C}&\\
	P_0^A\ar[r]^{\begin{tiny}\begin{bmatrix}
	1 \\
	0
	\end{bmatrix}\end{tiny}}\ar[d]_{f_0^A}&P_0^B\ar[r]^{\begin{tiny}\begin{bmatrix}
	0&1
	\end{bmatrix}\end{tiny}}\ar[d]^{f_0^B}&P_0^C\ar@{-->}^0[r]\ar[d]^{f_0^C}&\\
	A\ar[r]^x\ar@{-->}[d]^{\delta_0^A}&B\ar[r]^y\ar@{-->}[d]^{\delta_0^B}&C\ar@{-->}[r]^{\delta}\ar@{-->}[d]^{\delta_0^C}&\\
	&&&
	} 
\]
where all rows and columns are $\mathbb{E}$-triangles in $\xi$ by \citep[Lemma 4.14]{JDP}. It is easy to show that the first vertical is $\Mcc(-,\QP)$ by $3\times 3$-Lemma. Recall that  $B$  is in $\QGP$, so there is an $\mathbb{E}$-triangles 
$K'^B_0\stackrel{g'^B_{0}}\longrightarrow P'^B_{0} \stackrel{f'^B_{0}}\longrightarrow B\stackrel{\delta'^B_{0}}\dashrightarrow $
in $\xi$ by definition, where $K'^B_0\in\QGP$ and $P'^B_{0}\in\P$. Then we have $K^B_0\oplus P'^B_{0}\backsimeq K'^B_0\oplus P^B_{0}\in\QGP$. Hence, any $\mathbb{E}$-triangle $K_2^B\stackrel{g^B_1}\longrightarrow P^B_1 \stackrel{f^B_1}\longrightarrow K^B_1  \stackrel{\delta^B_1}\dashrightarrow$ in $\xi$ is $\Mcc(-,\QP)$-exact by Lemma \ref{HHH} (2). Repeating this process, we can obtain a $\Mcc(-,\QP)$-exact complex 
\[
	\xymatrix{
		\cdots\ar[r]&P^A_2\ar[r]&P^A_1\ar[r]&P^A_0\ar[r]&A
	}
\]
with $P^A_n\in\P$ for any $n\geq 0$. Therefore, we obtain that  $A$ is in $\QGP$, as desired.
\end{proof}

\begin{thm}\label{Jan1}
	$\QGP$ is closed under direct summands.
\end{thm}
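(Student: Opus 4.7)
The plan is to show that if $A$ is a direct summand of some $G\in\QGP$, then $A$ itself admits a complete $\Mcc(-,\QP)$-exact $\xi$-projective resolution, so $A\in\QGP$ by Definition \ref{X1}. Writing $G=A\oplus A'$, I would construct the left and right halves of such a resolution for $A$ separately and then splice them.

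For the left half, take any $\xi$-projective resolutions of $A$ and of $A'$ (available by enough $\xi$-projectives); their termwise direct sum is a $\xi$-projective resolution of $G$. Since $G\in\QGP$ and $\QGP$ is $\xi$-projectively resolving with $\P\subseteq\QGP$ (Theorem \ref{Dec31}), an easy induction shows that every left syzygy of $G$ in this resolution lies in $\QGP$, so Lemma \ref{HHH}(1) forces every $\xi$-resolution $\mathbb{E}$-triangle of the direct-sum resolution to be $\Mcc(-,\QP)$-exact. Because such short exactness of abelian groups is preserved under taking direct summands, the chosen $\xi$-projective resolution of $A$ is $\Mcc(-,\QP)$-exact on its own.

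For the right half I iterate a construction. Set $G_0:=G$, $C_0:=A$, $D_0:=A'$. The defining complete resolution of $G_0\in\QGP$ (Definition \ref{X1}) supplies an $\Mcc(-,\QP)$-exact $\mathbb{E}$-triangle $G_0\to Q_{-1}\to K^{G_0}_{-1}\dashrightarrow$ in $\xi$ with $Q_{-1}\in\P$ and $K^{G_0}_{-1}\in\QGP$. Applying Lemma \ref{Dec30}(1) to this together with the split $\mathbb{E}$-triangle $C_0\to G_0\to D_0\dashrightarrow$ produces $\Mcc(-,\QP)$-exact $\mathbb{E}$-triangles $C_0\to Q_{-1}\to C_1\dashrightarrow$ and $D_0\to C_1\to K^{G_0}_{-1}\dashrightarrow$ in $\xi$. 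Adding the split $\mathbb{E}$-triangle $C_0\to C_0\to 0\dashrightarrow$ to the latter via direct sum yields $G_0\to C_0\oplus C_1\to K^{G_0}_{-1}\dashrightarrow$ in $\xi$; both endpoints belong to $\QGP$, so the extension-closure of $\QGP$ (an immediate consequence of Theorem \ref{Dec31}) gives $G_1:=C_0\oplus C_1\in\QGP$, and $C_1$ is a direct summand of $G_1$ with complement $D_1:=C_0$. Repeating the construction with $(G_n,C_n,D_n)$ in place of $(G_{n-1},C_{n-1},D_{n-1})$, I obtain for each $n\geqslant 1$ an $\Mcc(-,\QP)$-exact $\mathbb{E}$-triangle $C_{n-1}\to Q_{-n}\to C_n\dashrightarrow$ in $\xi$ with $Q_{-n}\in\P$ and a new parent $G_n:=C_{n-1}\oplus C_n\in\QGP$.

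Splicing the right-hand $\mathbb{E}$-triangles with the left resolution produces a $\Mcc(-,\QP)$-exact complex of $\xi$-projectives in which $A$ sits as the syzygy at position zero, whence $A\in\QGP$ by Definition \ref{X1}. I expect the main obstacle to be the bookkeeping for the right half: verifying at every stage that the new parent $G_n$ genuinely lies in $\QGP$ so that the next defining right $\mathbb{E}$-triangle is available, and that $\Mcc(-,\QP)$-exactness propagates through every application of Lemma \ref{Dec30}(1) and each direct-sum gluing.
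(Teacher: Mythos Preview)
Your argument is correct. The right-half construction is essentially the paper's: where you form the new parent $G_1=C_0\oplus C_1$ by taking the direct sum of $D_0\to C_1\to K^{G_0}_{-1}\dashrightarrow$ with the split triangle $C_0\to C_0\to 0\dashrightarrow$, the paper instead applies Lemma~\ref{BH}(2) to $M'\to X\to K_0\dashrightarrow$ and the split triangle $M'\to G\to M\dashrightarrow$ to obtain $G\to G_{-1}\to K_0\dashrightarrow$ together with a split column showing $G_{-1}\cong X\oplus M$. These two manoeuvres produce the same object, and the inductive step (extension-closure from Theorem~\ref{Dec31}, then pull a new right $\mathbb{E}$-triangle from Definition~\ref{X1}) is identical.

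Your left-half argument, however, is genuinely different from the paper's. The paper treats the left side by the mirror of the right-side iteration (``Similarly, we can get\dots''), whereas you exploit the existence of enough $\xi$-projectives: pick arbitrary $\xi$-projective resolutions of $A$ and $A'$, observe that their termwise direct sum resolves $G$, use Theorem~\ref{Dec31} to force every left syzygy of $G$ into $\QGP$, invoke Lemma~\ref{HHH}(1) to make each direct-sum triangle $\Mcc(-,\QP)$-exact, and then pass to direct summands in the resulting short exact sequences of abelian groups. This is shorter and avoids the bookkeeping you rightly flag for the right half; its cost is that it does not dualise (there is no ``enough $\xi$-projectives'' on the right), so you still need the iterative construction there. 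The paper's uniform approach trades that economy for symmetry.
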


\begin{proof}
	Assume that $G\in \QGP$ and $M$ is a direct summand of $G$. Then there exists $M'\in\Mcc$ such that $G=M\oplus M'$. Therefore, there exist two split $\mathbb{E}$-triangles
\[
M\stackrel{\begin{tiny}\begin{bmatrix}
	1 \\
	0
	\end{bmatrix}\end{tiny}}\longrightarrow G \stackrel{\begin{tiny}\begin{bmatrix}
		0&1
		\end{bmatrix}\end{tiny}}\longrightarrow M'\stackrel{0}{\dashrightarrow} \text{\ and\ }
		M'\stackrel{\begin{tiny}\begin{bmatrix}
			0 \\
			1
			\end{bmatrix}\end{tiny}}\longrightarrow G \stackrel{\begin{tiny}\begin{bmatrix}
				1&0
				\end{bmatrix}\end{tiny}}\longrightarrow M\stackrel{0}{\dashrightarrow} 
\]
in $\xi$. Since $G\in\QGP$, there exists a  $\Mcc(-,\QP)$-exact $\mathbb{E}$-triangle $G\stackrel{g_{-1}}\longrightarrow P_{-1} \stackrel{f_{-1}}\longrightarrow K_{0} \stackrel{\delta_{-1}}\dashrightarrow$ in $\xi$ with $P_{-1}\in\P$ and $K_0\in\QGP$. By Lemma \ref{Dec30}(1), there exists the following commutative diagram
\[
		\xymatrix{
			M\ar@{=}[d] \ar[r]^{\begin{tiny}\begin{bmatrix}
				1 \\
				0
				\end{bmatrix}\end{tiny}} & G  \ar[r]^{\begin{tiny}\begin{bmatrix}
					0&1
					\end{bmatrix}\end{tiny}}\ar[d]^{g_{-1}} & M' \ar[d]^{x'_{-1}} \ar@{-->}[r]^{0} &  \\
			M\ar[r]^{x_{-1}} & P_{-1}\ar[d]^{f_{-1}} \ar[r]^{y_{-1}} & X\ar[d]^{y'_{-1}} \ar@{-->}[r]^{\theta_{-1}} & \\
			& K_0\ar@{=}[r]\ar@{-->}[d]^{\delta_{-1}}&K_0\ar@{-->}[d]^{\theta'_{-1}=\begin{tiny}\begin{bmatrix}
				0&1
				\end{bmatrix}\end{tiny}_*\delta_{-1}} \\
			& & &  }
	\] 
	where all rows and columns are $\Mcc(-,\QP)$-exact  $\mathbb{E}$-triangles in $\xi$. Note that \[
M'\stackrel{x'_{-1}}\longrightarrow X \stackrel{y'_{-1}}\longrightarrow K_0\stackrel{\theta'_{-1}}{\dashrightarrow} \text{\ and\ }
		M'\stackrel{\begin{tiny}\begin{bmatrix}
			0 \\
			1
			\end{bmatrix}\end{tiny}}\longrightarrow G \stackrel{\begin{tiny}\begin{bmatrix}
				1&0
				\end{bmatrix}\end{tiny}}\longrightarrow M\stackrel{0}{\dashrightarrow} 
\]
are $\mathbb{E}$-triangles in $\xi$.  It follows Lemma \ref{BH}(2) that  we have  the following commutative diagram
 \[
			\xymatrix{
				M'\ar[d]_{\begin{tiny}\begin{bmatrix}
					0 \\
					1
					\end{bmatrix}\end{tiny}}\ar[r]^{x'_{-1}}&X\ar[d]^{g'_{-1}}\ar[r]^{y'_{-1}}&K_0\ar@{=}[d]\ar@{-->}[r]^{\theta'_{-1}} &\\
				G\ar[d]_{\begin{tiny}\begin{bmatrix}
					1&0
					\end{bmatrix}\end{tiny}}\ar[r]^{x''_{-1}}&G_{-1}\ar[r]^{y''_{-1}}\ar[d]^{f'_{-1}}&K_0\ar@{-->}[r]^{\theta''_{-1}} &\\
				M\ar@{=}[r]\ar@{-->}[d]^{0}&M\ar@{-->}[d]^{0}\\
				&&
			}
\]
where \[
G\stackrel{x''_{-1}}\longrightarrow G_{-1} \stackrel{y''_{-1}}\longrightarrow K_0\stackrel{\theta''_{-1}}{\dashrightarrow} \text{\ and\ }
		X\stackrel{g'_{-1}}\longrightarrow G_{-1} \stackrel{f'_{-1}}\longrightarrow M\stackrel{0}{\dashrightarrow} 
\]
are  $\mathbb{E}$-triangles in $\xi$ since $\xi$ is closed under cobase change. Moreover, the  two $\mathbb{E}$-triangles above are $\Mcc(-,\QP)$-exact by Lemma \ref{HHH}. Because $G$ and $K_0$ are in $\QGP$, the object $G_{-1}$ belongs to $\QGP$ by Theorem \ref{Dec31}. Therefore, there is  a  $\Mcc(-,\QP)$-exact $\mathbb{E}$-triangle $G_{-1}\stackrel{g_{-2}}\longrightarrow P_{-2} \stackrel{f_{-2}}\longrightarrow K_{-1} \stackrel{\delta_{-2}}\dashrightarrow$ in $\xi$ with $P_{-2}\in\P$ and $K_{-1}\in\QGP$. So we have the following commutative diagram
\[
		\xymatrix{
			X\ar@{=}[d] \ar[r]^{g'_{-1}} & G_{-1}  \ar[r]^{f'_{-1}}\ar[d]^{g_{-2}} & M \ar[d]^{x'_{-2}} \ar@{-->}[r]^{0} &  \\
			X\ar[r]^{x_{-2}} & P_{-2}\ar[d]^{f_{-2}} \ar[r]^{y_{-2}} & Y\ar[d]^{y'_{-2}} \ar@{-->}[r]^{\theta_{-2}} & \\
			& K_{-1}\ar@{=}[r]\ar@{-->}[d]^{\delta_{-2}}&K_{-1}\ar@{-->}[d]^{(f'_{-1})_*\delta_{-2}} \\
			& & &  }
	\] 
	where all rows and columns are  $\Mcc(-,\QP)$-exact  $\mathbb{E}$-triangles in $\xi$ by Lemma \ref{Dec30}(1). Proceedings this manner, we can obtain a $\Mcc(-,\QP)$-exact complex 
\[\xymatrix{
	M\ar[r]& P_{-1}\ar[r]&P_{-2}\ar[r]&P_{-3}\ar[r]&\cdots
	}
\]
with $P_n\in\P$ for any $n<0$. Similarly, we can get the following  $\Mcc(-,\QP)$-exact complex
\[
	\xymatrix{
		\cdots\ar[r]&P_2\ar[r]&P_1\ar[r]&P_0\ar[r]&M
	}
\]
with $P_n\in\P$ for any $n\geq 0$. Hence, $M\in\QGP$, as desired.
\end{proof}

Dually, there is following results.
\begin{thm}\label{ZEH}
	$\QGI$ is $\xi$-injectively resolving  and closed under direct summands.
\end{thm}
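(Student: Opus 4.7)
The plan is to dualize the proofs of Theorem \ref{Dec31} and Theorem \ref{Jan1}. The set-up is self-dual: the definitions of $\QI$ and $\QGI$ mirror those of $\QP$ and $\QGP$ with all arrows reversed, and every tool used in the projective case has an obvious or already-stated dual. In particular, the dual of Lemma \ref{HHH}(1) asserts that any $\mathbb{E}$-triangle $A\to B\to C\dashrightarrow$ in $\xi$ with $A\in\QGI$ is automatically $\Mcc(\QI,-)$-exact; Lemma \ref{Dec30} already records parts (1) and (2), which are mutually dual via $\rm(ET4)$ and $\rm(ET4)^{op}$; Lemma \ref{BH}(2) is dual to Lemma \ref{BH}(1); and closure of $\xi$ under base change is dual to closure under cobase change. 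The inclusion $\I\subseteq\QGI$ is the obvious dual of part (1) of the Remark following Definition \ref{X1}.

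To prove $\QGI$ is $\xi$-injectively resolving, start with an $\mathbb{E}$-triangle $A\to B\to C\dashrightarrow$ in $\xi$ with $A\in\QGI$; the goal is $B\in\QGI \Leftrightarrow C\in\QGI$. If $C\in\QGI$, I would reproduce the horseshoe construction of Theorem \ref{Dec31} on the coresolution side: pick $\Mcc(\QI,-)$-exact $\mathbb{E}$-triangles $A\to I_0^A\to K_1^A\dashrightarrow$ and $C\to I_0^C\to K_1^C\dashrightarrow$ in $\xi$ with $I_0^A,I_0^C\in\I$ and $K_1^A,K_1^C\in\QGI$, and glue them along $A\to B\to C$ to obtain $B\to I_0^A\oplus I_0^C\to K_1^B\dashrightarrow$ in $\xi$ together with $K_1^A\to K_1^B\to K_1^C\dashrightarrow$ in $\xi$; iterate to the right and splice on a bounded-above $\I$-piece from the left exactly as in Theorem \ref{Dec31}. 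If instead $B\in\QGI$, pick a $\Mcc(\QI,-)$-exact $\mathbb{E}$-triangle $B\to I_0^B\to K_1^B\dashrightarrow$ in $\xi$ with $I_0^B\in\I$ and $K_1^B\in\QGI$, and apply Lemma \ref{Dec30}(2) to the pair $A\to B\to C$ and $B\to I_0^B\to K_1^B$; the resulting $3\times 3$ diagram yields an $\mathbb{E}$-triangle $C\to X\to K_1^B\dashrightarrow$ in which $X$ sits in $A\to I_0^B\to X\dashrightarrow$. Since $A,I_0^B\in\QGI$, the case just handled forces $X\in\QGI$; iterating produces the right $\I$-coresolution of $C$, while the left piece is glued on as in Theorem \ref{Dec31}.

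For closure under direct summands, let $G=M\oplus M'\in\QGI$ and mirror Theorem \ref{Jan1}. Combine the split $\mathbb{E}$-triangles $M\to G\to M'\dashrightarrow$ and $M'\to G\to M\dashrightarrow$ with a $\Mcc(\QI,-)$-exact $\mathbb{E}$-triangle $G\to I_0\to K_1\dashrightarrow$ in $\xi$ (with $I_0\in\I$, $K_1\in\QGI$) via the dual of Lemma \ref{Dec30}(1), and extract from the resulting $3\times 3$ diagram an $\mathbb{E}$-triangle $M\to I_0\to G_0\dashrightarrow$ together with $M'\to G_0\to K_1\dashrightarrow$; the $\xi$-injectively resolving property just established then gives $G_0\in\QGI$. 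Iterating produces the right $\I$-coresolution of $M$, and the left $\I$-coresolution is built by the mirror process using Lemma \ref{BH}(1). The main obstacle throughout is bookkeeping: at each stage one must identify the correct dual $3\times 3$ diagram, keep track of which summand is being inflated versus deflated, and verify $\Mcc(\QI,-)$-exactness of each new $\mathbb{E}$-triangle (which is uniformly guaranteed by the dual of Lemma \ref{HHH}). Once this dualization dictionary is fixed, the arguments of Theorems \ref{Dec31} and \ref{Jan1} transfer line by line.
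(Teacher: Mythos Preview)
Your overall strategy matches the paper exactly: the paper's proof of Theorem~\ref{ZEH} is the single phrase ``Dually, there is following results,'' so dualizing Theorems~\ref{Dec31} and~\ref{Jan1} is precisely what is intended, and most of your dictionary (dual of Lemma~\ref{HHH}, Lemma~\ref{BH}(1)/(2), base/cobase change, $\I\subseteq\QGI$) is set up correctly.

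There is, however, a genuine slip in your treatment of the implication $A,B\in\QGI\Rightarrow C\in\QGI$. You take the $\mathbb{E}$-triangle $B\to I_0^B\to K_1^B$ and combine it with $A\to B\to C$; the diagram you then describe (yielding $A\to I_0^B\to X$ and $C\to X\to K_1^B$) is the output of Lemma~\ref{Dec30}(1), not (2). More importantly, you then claim ``since $A,I_0^B\in\QGI$, the case just handled forces $X\in\QGI$.'' But the case just handled was $(\text{first},\text{third})\in\QGI\Rightarrow\text{middle}\in\QGI$, whereas in $A\to I_0^B\to X$ you know the first and \emph{middle} terms and want the third. That is exactly the implication you are in the process of proving, so the argument is circular at this point, and ``iterating'' does not yield an $\I$-coresolution of $C$ since $X$ is not known to lie in $\I$.

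The correct dualization uses the \emph{left} piece of $B$'s complete $\I$-resolution, namely an $\mathbb{E}$-triangle $K_0^B\to I_0^B\to B$ with $I_0^B\in\I$ and $K_0^B\in\QGI$. Composing the deflations $I_0^B\to B$ and $B\to C$ and applying $\rm(ET4)^{op}$ (equivalently, the appropriate variant of Lemma~\ref{Dec30}) produces $\mathbb{E}$-triangles $G\to I_0^B\to C$ and $K_0^B\to G\to A$ in $\xi$. Now $K_0^B$ and $A$ are the outer terms and both lie in $\QGI$, so the horseshoe direction you already established gives $G\in\QGI$; iterating then builds the left $\I$-resolution $\cdots\to I_1\to I_0^B\to C$. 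The right $\I$-coresolution of $C$ is the harder half and is obtained by dualizing the second (Schanuel-type) part of the proof of Theorem~\ref{Dec31}, not the first. With this correction your outline goes through.
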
  

\begin{cor}
	(1) \citep[Proposition 2.5 and Lemma 2.6]{AF} Assume that $\Mcc$ is a module category and the class $\xi$ is the class of short exact sequences. Then $\QGP$ is $\xi$-projectively resolving  and closed under direct summands.

	(2)  Assume that $\Mcc$ is a triangulated category and the class $\xi$ of triangles is closed under isomorphisms and suspension (see \cite[Section 2.2]{BEL} and \cite[Section 3.3]{HY}). Then $\QGP$ is $\xi$-projectively resolving  and closed under direct summands.
\end{cor}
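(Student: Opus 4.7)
The plan is to derive both parts as direct specializations of Theorem \ref{Dec31} and Theorem \ref{Jan1}: once each setting is recognized as an instance of the extriangulated framework adopted throughout the paper, there is nothing substantive left to prove. My only real task is to check that the standing hypotheses (a proper class of $\Mbe$-triangles, enough $\xi$-projectives and $\xi$-injectives, and Condition (WIC)) are in place in each of the two listed contexts, and that $\QGP$ of Definition \ref{X1} agrees with the class being referenced.

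For part (1), I would first recall that a module category is abelian, hence an extriangulated category with $\Mbe(C,A)=\mathrm{Ext}^{1}(C,A)$ and $\mathfrak{s}$ assigning to an extension its underlying short exact sequence, as in \citep[Example 2.13]{HY}. Taking $\xi$ to be the class of all short exact sequences, the axioms of Definition \ref{ZL} are straightforward: closure under finite coproducts and isomorphisms is obvious, closure under base and cobase change is realized by pullbacks and pushouts along epimorphisms and monomorphisms respectively, and saturation is a routine diagram chase. Enough $\xi$-projectives and $\xi$-injectives exist in any module category, and Condition (WIC) is automatic in the abelian setting. Under this identification $\QP$ reduces to the class of quasi-projective modules and $\QGP$ to the quasi-Gorenstein projective modules of \citep{AF}, so Theorem \ref{Dec31} and Theorem \ref{Jan1} apply and yield exactly \citep[Proposition 2.5 and Lemma 2.6]{AF}.

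For part (2), I would appeal to \citep[Remark 3.3]{JDP} to view the triangulated category $(\Mcc,\Sigma)$ as an extriangulated category via $\Mbe(C,A)=\Mcc(C,\Sigma A)$, with $\mathfrak{s}$ realizing $\delta$ by its distinguished triangle. A class $\xi$ of triangles closed under isomorphism and suspension is automatically a proper class: closure under base and cobase change follows from rotation together with the completion of a commutative square to a morphism of triangles, and saturation is a consequence of the octahedral axiom via the standard $3\times 3$ construction (cf.\ \cite[Section 3.3]{HY}). Condition (WIC) is vacuous in the triangulated setting because every morphism embeds into a distinguished triangle, and enough $\xi$-projectives and $\xi$-injectives are included in the standing hypothesis. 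Thus Theorems \ref{Dec31} and \ref{Jan1} apply and deliver the conclusion in this case as well.

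The main (and essentially only) point to verify is the translation between frameworks; once the ambient context is recast as an extriangulated category with a proper class $\xi$, the corollary is a direct citation of the two preceding theorems, with no additional argument required.
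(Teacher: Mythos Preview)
Your approach is exactly the paper's: the corollary carries no proof in the text and is meant to be read as an immediate specialization of Theorem~\ref{Dec31} and Theorem~\ref{Jan1} once the module-category and triangulated-category settings are recognized as instances of the extriangulated framework (via \citep[Example~2.13]{HY} and \citep[Remark~3.3]{JDP} respectively). Your verification of the standing hypotheses in part~(1) is fine.

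One point in part~(2) needs correction: closure under isomorphism and suspension alone does \emph{not} force $\xi$ to be a proper class of triangles. Your claimed derivations of base/cobase-change closure and saturation from rotation and the octahedral axiom do not go through for an arbitrary iso- and suspension-closed class (take, for instance, the isomorphism- and shift-closure of a single nonsplit triangle). The parenthetical reference to \cite[Section~2.2]{BEL} and \cite[Section~3.3]{HY} in the statement is pointing to Beligiannis's notion of a \emph{proper class of triangles}, which already includes the base/cobase-change and saturation conditions; the phrase ``closed under isomorphisms and suspension'' should be read as shorthand for that full definition, not as a weaker hypothesis from which the rest follows. With that reading, the standing assumptions of the paper (enough $\xi$-projectives/injectives and Condition~(WIC), the latter being automatic in both abelian and triangulated settings) are in force and Theorems~\ref{Dec31} and~\ref{Jan1} apply directly.
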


\begin{prop}\label{BYSJ}
	Let $A\stackrel{x}\longrightarrow B \stackrel{y}\longrightarrow C  \stackrel{\delta}\dashrightarrow$ be an $\mathbb{E}$-triangle in $\xi$ with $A,B\in\QGP$, then $C\in\QGP$ if and only if $\ext^1(C,Q)=0$ for any $Q\in\QP$.
\end{prop}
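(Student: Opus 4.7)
The plan is as follows. The forward implication is immediate from Lemma \ref{TTTT}: if $C\in\QGP$ then $\ext^{n}(C,Q)=0$ for every $n\geq 1$ and every $Q\in\QP$, and $n=1$ gives the claim. For the converse, assume $\ext^{1}(C,Q)=0$ for all $Q\in\QP$; I will exhibit a $\Mcc(-,\QP)$-exact $\xi$-exact complex of $\xi$-projectives with $C$ as a syzygy, in the sense of Definition \ref{X1}, built in two halves.

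For the left half, use that $\mathcal{C}$ has enough $\xi$-projectives to fix an $\mathbb{E}$-triangle $K\to P\to C\dashrightarrow$ in $\xi$ with $P\in\P$. Apply Lemma \ref{BH}(1) to this triangle together with $A\to B\to C\dashrightarrow$; the resulting $3\times 3$ diagram has middle row $A\to M\to P\dashrightarrow$ in $\xi$, and since $P\in\P$ this row splits, giving $M\simeq A\oplus P$ and a middle column $K\to A\oplus P\to B\dashrightarrow$ in $\xi$. The class $\QGP$ is closed under finite direct sums (immediate from Definition \ref{X1} and closure of $\xi$ under finite coproducts) and contains $\P$, so $A\oplus P\in\QGP$. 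Since $B\in\QGP$ and $\QGP$ is $\xi$-projectively resolving (Theorem \ref{Dec31}), the middle column forces $K\in\QGP$. Splicing $K\to P\to C$ with a witness complex of $K$ then supplies the left half.

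For the right half, use $B\in\QGP$ to fix a $\Mcc(-,\QP)$-exact $\mathbb{E}$-triangle $B\to P^{B}\to B^{1}\dashrightarrow$ in $\xi$ with $P^{B}\in\P$ and $B^{1}\in\QGP$. Applying (ET4) to $A\to B\to C\dashrightarrow$ and $B\to P^{B}\to B^{1}\dashrightarrow$ yields $\mathbb{E}$-triangles $A\to P^{B}\to D\dashrightarrow$ and $C\to D\to B^{1}\dashrightarrow$ in $\xi$. The long exact sequence of Lemma \ref{LZHL} applied to $C\to D\to B^{1}$, together with Lemma \ref{TTTT} for $B^{1}$, yields $\ext^{1}(D,Q)\simeq\ext^{1}(C,Q)=0$ for all $Q\in\QP$, so the pair $(A\to P^{B}\to D,\, D)$ satisfies the same hypothesis as the original $(A\to B\to C,\, C)$. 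Iterating this (ET4)-construction down the right coresolution of $B$ produces a compatible system that, patched together, supplies the right half.

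The main obstacle is verifying $\Mcc(-,\QP)$-exactness of each $\mathbb{E}$-triangle appearing in the complex. For triangles whose third term lies in $\QGP$---such as the middle column $K\to A\oplus P\to B$ above---this is automatic from Lemma \ref{HHH}(1). For the remaining triangles, most importantly $K\to P\to C$ in the left half and the pieces arising in the right half, one combines the exact-sequence information from Lemma \ref{LZHL} (in which the hypothesis $\ext^{1}(C,Q)=0$ supplies the terminal surjectivity) with a $3\times 3$-lemma diagram chase on the Lemma \ref{BH}(1) diagram, transferring $\Mcc(-,\QP)$-exactness from the middle row (split) and middle column (Lemma \ref{HHH}(1)) to the remaining row and column. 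Once all pieces are checked, splicing the two halves produces the witness complex and shows $C\in\QGP$.
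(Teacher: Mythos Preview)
Your forward direction and your left-half construction are correct: the $3\times 3$ argument via Lemma \ref{BH}(1) gives $K\in\QGP$ through Theorem \ref{Dec31}, and once $K\in\QGP$ the $\Mcc(-,\QP)$-exactness of $K\to P\to C$ follows from Lemma \ref{LZHL}, Lemma \ref{TTTT}, and the hypothesis $\ext^{1}(C,Q)=0$.

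The right half, however, has a real gap. Your (ET4) step gives $C\to D\to B^{1}$ and $A\to P^{B}\to D$, and you correctly note that the triple $(A,P^{B},D)$ satisfies the same hypotheses as $(A,B,C)$. But iterating this step is sterile: the new middle term $P^{B}$ is already $\xi$-projective, so its canonical right-coresolution step is $P^{B}\xrightarrow{1}P^{B}\to 0$, and (ET4) simply returns the same triple. Meanwhile the triangle $C\to D\to B^{1}$ you have produced has $D\notin\P$ in general, so you have not obtained even the first step $C\to P_{-1}\to K_{0}$ (with $P_{-1}\in\P$) of the required right coresolution, and nothing in the proposal explains how any ``patching'' of the iterates could yield one. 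The phrase ``down the right coresolution of $B$'' does not help, because after the first (ET4) the r\^ole of $B$ has been taken over by $P^{B}$, not by $B^{1}$.

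The paper sidesteps the right-half construction entirely with a short splitting argument. Using $A\in\QGP$, fix $A\to P\to K$ in $\xi$ with $P\in\P$ and $K\in\QGP$; the cobase change (Lemma \ref{BH}(2)) along $A\to B$ yields $\mathbb{E}$-triangles $P\to G\to C$ and $B\to G\to K$ in $\xi$. Theorem \ref{Dec31} applied to the second gives $G\in\QGP$. For the first, Lemma \ref{LZHL} together with the isomorphisms $\ext^{0}(G,Q)\simeq\Mcc(G,Q)$ and $\ext^{0}(P,Q)\simeq\Mcc(P,Q)$ from Lemma \ref{TTTT} turns $\ext^{1}(C,Q)=0$ into surjectivity of $\Mcc(G,Q)\to\Mcc(P,Q)$; taking $Q=P\in\P\subseteq\QP$ shows $P\to G$ is a section, so the triangle splits. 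Hence $C$ is a direct summand of $G\in\QGP$, and Theorem \ref{Jan1} gives $C\in\QGP$. Closure under direct summands thus replaces the need to build any coresolution of $C$ by hand---and makes your left-half work unnecessary as well.
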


\begin{proof}
	The ``only if'' part follows from Lemma \ref{TTTT}. For the ``if'' part, since $A\in\QGP$, there is an $\mathbb{E}$-triangle $A\longrightarrow P \longrightarrow K\dashrightarrow$  with $P\in\P$ and $K\in\QGP$. Then we have the following commutative diagram
	\[
		\xymatrix{
			A\ar[d]\ar[r]&B\ar[d]\ar[r]&C\ar@{=}[d]\ar@{-->}[r] &\\
			P\ar[d]\ar[r]&G\ar[r]\ar[d]&C\ar@{-->}[r] &\\
			K\ar@{=}[r]\ar@{-->}[d]&K\ar@{-->}[d]\\
			&&}
\]
where all rows and columns are $\mathbb{E}$-triangles in $\xi$ since $\xi$ is closed under cobase change. It follows from Theorem \ref{Dec31} that $G\in\QGP$. For the $\mathbb{E}$-triangle $P\longrightarrow G \longrightarrow C\dashrightarrow$, we have the following commutative diagram by hypothesis
	\[
	\xymatrix{
	 &\Mcc(C,Q)\ar[r]\ar[d]_{\varphi_1}&\Mcc(G,Q)\ar[r]\ar[d]^{\varphi_2}_{\simeq }&\Mcc(Q,Q)\ar@{-->}[r]\ar[d]_{\simeq }^{\varphi_3}& 0&\\
	0\ar[r]&\ext^0(C,Q)\ar[r]&\ext^0(G,Q)\ar[r]&\ext^0(G',Q)\ar[r]&0.
	}
\]
where $\varphi_2$ and $\varphi_3$ are isomorphisms by Lemma \ref{TTTT}. Hence,the $\mathbb{E}$-triangle $P\longrightarrow G \longrightarrow C\dashrightarrow$ is split and then $C\in\QGP$ by Theorem \ref{Jan1}.
\end{proof}

We set $\Mbe_{\xi}:=\Mbe|_{\xi}$, that is  for any $A$, $C\in\Mcc$, let
		\[
			\Mbe_{\xi}(C,A) = \{\delta\in \Mbe(C,A)~|~\exists A\stackrel{x}\longrightarrow B \stackrel{y}\longrightarrow C  \stackrel{\delta}\dashrightarrow \in\xi\}
		\]
and $\mathfrak{s}_{\xi} := \mathfrak{s}|_{\Mbe_{\xi}}$. By \citep[Theorem 3.2]{JDP}, $(\mathcal{C},\Mbe_{\xi} , \mathfrak{s}_{\xi})$ is also an extriangulated category.
Consider a part of $\Mbe$-triangles in $\xi$  which lie  in the subcategory $\QGP$, and set $\xi_{\QGP}:=\xi|_{\QGP}$, i.e. for an $\Mbe$-triangle $A\longrightarrow B\longrightarrow C  \dashrightarrow$ in $\xi$,  it lies in $\xi_{\QGP}$ if and only if $A, B,C$ are all  belong to $\QGP$.
Set $\Mbe_{\QGP}:=\Mbe_{\xi}|_{\QGP}$, i.e.
		\[
			\Mbe_{\QGP}(C,A) = \{\delta\in \Mbe(C,A)~|~\exists A\stackrel{x}\longrightarrow B \stackrel{y}\longrightarrow C  \stackrel{\delta}\dashrightarrow \in\xi\text{~with~}A,B,C\in\QGP\}
		\]
and $\mathfrak{s}_{\QGP} := \mathfrak{s}_{\xi}|_{\Mbe_{\QGP}}$.
	\begin{lem}
$(\QGP,\Mbe_{\QGP},\mathfrak{s}_{\QGP})$is an extriangulated category.
  \end{lem}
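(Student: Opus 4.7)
The plan is to verify the six axioms of an extriangulated category for $(\QGP,\Mbe_{\QGP},\mathfrak{s}_{\QGP})$, transporting each one from the extriangulated structure $(\mc{C},\Mbe_\xi,\mathfrak{s}_\xi)$ supplied by \citep[Theorem 3.2]{JDP}. The workhorse throughout will be the closure fact: if $A\to B\to C\dashrightarrow$ is an $\Mbe$-triangle in $\xi$ with $A,C\in\QGP$, then $B\in\QGP$. This is immediate from Theorem \ref{Dec31}, since $\QGP$ is $\xi$-projectively resolving. Applied to split $\Mbe$-triangles (which lie in $\xi$ because $\Delta_0\subseteq\xi$), it also yields that $\QGP$ is closed under finite direct sums.

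First I would check that $\Mbe_{\QGP}$ is a biadditive subfunctor, which gives (ET1). For $a\in\mc{C}(A,A')$ with $A,A'\in\QGP$ and $\delta\in\Mbe_{\QGP}(C,A)$, the extension $a_*\delta$ is realized by some $A'\to B'\to C\dashrightarrow$ in $\xi$ by closure of $\xi$ under cobase change; the closure fact then forces $B'\in\QGP$, so $a_*\delta\in\Mbe_{\QGP}(C,A')$. The case of $c^*$ is dual. Additivity on Hom-groups is inherited from $\Mbe_\xi$ once we know the Baer sum stays in $\Mbe_{\QGP}$, and this holds because $\delta_1\oplus\delta_2$ is realized by the direct sum of the two realizations (which lies in $\xi_{\QGP}$ by direct-sum closure), and then pulling back along $\Delta_C$ and pushing forward along $\nabla_A$ preserves $\Mbe_{\QGP}$ by what we just checked. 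For (ET2), the split $\Mbe$-triangle $A\to A\oplus C\to C\dashrightarrow$ lies in $\xi_{\QGP}$ whenever $A,C\in\QGP$, so $\mathfrak{s}_{\QGP}(0)=0$, and additivity of $\mathfrak{s}_{\QGP}$ on direct sums is inherited directly from $\mathfrak{s}_\xi$.

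For (ET3) and (ET3)$^{op}$, the missing morphism $c$ is supplied by the corresponding axiom for $(\mc{C},\Mbe_\xi,\mathfrak{s}_\xi)$; since $C,C'\in\QGP$ are already given, $c\in\mc{C}(C,C')$ is automatically a morphism inside $\QGP$, and the realizing triangles live in $\xi_{\QGP}$ by hypothesis. For (ET4), applying the axiom in $(\mc{C},\Mbe_\xi,\mathfrak{s}_\xi)$ to $\delta\in\Mbe_{\QGP}(D,A)$ and $\delta'\in\Mbe_{\QGP}(F,B)$ produces an object $E\in\mc{C}$ together with $\Mbe$-triangles $A\to C\to E\dashrightarrow$ and $D\to E\to F\dashrightarrow$ in $\xi$. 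Since $D,F\in\QGP$, the closure fact applied to $D\to E\to F\dashrightarrow$ gives $E\in\QGP$; the hypothesis also places $A,C$ in $\QGP$, so $A\to C\to E\dashrightarrow$ sits in $\xi_{\QGP}$, as required. The axiom (ET4)$^{op}$ is symmetric.

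The main obstacle is bookkeeping rather than any genuinely hard step: one must confirm that \emph{every} middle object constructed in \emph{every} diagram the axioms demand lies in $\QGP$. Once Theorem \ref{Dec31} (and the resulting closure of $\QGP$ under finite direct sums) is in hand, each such check reduces to one invocation of the same $\xi$-projectively resolving principle.
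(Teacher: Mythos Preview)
Your proposal is correct and rests on exactly the same key fact the paper uses: $\QGP$ is closed under extensions in $(\Mcc,\Mbe_\xi,\mathfrak{s}_\xi)$ by Theorem~\ref{Dec31}. The only difference is packaging: the paper invokes \cite[Remark 2.18]{HY}, which states in general that any extension-closed full additive subcategory of an extriangulated category inherits an extriangulated structure via restriction, whereas you unpack that remark by verifying (ET1)--(ET4) by hand. Your direct check is self-contained and makes explicit why each newly constructed object lands in $\QGP$; the paper's one-line citation is shorter but presumes familiarity with the general result.
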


\begin{proof}
Since  $\QGP$ is an extension-closed subcategory of $\Mcc$ by Corollary \ref{Dec31}, it follows directly from \cite[Remark 2.18]{HY}.
\end{proof}

\begin{prop}
	$\xi_{\QGP}$ is a proper class of $(\QGP,\Mbe_{\QGP},\mathfrak{s}_{\QGP})$.
\end{prop}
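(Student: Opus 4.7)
The plan is to verify the four requirements coming from Definition \ref{ZL}, namely that $\xi_{\QGP}$ is closed under isomorphisms, that $\Delta_0 \subseteq \xi_{\QGP}$ together with closure under finite coproducts, that $\xi_{\QGP}$ is closed under base and cobase change, and that $\xi_{\QGP}$ is saturated. The heavy lifting is going to be done by Theorem \ref{Dec31} (stating that $\QGP$ is $\xi$-projectively resolving) combined with the fact that $\xi$ is already a proper class of $\Mbe$-triangles; the main idea is that whenever a new $\Mbe$-triangle is produced by one of these closure operations inside $\xi$, two of its three objects will automatically lie in $\QGP$, so Theorem \ref{Dec31} forces the third one into $\QGP$ as well.

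I would begin with the preliminary conditions. Closure under isomorphisms for $\xi_{\QGP}$ is inherited from the analogous property of $\xi$, since every $\Mbe_{\QGP}$-isomorphism is already an $\Mbe$-isomorphism in $\Mcc$. Next I would check that $\QGP$ is closed under finite coproducts: given $X,Y\in\QGP$, the split $\Mbe$-triangle $Y \to Y\oplus X \to X \dashrightarrow$ lies in $\xi$, so by Theorem \ref{Dec31} (applied with $X\in\QGP$ and $Y\in\QGP$) we obtain $Y\oplus X\in\QGP$. This immediately gives $\Delta_0\subseteq\xi_{\QGP}$ and closure of $\xi_{\QGP}$ under finite coproducts of $\Mbe$-triangles, using the corresponding properties of $\xi$.

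For base change, I would take $A\stackrel{x}\longrightarrow B \stackrel{y}\longrightarrow C \stackrel{\delta}\dashrightarrow$ in $\xi_{\QGP}$ and any morphism $c:C'\to C$ with $C'\in\QGP$. Since $\xi$ is closed under base change, there is an $\Mbe$-triangle $A\to B'\to C'\stackrel{c^*\delta}\dashrightarrow$ in $\xi$; with $A,C'\in\QGP$, Theorem \ref{Dec31} forces $B'\in\QGP$, so this new triangle lies in $\xi_{\QGP}$. Cobase change is dual. For saturation, suppose that $A_2\stackrel{x_2}\longrightarrow B_2 \stackrel{y_2}\longrightarrow C \stackrel{\delta_2}\dashrightarrow$ and $A_1\stackrel{m_1}\longrightarrow M \stackrel{e_1}\longrightarrow B_2 \stackrel{y_2^*\delta_1}\dashrightarrow$ both lie in $\xi_{\QGP}$, in the configuration of Lemma \ref{BH}(1). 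Since $\xi$ is saturated, the $\Mbe$-triangle $A_1\stackrel{x_1}\longrightarrow B_1 \stackrel{y_1}\longrightarrow C \stackrel{\delta_1}\dashrightarrow$ lies in $\xi$; as $A_1,C\in\QGP$, Theorem \ref{Dec31} once again yields $B_1\in\QGP$, placing this triangle in $\xi_{\QGP}$.

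There is no serious obstacle in this argument; the only point that requires attention is ensuring, in each of the four steps, that the middle term of the newly produced $\Mbe$-triangle lands in $\QGP$. Each such verification reduces to a single application of the projectively resolving property from Theorem \ref{Dec31}, which is why that theorem carries essentially the entire proof. A small cosmetic care is needed to phrase everything so that we are genuinely working inside the extriangulated category $(\QGP,\Mbe_{\QGP},\mathfrak{s}_{\QGP})$ rather than just in $\Mcc$, but since the $\Mbe$-extensions and their realizations for $\Mbe_{\QGP}$ are by definition restrictions of those for $\Mbe$, there is nothing substantive to check beyond what has already been described.
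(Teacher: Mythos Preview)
Your proposal is correct and follows essentially the same approach as the paper's proof: the paper dismisses conditions (1) and (2) of Definition~\ref{ZL} as ``easy to check'' and then verifies saturation exactly as you do, by passing to the ambient category and invoking Theorem~\ref{Dec31} to force $B_1\in\QGP$. Your write-up simply makes the ``easy'' parts explicit.
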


\begin{proof}
	It is easy to check $\xi_{\QGP}$ satisfies (1) and (2) of Definition \ref{ZL}. Now we show $\xi_{\QGP}$ is saturated.
							
	In the diagram of ~\ref{BH}(1)
		\[
		\xymatrix{
				&{A_2}\ar[d]^{m_2}\ar@{=}[r]&{A_2}\ar[d]^{x_2}\\
				{A_1}\ar@{=}[d]\ar[r]^{m_1}&M\ar[r]^{e_1}\ar[d]^{e_2}&{B_2}\ar[d]^{y_2}\\
				{A_1}\ar[r]^{x_1}&{B_1}\ar[r]^{y_1}&C
				}
		\]
	When $\mathbb{E}$-triangle $A_2\stackrel{x_2}\longrightarrow B_2 \stackrel{y_2}\longrightarrow C  \stackrel{\delta_2}\dashrightarrow$ and $A_1\stackrel{m_1}\longrightarrow M \stackrel{e_1}\longrightarrow B_2  \stackrel{y_2^*\delta_1}\dashrightarrow$  are both	in $\xi_{\QGP}$, we claim that the $\mathbb{E}$-triangle $A_1\stackrel{x_1}\longrightarrow B_1 \stackrel{y_1}\longrightarrow C  \stackrel{\delta_1}\dashrightarrow$ is also in $\xi_{\QGP}$. In fact, regard the diagram as a commutative diagram in $(\mathcal{C},\Mbe_{\xi} , \mathfrak{s}_{\xi})$. Since $A_1\stackrel{x_1}\longrightarrow B_1 \stackrel{y_1}\longrightarrow C  \stackrel{\delta_1}\dashrightarrow$  is in $\xi$ where  $A_1$ and $C$ belong to $\QGP$, so $B_1\in\GP$ by Theorem \ref{Dec31}. Thus $\mathbb{E}$-triangle $A_1\stackrel{x_1}\longrightarrow B_1 \stackrel{y_1}\longrightarrow C  \stackrel{\delta_1}\dashrightarrow$ lies in  $\xi_{\QGP}$. Then we obtain that $\xi_{\QGP}$ is a proper class of the  extriangulated category $(\QGP,\Mbe_{\QGP},\mathfrak{s}_{\QGP})$.
\end{proof}

\begin{prop}\label{TF}
	If $M$ is   quasi-$\xi$-projective  in $(\Mcc,\Mbe,\mathfrak{s})$ , then $M$ is quasi-$\xi_{\QGP}$-injective   in  $(\QGP,\Mbe_{\QGP},\mathfrak{s}_{\QGP})$.
   \end{prop}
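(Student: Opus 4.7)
The plan is to unwind the definition of quasi-$\xi_{\QGP}$-injective in the extriangulated category $(\QGP,\Mbe_{\QGP},\mathfrak{s}_{\QGP})$ and verify the required short exact sequence by feeding the triangle through the relative Ext long exact sequence of Lemma \ref{LZHL} and then collapsing it via Lemma \ref{TTTT}. Note first that for the conclusion to make sense $M$ must itself be an object of $\QGP$, so I will proceed under this implicit assumption (i.e.\ $M\in\QP\cap\QGP$).

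Step 1 (Setup). Fix an arbitrary $\Mbe_{\QGP}$-triangle $A\stackrel{x}\longrightarrow M\stackrel{y}\longrightarrow B\stackrel{\delta}\dashrightarrow$ in $\xi_{\QGP}$. By definition of $\xi_{\QGP}$ and $\Mbe_{\QGP}$, this is also an $\Mbe$-triangle lying in $\xi$, and each of $A,M,B$ belongs to $\QGP$. The goal is to show that the induced sequence
\[
0\longrightarrow \Mcc(B,M)\longrightarrow \Mcc(M,M)\longrightarrow \Mcc(A,M)\longrightarrow 0
\]
is exact.

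Step 2 (Long exact sequence and reduction). Applying the contravariant form of Lemma \ref{LZHL} with $X=M$ to this $\Mbe$-triangle in $\xi$ produces
\[
0\to \ext^0(B,M)\to \ext^0(M,M)\to \ext^0(A,M)\to \ext^1(B,M)\to\cdots.
\]
Since $M$ is quasi-$\xi$-projective, $M\in\QP$, and since $A,M,B$ all lie in $\QGP$, Lemma \ref{TTTT} applies to each of the three pairs $(A,M)$, $(M,M)$, $(B,M)$, giving natural isomorphisms $\ext^0(-,M)\simeq \Mcc(-,M)$ on these objects, and moreover $\ext^1(B,M)=0$. Substituting into the long exact sequence collapses it to the required short exact sequence, establishing the proposition.

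There is essentially no technical obstacle here: once one notices the hidden hypothesis $M\in\QGP$, the proof is a direct invocation of two earlier lemmas. The only point worth checking carefully is that all four applications of Lemma \ref{TTTT} are simultaneously legal, which boils down to confirming that $M$ (the second argument each time) is quasi-$\xi$-projective and that each first argument $A,M,B$ belongs to $\QGP$ — precisely the standing hypothesis and the choice of the triangle in $\xi_{\QGP}$.
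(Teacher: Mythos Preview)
Your proof is correct and follows essentially the same route as the paper's: take an arbitrary triangle $A\to M\to B\dashrightarrow$ in $\xi_{\QGP}$, apply the long exact sequence of Lemma \ref{LZHL}, and use Lemma \ref{TTTT} to identify $\ext^0$ with $\Mcc$ and to kill $\ext^1(B,M)$. Your remark about the implicit hypothesis $M\in\QGP$ is well placed; the paper leaves this tacit (it is forced by the definition of $\xi_{\QGP}$ once such a triangle exists, and is needed for the conclusion to be meaningful).
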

   \begin{proof}
   For any $\Mbe$-triangle $A\longrightarrow M \longrightarrow B \dashrightarrow \in\xi_{\QGP}$, there exists long exact sequence
   \[
	   0\longrightarrow\ext^0(B,M)\longrightarrow\ext^0(M,M)\longrightarrow\ext^0(A,M)\longrightarrow\ext^1(B,M).
   \]
   Note that $A, B$ are quasi-$\xi$-$\mathcal{G}$projective  and $M\in\QP$, by Lemma \ref{TTTT}, we have short exact sequence
   \[
	   0\longrightarrow\Mcc(B,M)\longrightarrow\Mcc(M,M)\longrightarrow\Mcc(A,M)\longrightarrow0.
   \]
   So $M$ is quasi-$\xi_{\QGP}$-injective in $(\QGP,\Mbe_{\QGP},\mathfrak{s}_{\QGP})$.
   \end{proof}

\section{Quasi-$\xi$-$\mathcal{G}$projective dimensions and some properties}

\qquad The quasi-$\xi$-$\mathcal{G}$projective dimension $\Qgpd{A}$ of an object $A$ is defined inductively. When $A=0$, put $\Qgpd{A}=-1$.  If $A\in\QGP$, then define $\Qgpd{A}=0$. Next by induction, for an integer $n>0$, put $\Qgpd{A}\leqslant n$ if there exists an $\mathbb{E}$-triangle $K\rightarrow G\rightarrow A\dashrightarrow$ in $\xi$ with $G\in \QGP$ and $\Qgpd{K}\leqslant n-1$. 

We say $\Qgpd{A}=n$ if $\Qgpd{A}\leqslant n$ and $\Qgpd{A} \nleqslant  n-1$. If $\Qgpd{A}\neq n$, for all $n\geqslant0$, we set  $\Qgpd{A}=\infty$. Dually, we can define  the quasi-$\xi$-$\mathcal{G}$injective dimension $\Qgid{A}$ of any object $A$ in $\Mcc$.

We use $\widehat{\mathcal{QGP}}(\xi)$ (resp.  $\widehat{\mathcal{QIP}}(\xi)$)  to denote the full subcategory of $\Mcc$ whose objects have finite  quasi-$\xi$-$\mathcal{G}$projective (resp.  quasi-$\xi$-$\mathcal{G}$injective) dimension.

\begin{lem}\label{YMY}
	Let  $K\longrightarrow G_1\longrightarrow G_0 \longrightarrow A$ be a $\xi$-exact complex with $G_0,G_1\in\QGP$, then there exist two$\xi$-exact complexes  
\[K\longrightarrow P_0\longrightarrow G'_0 \longrightarrow A\text{\quad  and\quad}K\longrightarrow G'_1\longrightarrow P_1 \longrightarrow A\] with $P_0,P_1\in\P$ and  $G'_0,G'_1\in\QGP$.
\end{lem}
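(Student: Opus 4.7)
The plan is to extract from the hypothesis the two realizing $\Mbe$-triangles of the $\xi$-exact complex, pick a projective-resolution step for whichever of $G_0$, $G_1$ we wish to replace (using its membership in $\QGP$), and then glue everything together via the standard $3\times 3$ constructions. The two claims are formally symmetric. Throughout, write $K\to G_1\to G_0\to A$ as the pair of $\Mbe$-triangles
\[
(\mathrm{i})\ K\longrightarrow G_1\longrightarrow L\dashrightarrow,\qquad (\mathrm{ii})\ L\longrightarrow G_0\longrightarrow A\dashrightarrow
\]
in $\xi$, where $L$ is the intermediate syzygy object.

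For the first complex $K\to P_0\to G'_0\to A$, since $G_1\in\QGP$ there is a $\Mcc(-,\QP)$-exact $\Mbe$-triangle $G_1\to P_0\to H_1\dashrightarrow$ in $\xi$ with $P_0\in\P$ and $H_1\in\QGP$. Applying $(\mathrm{ET4})$ to this and $(\mathrm{i})$, which share the object $G_1$, produces an object $N$ together with $\Mbe$-triangles $K\to P_0\to N\dashrightarrow$ and $L\to N\to H_1\dashrightarrow$ in $\xi$. Then apply Lemma \ref{BH}(2) to $L\to N\to H_1\dashrightarrow$ and $(\mathrm{ii})$, both starting at $L$, to obtain an object $M$ and $\Mbe$-triangles $G_0\to M\to H_1\dashrightarrow$ and $N\to M\to A\dashrightarrow$ in $\xi$. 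Since $G_0,H_1\in\QGP$, Theorem \ref{Dec31} forces $M\in\QGP$. Setting $G'_0:=M$ and splicing $K\to P_0\to N\dashrightarrow$ with $N\to M\to A\dashrightarrow$ through $N$ yields the desired $\xi$-exact complex.

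For the second complex $K\to G'_1\to P_1\to A$, since $G_0\in\QGP$ there is a $\Mcc(-,\QP)$-exact $\Mbe$-triangle $G''_0\to P_1\to G_0\dashrightarrow$ in $\xi$ with $P_1\in\P$ and $G''_0\in\QGP$. Applying $(\mathrm{ET4})^{\mathrm{op}}$ to this $\Mbe$-triangle and $(\mathrm{ii})$, which share $G_0$ in the complementary orientation, produces an object $F$ and $\Mbe$-triangles $G''_0\to F\to L\dashrightarrow$ and $F\to P_1\to A\dashrightarrow$ in $\xi$. Then Lemma \ref{BH}(1) applied to $(\mathrm{i})$ and $G''_0\to F\to L\dashrightarrow$, both ending at $L$, produces an object $M'$ and $\Mbe$-triangles $K\to M'\to F\dashrightarrow$ and $G''_0\to M'\to G_1\dashrightarrow$ in $\xi$; the latter again gives $M'\in\QGP$ by Theorem \ref{Dec31}. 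Setting $G'_1:=M'$ and splicing through $F$ produces the second required complex.

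The main bookkeeping to verify is that every $\Mbe$-triangle produced in the $3\times 3$ diagrams actually lies in $\xi$; this follows from the closure of the proper class $\xi$ under base change, cobase change, and saturation applied to the input $\Mbe$-triangles, exactly the pattern of $3\times 3$ arguments used in Theorem \ref{Dec31} and Lemma \ref{Dec30}. The essential structural input is Theorem \ref{Dec31}, invoked twice to pin down the $\QGP$-membership of $M$ and $M'$ from the extension $\Mbe$-triangles $G_0\to M\to H_1\dashrightarrow$ and $G''_0\to M'\to G_1\dashrightarrow$.
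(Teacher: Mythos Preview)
Your proof is correct and follows essentially the same route as the paper: extract the two resolution $\Mbe$-triangles, use a $\QGP$-complete-resolution step on $G_1$ (resp.\ $G_0$), apply $(\mathrm{ET4})$ (resp.\ $(\mathrm{ET4})^{\mathrm{op}}$) and then Lemma~\ref{BH}(2) (resp.\ (1)) to reassemble, and invoke Theorem~\ref{Dec31} on the resulting extension to conclude $\QGP$-membership. The paper handles the second complex by the single word ``Similarly'', whereas you spell out the dual construction explicitly, but the argument is the same.
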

\begin{proof}
Since $K\longrightarrow G_1\longrightarrow G_0 \longrightarrow A$ is a $\xi$-exact complex, there exist two $\Mbe$-triangles
\[
		K\longrightarrow G_1\longrightarrow K_1 \dashrightarrow \text{\quad and\quad } K_1\longrightarrow G_0 \longrightarrow A \dashrightarrow
\]
in $\xi$. Since $G_1\in\QGP$, there exists an $\Mbe$-triangle $G_1\longrightarrow P_0\longrightarrow G'_1 \dashrightarrow$
in $\xi$ with $P_0\in\P$ and $G'_1\in\QGP$. By \citep[Theorem 3.2]{JDP} and $\rm(ET4)$, we have the following commutative diagram
\[
		\xymatrix{
			K\ar@{=}[d] \ar[r] & G_1  \ar[r]\ar[d] & K_1 \ar[d] \ar@{-->}[r] &  \\
			K\ar[r]& P_0\ar[d] \ar[r] & B\ar[d] \ar@{-->}[r] & \\
			& G'_1\ar@{=}[r]\ar@{-->}[d]&G'_1\ar@{-->}[d] \\
			& & &  }
	\] 
where all rows and columns are $\Mbe$-triangles in $\xi$. It follows Lemma \ref{BH}(2) that  we have  the following commutative diagram
 \[
			\xymatrix{
				K_1\ar[d]\ar[r]&G_0\ar[d]\ar[r]&A\ar@{=}[d]\ar@{-->}[r]&\\
				B\ar[d]\ar[r]&G'_0\ar[r]\ar[d]&A\ar@{-->}[r]&\\
				G'_1\ar@{=}[r]\ar@{-->}[d]&G'_1\ar@{-->}[d]\\
				&&
			}
\]
where all rows and columns are $\Mbe$-triangles in $\xi$ since $\xi$ is closed under cobase change.  Since $G_0,G_1\in\QGP$, by
Theorem \ref{Dec31}, $G'_0\in\QGP$. Then we get the $\xi$-exact complex $K\longrightarrow P_0\longrightarrow G'_0 \longrightarrow A$ with $P_0\in\P$ and  $G'_0\in\QGP$. Similarly, we can obtain the other required $\xi$-exact complex
\end{proof}
\begin{cor}\label{ZXR}
 Let $G_1\longrightarrow G_0 \longrightarrow A\dashrightarrow$ be an $\Mbe$-triangle in $\xi$ with $G_0,G_1\in\QGP$, then there exist two $\Mbe$-triangles \[P_0\longrightarrow G'_0 \longrightarrow A\dashrightarrow \text{\quad and\quad} G'_1\longrightarrow P_1 \longrightarrow A\dashrightarrow \] 
 in $\xi$ with $P_0,P_1\in\P$ and  $G'_0,G'_1\in\QGP$. 
\end{cor}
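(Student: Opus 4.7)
The plan is to read this off directly from Lemma \ref{YMY} by specializing to the case $K=0$. Given the hypothesis $\Mbe$-triangle $G_1\longrightarrow G_0 \longrightarrow A\dashrightarrow$ in $\xi$ with $G_0,G_1\in\QGP$, I would form the four-term complex $0\longrightarrow G_1\longrightarrow G_0\longrightarrow A$ and check that it is a $\xi$-exact complex in the sense of the definition preceding \ref{X1}: its $\xi$-resolution $\Mbe$-triangles are the split triangle $0\longrightarrow G_1\stackrel{1}{\longrightarrow} G_1\dashrightarrow$ (in $\xi$ because $\Delta_0\subseteq\xi$) and the given triangle $G_1\longrightarrow G_0\longrightarrow A\dashrightarrow$ (in $\xi$ by hypothesis), and the composite factorizations $d_n=g_{n-1}f_n$ required by the definition are obvious.

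With this $K=0$ in hand, Lemma \ref{YMY} produces $\xi$-exact complexes $0\longrightarrow P_0\longrightarrow G_0'\longrightarrow A$ and $0\longrightarrow G_1'\longrightarrow P_1\longrightarrow A$ with $P_0,P_1\in\P$ and $G_0',G_1'\in\QGP$. Reading the $\xi$-resolution $\Mbe$-triangles of these complexes, the first triangles in each are again split (they degenerate to isomorphisms $P_0\simeq P_0$ and $G_1'\simeq G_1'$ because the leftmost term is $0$), while the remaining $\Mbe$-triangles are exactly $P_0\longrightarrow G_0'\longrightarrow A\dashrightarrow$ and $G_1'\longrightarrow P_1\longrightarrow A\dashrightarrow$ in $\xi$, as required.

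There is essentially no obstacle: the only thing one could worry about is that the construction inside the proof of Lemma \ref{YMY} might implicitly assume $K\neq 0$, but an inspection shows it only uses the decomposition $K_1\longrightarrow G_0\longrightarrow A\dashrightarrow$ together with base and cobase change along the resolution of $G_1$ and invocations of Theorem \ref{Dec31}; taking $K=0$ simply forces $K_1\simeq G_1$ and all the diagrams retain their validity. Hence the corollary is a genuine one-line consequence of the preceding lemma.
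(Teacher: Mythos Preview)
Your proposal is correct and matches the paper's intent: the corollary is stated immediately after Lemma \ref{YMY} with no proof, so the implicit argument is precisely the specialization $K=0$ that you carry out. Your check that the resulting $\xi$-exact complexes with $K=0$ yield genuine $\Mbe$-triangles in $\xi$ (via the degenerate split resolution triangle on the left) is the only point needing comment, and you handle it correctly.
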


\begin{thm}\label{HFY}
	Let $M\in  \widehat{\mathcal{QGP}}(\xi)$ and $n$ be a  positive integer. Then the following are equivalent:

	(1) $\Qgpd{M}\leq n$.
	
	(2) $\ext^{n+i}(M,Q)=0$ for $\forall i\geq 1$, $\forall Q\in\QP$.
	
	(3) For any integer $i$ with $0 \leq i \leq n$, there exists a $\xi$-exact complex
	\[
		\xymatrix{
			G_n\ar[r]&G_{n-1}\ar[r]&\cdots\ar[r]&G_2\ar[r]&G_1\ar[r]&G_0\ar[r]&M
	}
	\]
 such that $G_i\in\QGP$ and other $G_i\in\P$.

 Moreover, we have
 \[
\Qgpd{M}=\text{sup}\left\{m\in\mathbb{N}\ |\  \exists Q\in \QP \text{\ such that\ }\ext^m(M,Q)\neq 0 \right\}.
\]
\end{thm}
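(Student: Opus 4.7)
The plan is to establish $(1)\Leftrightarrow(2)$ as the cornerstone, note that $(3)\Rightarrow(1)$ is immediate from $\P\subseteq\QGP$, and derive $(1)\Rightarrow(3)$ by induction on $n$ via Corollary~\ref{ZXR}; the concluding formula is then a direct consequence of $(1)\Leftrightarrow(2)$.

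For $(1)\Rightarrow(2)$, fix a $\QGP$-resolution $G_n\to G_{n-1}\to\cdots\to G_0\to M$ with syzygy triangles $K_{k+1}\to G_k\to K_k\dashrightarrow$ in $\xi$ (so $K_0=M$ and $K_n=G_n$); applying $\ext^i(-,Q)$ for $Q\in\QP$ and using Lemma~\ref{TTTT} to kill $\ext^i(G_k,Q)$ for $i\geq 1$, the long exact sequences collapse to the dimension-shift $\ext^{n+i}(M,Q)\simeq\ext^i(K_n,Q)=0$ for all $i\geq 1$. For $(2)\Rightarrow(1)$, since $M\in\widehat{\QGP}$ set $m=\Qgpd M$ and assume $m>n$; fix a $\QGP$-resolution of length $m$ and prove by descending induction on $k=m-1,m-2,\ldots,n$ that $K_k\in\QGP$. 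At each step, Proposition~\ref{BYSJ} applied to $K_{k+1}\to G_k\to K_k\dashrightarrow$ (whose outer two terms are in $\QGP$ by the inductive hypothesis) reduces the claim to $\ext^1(K_k,Q)=0$ for all $Q\in\QP$, and dimension-shifting identifies this with $\ext^{k+1}(M,Q)$, which vanishes by (2) once $k+1\geq n+1$. The outcome $K_n\in\QGP$ exhibits a $\QGP$-resolution of length $n$.

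For $(1)\Rightarrow(3)$ I would induct on $n$; the base case $n=0$ is trivial. For the inductive step, use $(1)\Leftrightarrow(2)$ and dimension shifting along a $\xi$-projective deflation $P\to M$ with kernel $K$ to conclude $\Qgpd K\leq n-1$. When $1\leq i\leq n$, the induction hypothesis provides a $\xi$-exact complex $H_{n-1}\to\cdots\to H_0\to K$ with $H_{i-1}\in\QGP$ and the other terms in $\P$; splicing with $K\to P\to M\dashrightarrow$ yields the required complex for $M$ with the $\QGP$-term at position~$i$. For $i=0$ I would take the $n$-th syzygy $L_n$ of a $\xi$-projective resolution of $M$ (which lies in $\QGP$ by iterating the kernel argument), use Definition~\ref{X1} to produce a $\Mcc(-,\QP)$-exact $\xi$-projective coresolution of $L_n$, and splice this coresolution into the truncated resolution of $M$ via repeated applications of Lemma~\ref{BH}(2) and Corollary~\ref{ZXR} so as to slide the $\QGP$-term down to position~$0$; the resulting $\Mbe$-triangle $P\to G\to M\dashrightarrow$ with $P\in\P$ and $G\in\QGP$ is then padded above with identity maps on $P$ to reach the required length $n$.

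The principal obstacle is the $i=0$ subcase of $(1)\Rightarrow(3)$, the extriangulated analogue of Cohen--Macaulay (Gorenstein projective) approximation: the descent via Corollary~\ref{ZXR} is delicate because the corollary demands that both source and middle of the relevant triangle already lie in $\QGP$, a condition that fails at arbitrary intermediate syzygies of~$M$ once $\Qgpd M\geq 2$; orchestrating the construction therefore requires systematically injecting $\QGP$-structure at each descent step by splicing the bi-infinite $\Mcc(-,\QP)$-exact $\xi$-projective complex supplied by Definition~\ref{X1} into the resolution of $M$ through Lemma~\ref{BH}(2). Granting $(1)\Leftrightarrow(2)$, the closing formula $\Qgpd M=\sup\{m\in\mathbb{N}:\exists Q\in\QP,\ \ext^m(M,Q)\neq 0\}$ is automatic: if $\Qgpd M=n$ then (2) gives vanishing in all degrees $>n$, while the nonvanishing of $\ext^n(M,Q)$ for some $Q\in\QP$ is forced by the minimality of~$n$, as otherwise $(2)\Rightarrow(1)$ would yield $\Qgpd M\leq n-1$.
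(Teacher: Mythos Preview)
Your treatment of $(1)\Leftrightarrow(2)$ and $(3)\Rightarrow(1)$ matches the paper's, and your inductive step for $(1)\Rightarrow(3)$ in the range $1\leq i\leq n$ (take a $\xi$-projective deflation $K\to P\to M$, show $\Qgpd{K}\leq n-1$, apply induction to $K$ at position $i-1$, then splice) is a valid alternative to the paper's route, which instead starts from a $\QGP$-resolution and uses Lemma~\ref{YMY} to make the bottom term projective before invoking induction. Your version is arguably more direct, though you should justify $K\in\widehat{\QGP}$ explicitly (e.g.\ by taking $P$ to be the first term of a $\xi$-projective resolution whose $n$-th syzygy lies in $\QGP$).

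The genuine gap is your $i=0$ case. You assert that the sliding construction terminates in an $\Mbe$-triangle $P\to G\to M\dashrightarrow$ with $P\in\P$ and $G\in\QGP$, which you then ``pad above''. But such a triangle would witness $\Qgpd{M}\leq 1$, contradicting $\Qgpd{M}=n$ whenever $n\geq 2$. What the Auslander--Buchweitz descent actually produces is a triangle $K\to G\to M\dashrightarrow$ with $G\in\QGP$ and $\xi\text{-pd}(K)\leq n-1$; the object $K$ is not projective, and the required complex is obtained by prepending a $\P$-resolution of $K$, not by padding with identities. Your own description of the obstacle (``Corollary~\ref{ZXR} demands both source and middle already lie in $\QGP$'') is correct, but your stated endpoint is not.

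The paper sidesteps the delicate iterated descent entirely. It first takes any triangle $K\to G\to M\dashrightarrow$ with $G\in\QGP$ and $\Qgpd{K}\leq n-1$, applies the induction hypothesis to $K$ at position $0$ to obtain a $\xi$-exact complex $G'_n\to\cdots\to G'_2\to G'_1\to K$ with $G'_1\in\QGP$ and $G'_j\in\P$ for $j\geq 2$, splices to get $K_2\to G'_1\to G\to M$, and then invokes Lemma~\ref{YMY} \emph{once} to swap the adjacent pair $(G'_1,G)$ for $(P'_0,G'_0)$ with $P'_0\in\P$ and $G'_0\in\QGP$. This single-swap manoeuvre is the key idea you are missing; it replaces your open-ended sliding by a clean one-step application of an already-established lemma.
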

\begin{proof}
	$(1)\Rightarrow (2)$: Assume that  $\Qgpd{M}\leq n$, then there exists a $\xi$-exact  complex 
	\[
		\xymatrix{
			G_n\ar[r]&G_{n-1}\ar[r]&\cdots\ar[r]&G_2\ar[r]&G_1\ar[r]&G_0\ar[r]&M
	}
	\] 
with $G_i\in \QGP$  for  $0\leq i\leq n$ and  the $\xi$-resolution $\Mbe$-triangles $K_{j+1}\longrightarrow G_j \longrightarrow K_j\dashrightarrow$  (set $K_0=M, K_n=G_n$) for $0\leq j\leq n-1$. Let $Q$ be any object in $\QP$. By Lemma \ref{LZHL}, there is a long exact sequence
			\[
				\xymatrix@C=.5cm{
					\cdots\ar[r]&\ext^m(G_j,Q)\ar[r]& \ext^m(K_{j+1},Q)\ar[r]& \ext^{m+1}(K_j,Q)\ar[r]&\ext^{m+1}(G_j,Q)\ar[r]&\cdots
					}
			\] 
 where $\ext^m(G_j,Q)=\ext^{m+1}(G_j,Q)=0$ since Lemma \ref{TTTT}, so $\ext^m(K_{j+1},Q)\simeq \ext^{m+1}(K_j,Q)$, $m\geq 1$. By dimension shifting, for any $i\geqslant 1$, 
\[
	\ext^{n+i}(M,Q) \simeq \ext^i(K_{n},Q)=\ext^i(G_{n},Q)=0.
\]

$(2)\Rightarrow (1)$:  Assume that  $\ext^{n+i}(M,Q)=0$ for $\forall i\geq 1$, $\forall Q\in\QP$. Since $M\in  \widehat{\mathcal{QGP}}(\xi)$, there exists a $\xi$-exact complex
\[
		\xymatrix{
			G_m\ar[r]&G_{m-1}\ar[r]&\cdots\ar[r]&G_2\ar[r]&G_1\ar[r]&G_0\ar[r]&M
	}
	\] 
for some integer $m$, where $G_i\in\QGP$  and the $\xi$-resolution $\Mbe$-triangles are $K_{i+1}\longrightarrow G_i \longrightarrow K_i\dashrightarrow$  (set $K_0=M, K_m=G_m$) for any  $0\leq i\leq n-1$.  We only need to consider $m>n$. By the similar proof of ``only if''part, one can obtain that $\ext^i(K_{n+k},Q)\simeq \ext^{n+i+k}(M,Q)=0$ for any $i\geq 1$, $k\geq 0$. Applying Proposition \ref{BYSJ} on the $\Mbe$-triangle $G_m\rightarrow G_{m-1}\rightarrow K_{m-1}\dashrightarrow$ in $\xi$, implies that $K_{m-1}$ lies in $\QGP$. By repeating this on the other $\xi$-resolution $\Mbe$-triangles, one can deduce that $K_n\in\QGP$, and so $\Qgpd{M}\leq n$.

$(3)\Rightarrow (1)$: It is Obviously.

$(1)\Rightarrow (3)$: We  proceed by induction on $n$. Let $n=1$, the assertion is true by Corollary \ref{ZXR}. Now suppose that $n\geq 2$.  Then there exists a $\xi$-exact complex
\[
		\xymatrix{
			G^M_n\ar[r]&G^M_{n-1}\ar[r]&\cdots\ar[r]&G^M_2\ar[r]&G^M_1\ar[r]&G^M_0\ar[r]&M
	}
\]
with $G^M_i\in\QGP$  for any $0\leq i\leq n$ and the $\xi$-resolution $\Mbe$-triangles $K^M_{j+1}\longrightarrow G^M_j \longrightarrow K^M_j\dashrightarrow$  (set $K^M_0=M, K^M_n=G_n$) for any $0\leq j\leq n-1$. So there exists a $\xi$-exact complex 
\[
K^M_2\longrightarrow G^M_1 \longrightarrow G^M_0 \longrightarrow M.
\] 
It follows from Lemma \ref{YMY}, we get a $\xi$-exact complex $K^M_2\longrightarrow G'^M_1 \longrightarrow P^M_0 \longrightarrow M$ with $\xi$-resolution $\Mbe$-triangles 
\[
K^M_2\longrightarrow G'^M_1 \longrightarrow K'^M_1\dashrightarrow\text{\quad and\quad} K'^M_1\longrightarrow P^M_0 \longrightarrow M\dashrightarrow
\] 
where $G'^M_1\in\QGP$, $P^M_0\in\P$. So we get  a $\xi$-exact complex
\[
		\xymatrix{
			G^M_n\ar[r]&G^M_{n-1}\ar[r]&\cdots\ar[r]&G^M_2\ar[r]&G'^M_1\ar[r]&K'^M_1.
	}
\]
Note that $\Qgpd{K'^M_1}\leq n-1$, by the induction hypothesis, there exists a $\xi$-exact complex
\[
		\xymatrix{
			G_n\ar[r]&G_{n-1}\ar[r]&\cdots\ar[r]&G_2\ar[r]&G_1\ar[r]&K'^M_1
	}
\]
which one of $G_k\in\QGP$ for $ 1\leq k\leq n$ and other $G_i\in\P$.  Pasting the above $\xi$-exact complex and the $\Mbe$-triangle $K'^M_1\longrightarrow P^M_0 \longrightarrow M\dashrightarrow$ together, we obtain a $\xi$-exact complex
\[\xymatrix{
			G_n\ar[r]&G_{n-1}\ar[r]&\cdots\ar[r]&G_2\ar[r]&G_1\ar[r]&G_0\ar[r]&M,\quad(\text{set\ } G_0=P^M_0).
	}
\]
such that one of $G_k\in\QGP$ for $1\leq k \leq n$ and other $G_i\in\P$. 

Now we prove the case $k=0$. There exists an $\Mbe$-triangle $K\longrightarrow G \longrightarrow M\dashrightarrow$ in $\xi$ with $G\in\QGP$ and $\Qgpd{K}\leq n-1$ since $\Qgpd{M}\leq n$. By the induction hypothesis, there exists a $\xi$-exact complex
\[\xymatrix{
			G'_n\ar[r]&G'_{n-1}\ar[r]&\cdots\ar[r]&G'_2\ar[r]&G'_1\ar[r]&K
	}
\]
with $\xi$-resolution $\Mbe$-triangle $K_2\longrightarrow G'_1 \longrightarrow K\dashrightarrow$, where $G'_1\in\QGP$ and $G_i\in\P$ for all $2\leq i\leq n$. So there is  a $\xi$-exact complex $K_2\longrightarrow G'_1 \longrightarrow G\longrightarrow M$. By Lemma \ref{YMY}, we have a $\xi$-exact complex 
\[K_2 \longrightarrow P'_0 \longrightarrow G'_0 \longrightarrow M\]
 where $P'_0\in\P$ and $G'_0\in\QGP$.  Hence, there is a $\xi$-exact complex
\[\xymatrix{
			G'_n\ar[r]&G'_{n-1}\ar[r]&\cdots\ar[r]&G'_2\ar[r]&P'_0\ar[r]&G'_0\ar[r]&M
	}
\]
as desired.
\end{proof}

\begin{prop}
	Let $A,B\in\widehat{\mathcal{QGP}}(\xi)$ and  $A\longrightarrow B \longrightarrow C\dashrightarrow$ be an $\Mbe$-triangle in $\xi$ such that $A\neq 0$. If $C\in\QGP$, then $\Qgpd{A} = \Qgpd{B}$.
	\end{prop}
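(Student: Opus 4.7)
The plan is to reduce the equality $\Qgpd{A}=\Qgpd{B}$ to a comparison of $\xi$-cohomology groups against quasi-$\xi$-projective objects, and then invoke the homological characterization of finite quasi-$\xi$-$\mathcal{G}$projective dimension given by Theorem \ref{HFY}, together with the projectively resolving property of $\QGP$ (Theorem \ref{Dec31}) to cover the edge case.

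First I would use the hypothesis $C\in\QGP$ to invoke Lemma \ref{TTTT}, which yields $\ext^{i}(C,Q)=0$ for every $i\geq 1$ and every $Q\in\QP$. Then, for an arbitrary $Q\in\QP$, I would apply Lemma \ref{LZHL} to the given $\mathbb{E}$-triangle $A\longrightarrow B\longrightarrow C\dashrightarrow$ in $\xi$ to get the long exact sequence
\[\cdots\longrightarrow \ext^{i}(C,Q)\longrightarrow \ext^{i}(B,Q)\longrightarrow \ext^{i}(A,Q)\longrightarrow \ext^{i+1}(C,Q)\longrightarrow\cdots.\]
The vanishings obtained from Lemma \ref{TTTT} collapse this to natural isomorphisms $\ext^{i}(B,Q)\simeq \ext^{i}(A,Q)$ for every $i\geq 1$ and every $Q\in\QP$.

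Next I would split into two cases. If $\Qgpd{A}\geq 1$, then, since both $A$ and $B$ lie in $\widehat{\mathcal{QGP}}(\xi)$ by hypothesis, Theorem \ref{HFY} characterises $\Qgpd{A}$ and $\Qgpd{B}$ as the smallest $n\geq 1$ with $\ext^{n+i}(-,Q)=0$ for all $i\geq 1$ and $Q\in\QP$; the isomorphisms obtained above then force $\Qgpd{A}=\Qgpd{B}$. If instead $\Qgpd{A}=0$, then $A\in\QGP$, and since $C\in\QGP$ and $\QGP$ is $\xi$-projectively resolving (Theorem \ref{Dec31}), one concludes $B\in\QGP$, i.e.\ $\Qgpd{B}=0$. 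The reverse implication $\Qgpd{B}=0\Rightarrow \Qgpd{A}=0$ is completely symmetric, again from the projectively resolving property applied to $B,C\in\QGP$.

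Combining both cases gives $\Qgpd{A}=\Qgpd{B}$. The only real subtlety, and thus the main point where one must be careful, is that Theorem \ref{HFY} is stated for a \emph{positive} integer $n$; hence the value $0$ of $\Qgpd$ cannot be read off directly from Ext-vanishing and must be treated separately through the $\xi$-projectively resolving property of $\QGP$. Once this dichotomy is recognised, no serious technical obstacle remains, the assumption $A\neq 0$ being used only to ensure that $\Qgpd{A}=0$ is genuinely realised by membership in $\QGP$ rather than by the convention $\Qgpd{0}=-1$.
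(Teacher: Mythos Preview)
Your proposal is correct and follows essentially the same route as the paper: both arguments obtain the isomorphisms $\ext^{i}(A,Q)\simeq\ext^{i}(B,Q)$ for all $i\geq 1$ and $Q\in\QP$ from Lemma \ref{LZHL} together with Lemma \ref{TTTT}, then invoke Theorem \ref{HFY} for the positive-dimension case and Theorem \ref{Dec31} for the dimension-zero case. The only organisational difference is that the paper disposes of the case ``$A\in\QGP$ or $B\in\QGP$'' at the outset, whereas you postpone it; in your write-up be sure the reverse implication $\Qgpd{B}=0\Rightarrow\Qgpd{A}=0$ is established \emph{before} you appeal to Theorem \ref{HFY} for $B$, since otherwise the phrase ``the smallest $n\geq 1$'' is not yet justified for $B$.
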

	\begin{proof}
		The result is clear from Theorem \ref{Dec31} if one of the $A$ or $B$ is in $\QGP$. 
		
		Assume that $\Qgpd{A}= n\geq 1$, then $\ext^{n+i}(A,Q)=0$  for any $i\geq 1$ by Theorem \ref{HFY}.  It follows from Lemma \ref{LZHL} that there is a long exact sequence
		\[
			\xymatrix@C=.5cm{
				\cdots\ar[r]&\ext^m(C,Q)\ar[r]& \ext^m(B,Q)\ar[r]& \ext^m(A,Q)\ar[r]&\ext^{m+1}(C,Q)\ar[r]&\cdots
				}
		\] 
	where $\ext^m(C,Q)=0$  since Lemma \ref{TTTT}, and so $\ext^m(B,Q)\simeq \ext^m(A,Q)$ for any $m\geq 1$. Then we have  $\ext^{n+i}(B,Q)=0$  for any $i\geq 1$, which implies that $\Qgpd{B}\leq n$ by Theorem \ref{HFY}. If  $\Qgpd{B}\leq n-1$, then $\ext^{n+i}(A,Q)\simeq \ext^{n+i}(B,Q)=0$ for any $i\geq 0$. Hence, $\Qgpd{A}\leq n-1$ by Theorem \ref{HFY} again which  is inconsistent with the assumption. Therefore, we have $\Qgpd{B}=n$. Similarly, if $\Qgpd{B}= n\geq 1$, we can get  $\Qgpd{A}= n$.
	\end{proof}

Dually, we have  

\begin{thm}\label{HSB}
	Let $M\in  \widehat{\mathcal{QIP}}(\xi)$ and $n$ be a  positive integer. Then the following are equivalent:

	(1) $\Qgid{M}\leq n$.
	
	(2) $\ext^{n+i}(Q,M)=0$ for $\forall i\geq 1$, $\forall Q\in\QI$.
	
	(3) For any integer $i$ with $0 \leq i \leq n$, there exists a $\xi$-exact complex
	\[
		\xymatrix{
			M\ar[r]&G_0\ar[r]&G_{-1}\ar[r]&G_{-2}\ar[r]&\cdots\ar[r]&G_{-(n-1)}\ar[r]&G_{-n}
	}
	\]
 such that $G_{-i}\in\QGI$ and other $G_{-i}\in\I$.

 Moreover, we have \[\Qgid{M}=\text{sup}\left\{m\in\mathbb{N}\ |\  \exists Q\in \QI \text{\ such that\ }\ext^m(Q,M)\neq 0 \right\}.\]
\end{thm}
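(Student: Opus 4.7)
The strategy is to mirror the proof of Theorem \ref{HFY} step by step, using the duals of the lemmas invoked there. The main ingredients are: the dual of Lemma \ref{TTTT}, which asserts that $\ext^n(Q,G)=0$ for all $n\geq 1$ whenever $G\in\QGI$ and $Q\in\QI$; the dual of Proposition \ref{BYSJ}, which says that for an $\Mbe$-triangle $A\to B\to C\dashrightarrow$ in $\xi$ with $B,C\in\QGI$, one has $A\in\QGI$ if and only if $\ext^1(Q,A)=0$ for every $Q\in\QI$; the dual of Lemma \ref{YMY} / Corollary \ref{ZXR}; and the closure properties of $\QGI$ furnished by Theorem \ref{ZEH}. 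Each of these is obtained from its original counterpart by turning every diagram upside down, swapping $\rm(ET4)$ for $\rm(ET4)^{op}$ and Lemma \ref{BH}(1) for Lemma \ref{BH}(2), and replacing cobase change by base change.

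For $(1)\Rightarrow(2)$, assume $\Qgid{M}\leq n$ and fix a $\xi$-exact coresolution $M\to G_0\to G_{-1}\to\cdots\to G_{-n}$ with each $G_{-j}\in\QGI$, realized by $\xi$-resolution $\Mbe$-triangles $K_{-j}\to G_{-j}\to K_{-(j+1)}\dashrightarrow$ where $K_0=M$ and $K_{-n}=G_{-n}$. For any $Q\in\QI$, Lemma \ref{LZHL} combined with the dual of Lemma \ref{TTTT} yields shift isomorphisms $\ext^{m+1}(Q,K_{-j})\simeq \ext^m(Q,K_{-(j+1)})$ for $m\geq 1$. Iterating gives $\ext^{n+i}(Q,M)\simeq \ext^i(Q,G_{-n})=0$ for all $i\geq 1$.

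For $(2)\Rightarrow(1)$, since $M\in\widehat{\mathcal{QIP}}(\xi)$ choose a $\xi$-exact $\QGI$-coresolution of length $m\geq n$. The same dimension shift shows $\ext^i(Q,K_{-(n+k)})\simeq \ext^{n+i+k}(Q,M)=0$ for all $i\geq 1$, $k\geq 0$ and $Q\in\QI$. Starting from the bottom $\Mbe$-triangle $K_{-(m-1)}\to G_{-(m-1)}\to G_{-m}\dashrightarrow$ and repeatedly applying the dual of Proposition \ref{BYSJ}, every cosyzygy from $K_{-(m-1)}$ up to $K_{-n}$ must lie in $\QGI$, so $\Qgid{M}\leq n$. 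The implication $(3)\Rightarrow(1)$ is immediate from the definition of $\Qgid{-}$. For $(1)\Rightarrow(3)$ we copy the induction in Theorem \ref{HFY} using the dual of Lemma \ref{YMY}: given a $\xi$-exact piece $M\to G^0\to G^{-1}$ with $G^0,G^{-1}\in\QGI$ one produces alternative $\xi$-exact pieces $M\to I^0\to G'^{-1}$ and $M\to G'^0\to I^{-1}$ with $I^0,I^{-1}\in\I$ and $G'^0,G'^{-1}\in\QGI$, which lets us replace the entry at any prescribed position $i$ by a $\QGI$-object while turning the others into $\xi$-injectives. The supremum formula for $\Qgid{M}$ is read off directly from the equivalence $(1)\Leftrightarrow(2)$.

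The main obstacle is the careful production of the dual of Lemma \ref{YMY}: one must patiently flip each $\rm(ET4)$ application to $\rm(ET4)^{op}$, each Lemma \ref{BH}(1) diagram to a Lemma \ref{BH}(2) diagram, and each invocation of cobase-change closure to base-change closure, while keeping track of which cosyzygy lands in $\QGI$ via Theorem \ref{ZEH}. Once this dualization is written out, the induction on $n$ from Theorem \ref{HFY} transfers verbatim and the remainder of the argument is routine.
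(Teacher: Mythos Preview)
Your proposal is correct and matches the paper's approach: the paper states Theorem~\ref{HSB} immediately after the words ``Dually, we have'' and gives no separate proof, so its intended argument is exactly the dualization of Theorem~\ref{HFY} that you outline. Your identification of the needed dual ingredients (duals of Lemma~\ref{TTTT}, Proposition~\ref{BYSJ}, Lemma~\ref{YMY}/Corollary~\ref{ZXR}, and the closure properties from Theorem~\ref{ZEH}) is precisely what that dualization requires.
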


{\small

}
\end{document}